\documentclass[reqno,11pt]{article}

\usepackage{appendix}
\usepackage{amsfonts}
\usepackage{mathrsfs}
\usepackage{amsmath}
\usepackage{empheq}
\usepackage{mathtools}
\usepackage{graphicx}
\usepackage{graphicx}
\usepackage{booktabs}
\usepackage{amssymb}
\usepackage{cite}
\usepackage{epsfig}
\usepackage{epstopdf}
\usepackage[dvipsnames]{xcolor}
\usepackage{bm}
\usepackage{hyperref}
\usepackage{enumitem}
\usepackage{xspace}
\usepackage{microtype}
\usepackage{latexsym,amsmath,amssymb,amsthm,graphics,amscd,verbatim,mathrsfs}

\usepackage{algorithm}
\usepackage{algorithmicx}
\usepackage{algpseudocode}

\usepackage{amsmath}

\usepackage[margin=1.3in]{geometry}

\usepackage{array, tabularx, caption, boldline}
\usepackage{graphicx}
\usepackage{cellspace}

\DeclareMathOperator*{\argmax}{arg\;max}
\DeclareMathOperator*{\argmin}{arg\;min}

\DeclareMathOperator{\dist}{dist}

\DeclareMathOperator{\R}{\mathbb R}

\DeclareMathOperator{\Fix}{Fix}

\newcommand{\norm}[1]{\|#1\|}

\newcommand{\Exp}{\mathbb{E}}

\newcommand{\parti}[1]{\frac{\partial f}{\partial x_{#1}}}
\newcommand{\eg}{{\it e.g.}}
\newcommand{\ie}{{\it i.e.}}
\newcommand{\as}{\text{a.s.}}

\newcommand{\cF}{\mathcal{F}}
\newcommand{\prt}[1]{\left(#1\right)}
\newcommand{\srt}[1]{\left[#1\right]}
\def\B{S}

\newcommand{\cona}{c_1}
\newcommand{\conb}{c_2}
\newcommand{\conc}{c_3}
\newcommand{\range}{\mathrm{range}}

\theoremstyle{definition}

\newtheorem{assumption}{Assumption}

\newtheorem{theorem}{Theorem}
\newtheorem{lemma}{Lemma}
\newtheorem{proposition}{Proposition}

 \newtheorem{corollary}{Corollary}
\newtheorem{definition}{Definition}

\newtheorem{question}{Question}
\theoremstyle{definition}
\newtheorem{example}{Example}

\usepackage[labelformat=simple]{subcaption}

\definecolor{orangep}{RGB}{255,128,0}
\newcommand{\revise}[1]{#1}
\definecolor{redp}{RGB}{255,0,0}

\makeatletter
\def\@biblabel#1{#1.}
\renewcommand{\@seccntformat}[1]{\csname the#1\endcsname.\hspace{0.5em}}
\makeatother
\begin{document}
\title{An error bound-based convergence analysis framework\\ for a class of randomized algorithms}
\author{
Zhichun Yang\thanks{The Hong Kong Polytechnic University ({\tt zhichun.yang@polyu.edu.hk}).}
\and
Li Jiang\thanks{The Hong Kong Polytechnic University ({\tt jianglimath96@gmail.com}).}
\and
Tianxiang Liu\thanks{University of Tsukuba ({\tt liutx@sk.tsukuba.ac.jp})}
\and
Man-Chung Yue\thanks{The Hong Kong Polytechnic University ({\tt manchung.yue@polyu.edu.hk}).}
}

\date{}
\maketitle


\begin{abstract}
    \revise{
Classical analyses show that randomized coordinate descent (RCD) and gradient descent (GD) share the same convergence rates in terms of objective gap under convexity and specific global EB-type assumptions.
However, this rate preservation phenomenon does not extend to iterate rates, almost-sure rates, or general local error bound conditions. In this paper, we study this phenomenon systematically, not only for RCD but also for a much broader class of algorithms that share the same structures. Specifically, we introduce an abstract class of algorithms, called monotone randomized algorithms (MRAs), and new unified error bound (UEB) conditions that are independent of problem class and accommodate general EB moduli.
These UEB conditions subsume many EB- and Kurdyka--\L{}ojasiewicz-type assumptions used in the analysis of algorithms for optimization, convex feasibility, and common fixed point problems. We then develop a new analysis framework and a set of technical tools that yield nonasymptotic in-expectation rates and asymptotic almost-sure rates for both the optimality measure and the iterates under the global UEB condition. 
Under the local UEB condition, we further establish asymptotic almost-sure rates for both quantities. For comparison, we introduce the monotone deterministic algorithms (MDAs), as the corresponding deterministic counterpart of MRAs. We illustrate the tightness of our MRA analysis by showing that  the lower bounds of MDAs match the upper bounds of MRAs qualitatively. We also demonstrate the strength and versatility of our framework through three applications, namely generalized randomized subspace descent for unconstrained minimization, randomized block proximal gradient methods for composite optimization, and the randomized Krasnosel'ski\u{\i}--Mann method for common fixed point problems.
}
\end{abstract}

\section{Introduction}


In modern machine learning, randomized algorithms have emerged as the methods of choice for many real-world applications with high-dimensional decision variables, massive datasets, and/or a large number of constraints. Gaining a rigorous understanding of the convergence behavior of such algorithms is therefore a fundamental question with significant practical implications. The central goal of this paper is to systematically analyze a class of randomized algorithms satisfying certain structural properties under general notions of error bound conditions.


To motivate our study and highlight the challenges involved, let us begin by focusing on the randomized coordinate descent (RCD), a classical and widely used randomized algorithm for solving unconstrained smooth minimization problems. 
Owing to its simplicity and effectiveness in tackling high-dimensional problems, RCD and its variants have been extensively studied in the literature; see, for example,~\cite{Wright2015CoordinateDA, Cai2022CyclicBC, Xu2013ABC, Ptracu2013EfficientRC, Xu2017Globally, Nesterov2012Efficiency, Richtarik2014Iteration, Lu2015Complexity}.
\revise{For convex objectives, RCD is known to enjoy strong convergence guarantees~\cite{Nesterov2012Efficiency, beck2013convergence, Richtarik2014Iteration, Lu2015Complexity} in terms of the expected objective value comparable to those of deterministic gradient descent (GD): an $O(1/k)$ rate in the general convex case and a linear rate in the strongly convex case. Beyond convexity, linear convergence in expectation can still be obtained under a global Polyak--{\L}ojasiewicz (PL) condition~\cite{Ptracu2013EfficientRC,karimi2016linear}. These results suggest a rate-preservation phenomenon: at least in several special regimes, RCD preserves the qualitative convergence-rate behavior of its deterministic counterpart GD. However, the generality of this phenomenon remains unclear. Existing guarantees typically apply to expected objective values under global, specialized EB-type assumptions, leaving open whether comparable rates also hold for the iterates, along almost every trajectory, and under local EB geometry with general EB moduli. Almost-sure rates are practically relevant because they describe realized trajectories rather than average behaviour, while local error bounds with general moduli substantially broaden the theory beyond
specialized global geometries. This motivates the central question of our work: does the same rate-preservation principle persist in these broader settings? }

\revise{Compared with global error-bound conditions, analyzing RCD under local error-bound conditions is substantially more challenging. Such local assumptions are nevertheless important, since they hold for many more functions than their global counterparts; for example, local KL properties are satisfied by large classes of definable functions~\cite{Kurdyka1998OnGO,Bolte2007Clarke}. This direction has therefore attracted increasing attention in recent years.} For instance,~\cite{chouzenoux2016block, Xu2017Globally, latafat2022block, latafat2022bregman} establish convergence results based on local KL conditions under an essential cyclicity assumption, which requires that all coordinates be selected at least once within any fixed number of consecutive iterations. \revise{Nonetheless, as we show in Appendix~\ref{app:ess-cyclic}, this assumption fails almost surely under i.i.d. coordinate sampling, one of the most common sampling schemes in both practice and theory. Consequently, for genuinely randomized schemes such as i.i.d. sampling, sharp convergence rates for RCD under local KL geometry---and, in particular, whether they can match those of GD---remain largely open. 
}
At a deeper level, the convergence behaviour of RCD is tied to a small set of structural properties, namely, a decrease with respect to an optimality measure and certain inequalities relating the distance between consecutive iterates to a residual function.
These features are not unique to RCD but are shared by various randomized algorithms.
\revise{Notable examples include the randomized fast subspace descent (RFASD) methods \cite{chen2020randomized} and stochastic subspace descent \cite{kozak2021stochastic} for optimization problems, random projection method (RPM)~\cite{nedic2010} for convex feasibility problems and the random Krasnoselskii–Mann method (RKMM) for common fixed point problems.} With this observation, we endeavor to analyze a family of randomized algorithms in a unified manner under general EB or KL conditions. 

\revise{We begin by developing an abstract framework that captures the main structural properties of the algorithms of interest while remaining concrete enough for rate analysis. In particular, we will introduce an abstract class of stochastic algorithms, called monotone randomized algorithms (MRAs), which contains the examples discussed above as special cases but is not tied to any particular problem class. In parallel, we formulate unified global and local error-bound conditions with general EB moduli. These conditions are designed to be context-independent and flexible enough to encompass many commonly used EB- and KL-type assumptions arising in optimization, convex feasibility, and common fixed point problems. As a deterministic benchmark, we also introduce the corresponding monotone deterministic algorithms (MDAs). This allows the motivating question for RCD and GD to be lifted to an abstract level: under a comparable unified global or local EB geometry, does an MRA achieve the same convergence rate as its deterministic MDA counterpart? The main objective of this work is to answer this question systematically.}


We now summarize our main contributions.

\begin{itemize}
    \item \revise{We formulate an abstract class of stochastic algorithms defined by three inequalities relating two progress measures $h$ and $g$, called monotone randomized algorithms (MRAs), that} capture the key structural properties of several important methods, including RCD and its variants in optimization, RPM for convex feasibility problems, and RKMM for common fixed point problems. We introduce the unified global and local EB conditions for analyzing these algorithms. The proposed error bounds subsume many of EB- or KL-type conditions studied in the literature, including classical EB and KL conditions in unconstrained optimization\revise{~\cite{Kurdyka1998OnGO,Bolte2007Clarke}} and consistent error bounds for fixed point problems~\cite{liu2024convergence, Liu2024ConcreteCR}.

    \item \revise{We develop a new convergence rate analysis framework tailored to MRAs. Under global and local unified error-bound (UEB) conditions, this framework yields a unified set of convergence rates for both the progress measure $h$ and the iterates $x^k$. More precisely, under the global UEB condition, we establish nonasymptotic rates in expectation and asymptotic almost-sure rates; under the local UEB condition, we also obtain asymptotic almost-sure rates. Remarkably, in all cases covered by our theory, the rates of MRAs qualitatively match the worst-case lower bounds of the corresponding deterministic benchmark, namely the monotone deterministic algorithms (MDAs). This demonstrates both the sharpness of our analysis and the efficiency of MRA in a broad range of settings.}
    

    \item \revise{To demonstrate the versatility and strength of our framework, we apply our general results to generalized randomized subspace descent (GRSD) for unconstrained smooth minimization, randomized block proximal-gradient methods for smooth composite optimization, and RKMM for common fixed point problems. In particular, GRSD subsumes randomized subspace descent, block-coordinate descent, and coordinate descent. Although these results are obtained as special cases of our abstract analysis, they provide new convergence guarantees for all three classes of methods. }
    

    \item As a side result, Theorem~\ref{Th_KL_general} establishes that for any smooth definable function $f$ with a locally Lipschitz \revise{continuous} gradient, \revise{the modulus $\phi_{\bar{x}}=1/(\varphi')^2$ associated with a KL desingularization function $\varphi$ at any critical point $\bar{x}$ that is not a local maximizer satisfies $\phi_{\bar{x}}(t)=O(t)$ as $t\downarrow0$. In particular, this estimate implies that} $(\varphi')^2$ \revise{is not integrable} near the origin. An immediate consequence is that $f$ cannot satisfy the KL property with an exponent $\kappa \in [0,1/2)$ at any such point. This substantially generalizes existing results (\eg, \cite[Theorem~7]{bento2025convergence}), which are restricted to local minimizers and power-type desingularization functions.

\end{itemize}

\subsection{Notation}
The sets of real and non-negative real numbers are denoted by $\mathbb{R}$ and $\mathbb{R}_+$, respectively. The Euclidean norm is denoted by $\|\cdot\|$. For any integer $n>0$, we denote $[n] = \{1,\dots, n\}$. For any $\bar{x}\in \mathbb{R}^d$ and $\delta > 0$, $\mathbb{B}(\bar{x},\delta)$ denotes the open ball centered at $\bar{x}$ with radius $\delta$. For any set $B$, we denote by $\dist (x,B)$ the Euclidean distance from $x$ to the set $B$ and by $\mathbf{1}_B$ its indicator function, \ie, $\mathbf{1}_B (x) = 1$ if $x\in B$; and $\mathbf{1}_B (x) = 0$ otherwise. We also abbreviate ``almost surely'' as ``a.s.''.

\section{Problem Setup and Preliminaries}

\subsection{\revise{Monotone randomized and deterministic algorithms}}
\revise{Let $(\Omega,\cF,\{\mathcal{F}_k\}_{k\geq0},\mathbb{P})$ be a filtered probability space.} Let $\{x^k\}_{k\ge 0}\subseteq \mathbb{R}^d$ be a sequence of random
vectors \revise{adapted to $\{\mathcal{F}_k\}_{k\geq0}$}, with a
deterministic initial value $x^0$. For any random variable $X$ defined on the same probability space, we denote by $\mathbb{P}(X \mid \mathcal{F}_k)$ the regular version of the conditional probability distribution, \ie, $\mathbb{P}(X \mid \mathcal{F}_k)$ satisfies the definition of a conditional distribution, and for each $\omega \in \Omega$, the evaluation $\mathbb{P}(X \mid \mathcal{F}_k)(\omega)$ defines a probability distribution. The existence of a regular conditional distribution is guaranteed, \eg, by \cite[Page 269, Theorem 4]{shiryaev2016probability}.

This paper focuses on the following class of randomized algorithms.

\begin{assumption}[Monotone randomized algorithms (MRAs)]\label{assmp:basic}
There exist a closed set $\B \subseteq \mathbb{R}^d$, continuous functions $g,h : \B\to \R$ with $ g \ge 0$ and $ h^* := \inf_{x \in \B} h(x) > -\infty $, constants $ \cona, \conb, \conc >0$ with $\conc \revise{\ge} \conb$, and a positive sequence $\{\alpha_k\}_{k\ge 0} $ such that the following hold.
      \begin{enumerate}[label=(\roman*)]
        \item\label{basic_1} For all $k \ge 0$,
     \begin{equation}\label{eq:basic_1}
             h(x^{k+1})\le  h(x^k) - \frac{\cona}{\alpha_k} \|x^{k+1}-x^{k}\|^2,
     \end{equation}
     \begin{equation}
            \label{eq:expbound}
                \Exp[\|x^{k+1}-x^k\|^2|\cF_k]\ge  \conb \,\alpha_k^2\, g(x^k),\quad \text{a.s.},
        \end{equation} 
              \begin{equation}\label{eq:basic_4}
            \|x^{k+1}-x^k\|^2\le   \conc \, \alpha_k^2\,g(x^k).
        \end{equation}

        \item \label{basic_2} $\{x^k\}_{k\ge 0}\subseteq \B$.
    \end{enumerate}
\end{assumption}

In Assumption~\ref{assmp:basic}, the sequence $\{\alpha_k \}_{k\ge 0}$ plays the role of stepsizes \revise{(or rescaled stepsizes; see Sections~\ref{section_matrix_sketched_gd} and~\ref{section:smooth-random-bpg})} of the randomized algorithm, while the functions $g$ and $h$ represent two metrics for quantifying the progress of the algorithm. Our theoretical development will also rely on certain error bound conditions relating the metrics $g$ and $h$. The presence of the set $S$ is to allow the additional flexibility for accommodating the situation where the error bound condition holds only on a subset in $\mathbb{R}^d$. \revise{As a benchmark for assessing the effect of randomization, we introduce the deterministic counterpart of MRAs.}

\revise{\begin{assumption}[Monotone deterministic algorithms (MDAs)]\label{assmp:basic_det}
    The deterministic sequence $\{x^k\}_{k\ge 0}$ satisfies the conditions in
    Assumption~\ref{assmp:basic}, with the expected progress condition
    \eqref{eq:expbound} replaced by its pointwise deterministic counterpart:
    \begin{equation}\label{eq:det-bound}
        \|x^{k+1}-x^k\|^2
        \geq \conb\,\alpha_k^2\, g(x^k),
        \qquad \forall k\ge 0 .
    \end{equation}
\end{assumption}}

Below we illustrate our algorithmic framework with two representative examples.

\begin{example}\label{exam:SUM}
    Consider the unconstrained minimization problem
    \begin{equation}\label{pro_min}
        \min_{x\in \mathbb{R}^d}~f(x),
    \end{equation}
    where the objective function $f$ attains its minimum~$f^\star$ and has a Lipschitz continuous gradient. A popular randomized algorithm for solving problem~\eqref{pro_min} is the randomized coordinate descent (RCD):
    \begin{equation}\label{RCD}
        x^{k+1} = x^k - \revise{\beta_k} \, \parti{i_k} (x^k)\, e_{i_k} \quad\forall k \ge 0,
    \end{equation}
    where $\revise{\beta_k} > 0$ is the stepsize, $\{i_k\}_{k\ge 0}$ are independent and identically distributed (i.i.d.) random indices sampled according to the uniform distribution on $[d]$, and $e_i$ denotes the $i$-th standard basis in $\mathbb{R}^d$.
    Then, as we will see in Section~\ref{section_matrix_sketched_gd}, \revise{if each coordinate derivative $\parti{i}$ is $L_i$ Lipschitz continuous,} RCD satisfies Assumption~\ref{assmp:basic} with $h(x) = f(x)$, $g(x) = \|\nabla f(x)\|^2$ \revise{and $\beta_k = \frac{\alpha_k}{L_{i_k}}$}. \revise{In particular, standard gradient descent serves as the deterministic counterpart to RCD and satisfies Assumption~\ref{assmp:basic_det}.}
\end{example}

\begin{example}\label{exam:CFP}
     Consider the common fixed point problem
    \begin{equation}\label{CF-P}
        \begin{split}
            \text{find}~x\in F =\bigcap_{i=1}^m\Fix T_i, 
        \end{split}
    \end{equation}
    where each $T_i$ is a firmly quasi-nonexpansive operator on $\mathbb{R}^d$, and $\Fix T_i = \{x\in\mathbb{R}^d: T_i(x) = x\}$ is the set of its fixed points. A natural algorithm for solving problem~\eqref{CF-P} is the random Krasnoselskii–Mann method (RKMM): 
    \begin{equation}\label{alg:RKMM}
        x^{k+1} = x^k + \beta_k (T_{i_k} (x^k) - x^k)\quad\forall k\ge 0,
    \end{equation}
    where $\{\beta_k\}_{k\ge 0}$ are the stepsizes and $\{i_k\}_{k\ge 0}$ are independent and identically distributed (i.i.d.) random indices sampled according to the uniform distribution on $[m]$. Then, as we will prove in Section~\ref{section:RKMM}, \revise{if the operators $\{T_i\}_{i=1}^m$ are firmly quasi-nonexpansive}, RKMM satisfies Assumption~\ref{assmp:basic} with $h(x) = \dist^2(x,F)$, $g(x) = \max_{i\in [m]}\|x-T_i(x)\|^2$ \revise{and $\alpha_k = \beta_k$}. \revise{In particular, when $m=1$, RKMM coincides with the standard Krasnoselskii--Mann method (KMM), which serves as the deterministic counterpart and satisfies Assumption~\ref{assmp:basic_det}.}
\end{example}

\revise{
We impose the following condition on the stepsizes, which is satisfied by standard choices, such as constant stepsizes or $\alpha_k = c'/(k+1)^c$ with $c\in (0,1]$ and $c'>0$.
\begin{assumption}\label{assmp:step-size}
    For the sequence $\{\alpha_k\}_{k\geq0}$ defined in Assumption~\ref{assmp:basic}, there exists a constant $\bar{\alpha} > 0$ such that $0<\alpha_k \le \bar{\alpha}$ for all $k \ge 0$, and $\sum_{k=0}^\infty \alpha_k = +\infty$. 
\end{assumption}
}

The next lemma shows that the progress metric $g(x^k)$ vanishes almost surely.

\begin{lemma}\label{lem:g^k_vanishes}
    Suppose that Assumptions~\ref{assmp:basic}\ref{basic_1} and~\ref{assmp:step-size} hold and that $\Gamma\circ g$ is uniformly continuous for some strictly increasing function $\Gamma:\R_+\rightarrow \R$. Then, $g(x^k) \to 0$ almost surely.
\end{lemma}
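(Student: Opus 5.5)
The plan is to first extract an almost-sure summability property from Assumption~\ref{assmp:basic}\ref{basic_1}, and then upgrade it to full convergence with an upcrossing argument based on the uniform continuity of $\Gamma\circ g$.

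\textbf{Step 1 (summability).} By \eqref{eq:basic_1}, $\{h(x^k)\}$ is nonincreasing and bounded below by $h^*$. Telescoping gives the deterministic bound
\[
\sum_{k\ge0}\frac{\cona}{\alpha_k}\|x^{k+1}-x^k\|^2\le h(x^0)-h^*<\infty
\]
for every $\omega$. Taking expectations and using \eqref{eq:expbound} through the tower property, the quantity $\Exp\big[\|x^{k+1}-x^k\|^2/\alpha_k\big]$ is at least $\conb\,\Exp[\alpha_k g(x^k)]$. Monotone convergence then yields $\Exp\big[\sum_k \alpha_k g(x^k)\big]\le (h(x^0)-h^*)/(\cona\conb)$. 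Hence $\sum_k\alpha_k g(x^k)<\infty$ almost surely. Combined with $\sum_k\alpha_k=\infty$ from Assumption~\ref{assmp:step-size}, this gives $\liminf_k g(x^k)=0$ a.s.

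A second consequence of \eqref{eq:basic_1} will also be used. Since $\alpha_k\le\bar\alpha$, we have $\|x^{k+1}-x^k\|^2\le \bar\alpha\,(h(x^k)-h(x^{k+1}))/\cona\to0$. Moreover, the tails $\sum_{j\ge m}\|x^{j+1}-x^j\|^2/\alpha_j$ tend to $0$.

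\textbf{Step 2 (ruling out $\limsup>0$).} Fix an $\omega$ in the full-measure event where the conclusions of Step 1 hold. Suppose $\limsup_k g(x^k)>0$ along this path. Pick $0<\varepsilon_1<\varepsilon_2$ with $g(x^k)<\varepsilon_1$ infinitely often and $g(x^k)>\varepsilon_2$ infinitely often. Set $\eta=\Gamma(\varepsilon_2)-\Gamma(\varepsilon_1)>0$, which is positive since $\Gamma$ is strictly increasing, and choose $\delta>0$ from uniform continuity such that $\|x-y\|<\delta$ implies $|\Gamma(g(x))-\Gamma(g(y))|<\eta$.

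We then form infinitely many disjoint crossing intervals $[m_i,n_i]$ with the following properties: $g(x^{m_i})<\varepsilon_1$, $g(x^{n_i})>\varepsilon_2$, and $g(x^j)\ge\varepsilon_1$ for $m_i<j\le n_i$. On each such interval $\|x^{n_i}-x^{m_i}\|\ge\delta$. Since $\|x^{m_i+1}-x^{m_i}\|\to0$, for large $i$ we get $\sum_{j=m_i+1}^{n_i-1}\|x^{j+1}-x^j\|\ge\delta/2$.

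Cauchy--Schwarz then gives
\[
\frac{\delta^2}{4}\le\Big(\sum_{j=m_i+1}^{n_i-1}\alpha_j\Big)\Big(\sum_{j=m_i+1}^{n_i-1}\frac{\|x^{j+1}-x^j\|^2}{\alpha_j}\Big).
\]
The second factor tends to $0$ as $i\to\infty$ because it is bounded by a vanishing tail. Therefore $\sum_{j=m_i+1}^{n_i-1}\alpha_j\to\infty$. Since $g\ge\varepsilon_1$ on these indices, this forces $\sum_{j=m_i+1}^{n_i-1}\alpha_j g(x^j)\to\infty$, which contradicts $\sum_k\alpha_kg(x^k)<\infty$. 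Hence $g(x^k)\to0$ a.s.

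\textbf{Main obstacle.} The delicate point is Step 2, because $g$ is not assumed bounded. The bound \eqref{eq:basic_4} therefore cannot control the path length on a crossing interval. The trick is to bound the length instead via Cauchy--Schwarz against the summable weighted increments $\|x^{j+1}-x^j\|^2/\alpha_j$. A secondary point is handling the endpoint index $m_i$, where $g$ is small; this is resolved by the fact that individual steps vanish.
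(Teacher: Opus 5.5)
Your proof is correct and follows essentially the same route as the paper. Both arguments obtain almost-sure summability of $\sum_k \alpha_k g(x^k)$ from \eqref{eq:expbound} and the tower property, and then run a crossing-interval argument that combines the weighted Cauchy--Schwarz bound with the uniform continuity of $\Gamma\circ g$. The one difference is that the paper uses downcrossings, from $g\ge\epsilon$ to $g\le\epsilon/2$. There the starting index already has large $g$, so the stepsize sums $\gamma_j$ over the intervals are summable and hence $\gamma_j\to 0$. This removes the need for your separate observation that the single steps $\|x^{m_i+1}-x^{m_i}\|$ vanish.
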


\begin{proof}
    By inequalities~\eqref{eq:basic_1} and~\eqref{eq:expbound}, we have
    \[
    \sum_{k=0}^\infty \frac{1}{\alpha_k} \norm{x^{k+1}-x^k}^2 < +\infty \quad \text{and} \quad \sum_{k=0}^\infty \alpha_k\, \Exp[g(x^k)] < +\infty,
    \]
    which implies $\sum_{k=0}^\infty \alpha_k g(x^k) < +\infty$ almost surely. Define the event
    \[
    E = \left\{\omega: \sum_{k=0}^\infty \alpha_k g(x^k(\omega)) < +\infty \right\}.
    \]
    Then $\mathbb{P}(E)=1$. Fix any $\omega\in E$. Since $\sum_{k=1}^\infty \alpha_k = \infty$, we have $\liminf_{k\rightarrow\infty} g(x^k(\omega)) = 0$. Suppose that $\limsup_{k\rightarrow\infty} g(x^k(\omega)) > 0$. Then, there exists an $\epsilon>0$ and two index sequences $\{l_j\}_{j\ge 0} $ and $\{u_j\}_{j\ge 0}$ such that
    \[
    l_j < u_j < l_{j+1} \quad\forall j\geq 0,
    \]
    and that the corresponding subsequences $\{x^{l_j}\}_{j\ge 0}$ and $\{x^{u_j}\}_{j\ge 0}$ satisfy
    \[
    g(x^{l_j}(\omega)) \geq \epsilon,\quad   g(x^{u_j}(\omega))\leq \frac{\epsilon}{2}\quad \text{and}\quad g(x^{k}(\omega)) \geq \frac{\epsilon}{2}\quad \forall  l_j < k < u_j.
    \]
    Define $\gamma_j = \sum_{k=l_j}^{u_j-1} \alpha_k$. Since $\sum_{k=0}^{\infty} \alpha_kg(x^k)< +\infty$, $\gamma_j$ is summable. Then, as $j\rightarrow \infty$,
    \[
    \norm{x^{u_j}(\omega)-x^{l_j}(\omega)} \leq \sum_{k= l_j}^{u_j-1}  \norm{x^{k+1}(\omega)-x^{k}(\omega)} \leq
    {\sqrt{\gamma_j} \prt{\sum_{k= l_j}^{u_j-1} \frac{1}{\alpha_k} \norm{x^{k+1}(\omega)-x^{k}(\omega)}^2}^{\frac{1}{2}}\rightarrow 0.}
    \]
    On the other hand, by the uniform continuity of $\Gamma\circ g$, we have
    \[
    0<\Gamma(\epsilon)-\Gamma(\frac{\epsilon}{2})\leq \Gamma(g(x^{l_j}(\omega)))-\Gamma(g(x^{u_j}(\omega)))\rightarrow 0,\quad\text{as $j\rightarrow \infty$,}
    \]
    which yields a contradiction. Thus, $g(x^k(\omega)) \to 0$ for all $\omega \in E$.
\end{proof}

\subsection{Auxiliaries}\label{sec:Auxiliaries}
We next collect some preparatory results for our theoretical development.

\begin{definition}[Inverse smoothing and desingularization functions]\label{def:inverse-smoothing}
    Given any $\tau > 0$ and function $\phi: [0,\tau) \rightarrow \mathbb{R}_+$ that is \revise{non-decreasing and strictly positive on $(0,\tau)$}, we define the following functions.
    \begin{enumerate}[label=\textup{\textrm{(\roman*)}}]
        \item Fix a $t_0\in (0,\tau)$,\footnote{The results in this paper are independent of the choice of $t_0$.} define $\Psi:[0,\tau)\rightarrow \revise{\R\cup \{+\infty\}}$ such that $\Psi(t) = \int_{t}^{t_0} \frac{1}{\phi(s)} ds$.
        
        \item  If $1/\sqrt{\phi}$ is integrable near $0$,\footnote{Given a function $f:[0,\tau)\rightarrow [0,+\infty]$ with $\tau>0$, we say $f$ is integrable near $0$ if there exists a number $\tau' \in (0,\tau)$ such that $f$ is integrable on $[0, \tau']$.}define ${\Phi}:[0,\tau)\rightarrow \R_+$ such that $\Phi(t) = \int_{0}^t \frac{1}{\sqrt{\phi(s)}} ds$.
    \end{enumerate}
\end{definition}

The function $\Psi$ is inspired by the inverse smoothing function~\cite[Equation (4.3)]{liu2024convergence}, while the function $\Phi$ serves a similar purpose to the desingularization function in the Kurdyka–Łojasiewicz (KL) condition.

To reflect the relevant context, the notation $(\phi,\Psi,\Phi)$ will be augmented with subscripts in the following sections. Specifically, it appears as $(\phi_{\B},\Psi_{\B},\Phi_{\B})$ in Section~\ref{sec:globalEB}, and as $(\phi_{\mathcal{A}(\omega)},\Psi_{\mathcal{A}(\omega)},\Phi_{\mathcal{A}(\omega)})$ and $(\phi_{x^*(\omega)},\Psi_{x^*(\omega)},\Phi_{x^*(\omega)})$ in Section~\ref{sec:localEB}.

The lemma below summarizes the properties of $\Psi$ and $\Phi$, which is partly established in~\cite[Proposition 4.3]{liu2024convergence}. We provide a proof for self-containedness.
\begin{lemma}\label{lem:functions}
For the function $\phi$ defined in Definition~\ref{def:inverse-smoothing}, the following properties hold:
    \begin{enumerate}[label=\textup{\textrm{(\roman*)}}]
        \item\label{lem_func_1}
        The function $\Psi$ is convex and decreasing on $[0,\tau)$, and real-valued \revise{on $(0, \tau)$ with 
        \[
        -\frac{1}{\phi(t)}\in\partial \Psi(t) = \left[-\frac{1}{\phi(t-)},-\frac{1}{\phi(t+)}\right],
        \]
        where $\phi(t-) = \lim_{s\uparrow t} \phi(s)$ and $\phi(t+) = \lim_{s\downarrow t} \phi(s)$.} If $1/\phi$ is integrable near 0, then $\Psi(t) \uparrow \Psi(0) < +\infty$ as $t \downarrow 0$; otherwise, $\Psi(t) \uparrow \Psi(0) = +\infty$ as $t \downarrow 0$.
        \item\label{lem_func_2} If $1/\sqrt{\phi}$ is integrable near $0$, then $\Phi$ is continuous, concave, and increasing on $[0,\tau)$, with $\Phi(0) = 0$, and 
        \revise{\[
        -\frac{1}{\sqrt{\phi(t)}}\in \partial [-\Phi(t)] = \left[-\frac{1}{\sqrt{\phi(t-)}},-\frac{1}{\sqrt{\phi(t+)}}\right]
        \]}
        on $(0,\tau)$.
    \end{enumerate}
\end{lemma}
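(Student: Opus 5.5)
The plan is to derive everything from the monotonicity and positivity of $\phi$ on $(0,\tau)$, using elementary facts about integrals of monotone functions.

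\textbf{Part (i).} Fix $t\in(0,\tau)$. On the compact interval between $t$ and $t_0$, the function $\phi$ is non-decreasing and bounded below by $\phi(\min\{t,t_0\})>0$. Hence $1/\phi$ is bounded, monotone and Riemann integrable there, so $\Psi(t)$ is finite. Since $1/\phi$ is non-increasing and positive on $(0,\tau)$, we have $\Psi(t)=-\int_{t_0}^t \frac{1}{\phi(s)}ds$, and the derivative of the integral has a non-increasing integrand. Two consequences follow.
\begin{itemize}
\item $\Psi$ is strictly decreasing, because $1/\phi>0$.
\item $\Psi$ is convex on $(0,\tau)$, since $-\Psi$ is an antiderivative of a non-increasing function and is therefore concave. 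Concretely, for $s<t<u$ the chord slopes $\frac{\Psi(t)-\Psi(s)}{t-s}=-\frac{1}{t-s}\int_s^t\frac{1}{\phi}$ are non-decreasing in the interval.
\end{itemize}

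For the subdifferential, the one-sided derivatives of $\Psi$ at $t$ equal the one-sided limits of the integrand. Thus $\Psi'_-(t)=-1/\phi(t-)$ and $\Psi'_+(t)=-1/\phi(t+)$. This follows because $\frac{1}{h}\int_t^{t+h}\frac{1}{\phi}\to 1/\phi(t+)$, by monotonicity, and similarly from the left. For a convex function, $\partial\Psi(t)=[\Psi'_-(t),\Psi'_+(t)]$. Since $\phi(t-)\le\phi(t)\le\phi(t+)$, we get $-1/\phi(t)$ lies in this interval.

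At $0$, set $\Psi(0)=\lim_{t\downarrow0}\Psi(t)$, which exists in $(-\infty,+\infty]$ by monotonicity; equivalently, it is the (possibly improper) integral $\int_0^{t_0}1/\phi$. By monotone convergence, this is finite if and only if $1/\phi$ is integrable near $0$. If $\phi(0)=0$, then $1/\phi(0)=+\infty$ at a single point, which does not affect the integral. Convexity and monotonicity on $[0,\tau)$ with an extended value at $0$ then follow by taking limits in the convexity inequality, allowing $+\infty$.

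\textbf{Part (ii).} The argument is identical with $1/\sqrt{\phi}$ in place of $1/\phi$. Integrability near $0$ makes $\Phi(t)=\int_0^t\phi^{-1/2}$ finite, with $\Phi(0)=0$. Continuity holds by absolute continuity of the integral, including at $0$ by dominated or monotone convergence. $\Phi$ is increasing since the integrand is positive on $(0,\tau)$, and concave since the integrand is non-increasing. The one-sided derivatives are $1/\sqrt{\phi(t\pm)}$, so $\partial[-\Phi](t)=[-1/\sqrt{\phi(t-)},-1/\sqrt{\phi(t+)}]$, which contains $-1/\sqrt{\phi(t)}$.

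\textbf{Main obstacle.} The only delicate step is the identification of the subdifferential via one-sided derivatives, including the endpoint-limit bookkeeping. I would handle it with the standard fact that, for a convex function on an open interval, $\partial f(t)=[f'_-(t),f'_+(t)]$. The one-sided derivatives are then computed with monotone averages of the integrand.
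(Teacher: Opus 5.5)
Your proposal is correct and follows essentially the same route as the paper: you write $\Psi$ (resp.\ $\Phi$) as the integral of the monotone integrand $-1/\phi$ (resp.\ $1/\sqrt{\phi}$), then read off monotonicity, convexity/concavity, the one-sided derivatives $-1/\phi(t\pm)$, and the subdifferential interval containing $-1/\phi(t)$, and you use monotone convergence at $0$. The only difference is that the paper cites Rockafellar's Theorems~24.1--24.2 for the one-sided derivative and subdifferential facts, whereas you verify them directly via chord slopes and averages of the monotone integrand.
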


\begin{proof}
    We prove only assertion~\ref{lem_func_1} as assertion~\ref{lem_func_2} can be proved similarly. 
    Since the function $\phi$ is positive and non-decreasing, 
    \revise{the function $-1/\phi$ is negative and non-decreasing. Since
    \[
        \Psi(t)=\int_{t_0}^{t}-\frac{1}{\phi(s)}\,ds,
    \]
    it follows from~\cite[Theorems~24.1--24.2]{Rockafellar1970}
    that $\Psi$ is decreasing and convex, with
    \[
        \Psi'_-(t)=-\frac{1}{\phi(t-)},
        \qquad
        \Psi'_+(t)=-\frac{1}{\phi(t+)},
    \]
    and
    \[
        \partial\Psi(t)
        =
        [\Psi'_-(t),\Psi'_+(t)]
        =
        \left[
            -\frac{1}{\phi(t-)},
            -\frac{1}{\phi(t+)}
        \right].
    \]
    Since $\phi(t-)\leq\phi(t)\leq\phi(t+)$, it follows that
    $-1/\phi(t)\in\partial\Psi(t)$.}
    By the monotone convergence theorem, we have
    $\Psi(t)\uparrow\Psi(0)$ as $t\downarrow0$. The proof is completed by
    noting that $\Psi(0)<+\infty$ if and only if $1/\phi$ is integrable
    near zero.
\end{proof}


Lemma~\ref{lem:functions} implies that when $1/\phi$ is not integrable near zero, the range of $\Psi$, and thus the domain of $\Psi^{-1}$, is exactly $(\lim_{t \uparrow \tau} \Psi(t), +\infty]$. Therefore, for any $t \in (0, \tau)$, the interval $[\Psi(t), +\infty)$ lies entirely within the domain of $\Psi^{-1}$.

We will also use the following elementary result.
\begin{lemma}\label{lem:rate-preliminary}
For the function $\phi$ defined in Definition~\ref{def:inverse-smoothing}, the following properties hold:
    \begin{enumerate}[label=\textup{\textrm{(\roman*)}}]
        \item \label{rate-preliminary1} For any $a,\tilde{a}\in (0,\tau)$ and $b\geq 0$, if $\tilde{a}\leq a - b \phi(a)$, then 
        $\Psi(\tilde{a}) \geq \Psi(a) + b$. If $1/\phi$ is integrable near $0$, then this conclusion also holds for $\tilde{a} = 0$, $a\in (0,\tau)$.
        \item \label{rate-preliminary2} Suppose further that $1/\sqrt{\phi}$ is integrable near $0$. For any $a\in (0,\tau)$, $\tilde{a} \geq 0$ and $\Delta \ge 0$, if $\tilde{a}\leq a - \Delta \sqrt{\phi(a)}$, then $\Delta \leq \Phi(a) - \Phi(\tilde{a})$.
    \end{enumerate}
\end{lemma}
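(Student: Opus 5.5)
Both parts follow from the integral representations in Definition~\ref{def:inverse-smoothing} and the monotonicity of $\phi$. The idea is to compare the integrand on the interval $[\tilde a, a]$ with its value at the right endpoint $a$.

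\textbf{Part~\ref{rate-preliminary1}.} Suppose $\tilde a\le a-b\phi(a)$. Then $\tilde a\le a$, since $b\ge0$ and $\phi(a)>0$. If $b=0$, the claim $\Psi(\tilde a)\ge\Psi(a)$ follows at once because $\Psi$ is decreasing (Lemma~\ref{lem:functions}\ref{lem_func_1}). In general we write
\[
\Psi(\tilde a)-\Psi(a)=\int_{\tilde a}^{a}\frac{1}{\phi(s)}\,ds .
\]
Because $\phi$ is non-decreasing, $1/\phi(s)\ge 1/\phi(a)$ for every $s\in(\tilde a,a]$. Hence the integral is at least $(a-\tilde a)/\phi(a)\ge b$. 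For the boundary case $\tilde a=0$ with $1/\phi$ integrable near $0$, the value $\Psi(0)$ is finite by Lemma~\ref{lem:functions}\ref{lem_func_1}. The same integral identity then holds on $[0,a]$; the point $s=0$, where $\phi$ may vanish, has measure zero and does not affect the integral. So the same bound applies. Equivalently, one can invoke the subgradient inequality $\Psi(\tilde a)\ge\Psi(a)+(-1/\phi(a))(\tilde a-a)$ from the convexity in Lemma~\ref{lem:functions}\ref{lem_func_1}, extended to $\tilde a=0$ by lower semicontinuity and monotone convergence.

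\textbf{Part~\ref{rate-preliminary2}.} Here $\Phi(a)-\Phi(\tilde a)=\int_{\tilde a}^a \phi(s)^{-1/2}\,ds$ with $\tilde a\le a$. We may assume $\tilde a\le a$; otherwise $\Delta\sqrt{\phi(a)}<0$, which forces $\Delta=0$ and contradicts nothing. We should also make sure $\tilde a<\tau$, which holds since $\tilde a\le a<\tau$. Since $\phi(s)^{-1/2}\ge\phi(a)^{-1/2}$ on $[\tilde a,a]$, we get
\[
\Phi(a)-\Phi(\tilde a)\ge (a-\tilde a)/\sqrt{\phi(a)}\ge\Delta .
\]
Alternatively, this is the concavity (supergradient) inequality for $\Phi$ from Lemma~\ref{lem:functions}\ref{lem_func_2}. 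The case $\tilde a=0$ is covered directly because $\Phi$ is continuous with $\Phi(0)=0$.

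\textbf{Main obstacle.} There is no real obstacle here. The only care needed is at the endpoint $0$, where $\phi(0)$ may be $0$: there one must use integrability and monotone convergence rather than pointwise bounds on the integrand at $s=0$.
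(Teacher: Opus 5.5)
Your proof is correct and is essentially the paper's argument. The paper applies the subgradient inequality to the convex functions $\Psi$ and $-\Phi$, using the subgradients $-1/\phi(a)$ and $-1/\sqrt{\phi(a)}$ supplied by Lemma~\ref{lem:functions}; this is your comparison $1/\phi(s)\ge 1/\phi(a)$ on $(\tilde a,a]$ in packaged form. For $\tilde a=0$ the paper takes a limit using continuity of $\Psi$ at $0$, whereas you integrate over $[0,a]$ directly. One small slip: in part~\ref{rate-preliminary2} the case $\tilde a>a$ is simply impossible because $\Delta\sqrt{\phi(a)}\ge 0$. It does not ``force $\Delta=0$'', but this does not affect your argument.
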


\begin{proof}
    The recursion in~\ref{rate-preliminary1} can be reformulated as
    \[
    -\frac{1}{\phi(a)}(\tilde{a}-a) \geq b.
    \] 
    \revise{By Lemma~\ref{lem:functions}, $\Psi$ is convex and
    $-1/\phi(a)\in\partial\Psi(a)$. We therefore have}
    \[
        \Psi(\tilde{a})-\Psi(a)
        \geq -\frac{1}{\phi(a)}(\tilde{a}-a)
        \geq b.
    \]
    For the case where $1/\phi$ is integrable near $0$ and $\tilde{a}=0$, we use the continuity of $\Psi$ at $0$ established in Lemma~\ref{lem:functions}. Taking the limit as $\tilde{a}\downarrow 0$ in the preceding inequality yields the result. 
    
    As for the recursion in~\ref{rate-preliminary2}, \revise{Lemma~\ref{lem:functions} shows that $-\Phi$ is convex and
    $-1/\sqrt{\phi(a)}\in\partial[-\Phi](a)$. Hence, by the subgradient inequality,}
    \[
        \Delta
        \leq \frac{a-\tilde{a}}{\sqrt{\phi(a)}}
        \leq \Phi(a)-\Phi(\tilde{a}),
    \]
    which completes the proof.
\end{proof}

\revise{\section{Unified Error Bound Conditions}}
\revise{A central objective of this paper is to characterize the convergence behavior of MRAs defined in Assumption~\ref{assmp:basic} across a broad range of geometric regimes. Towards that end, we introduce global and local unified error-bound (UEB) conditions. The qualifier ``unified'' reflects two features. First, the conditions are formulated at the level of the abstract algorithmic framework and are therefore independent of any particular problem class or algorithmic instance. Second, inspired by nonlinear metric regularity~\cite{Ioffe2013Nonlinear} and nonlinear metric subregularity~\cite{kruger2016nonlinear, li2026generalized}, they encode different error-bound geometries through an abstract modulus function $\phi$, thereby allowing diverse convergence regimes to be treated within the same framework.
}
We first focus on the situation where the error bound condition holds globally over the domain $S$.

\begin{assumption}[Global unified error bound (GUEB)]\label{assmp:Global-EB}
    There exists a function $\phi_{\B}: [0,\tau) \rightarrow \mathbb{R}_+$, where \revise{$\tau \in (0,+\infty]$}, such that the following hold.
      \begin{enumerate}[label=(\roman*)]
        \item\label{ass_g_E_1} For any $x\in \B$,
        \begin{equation}\label{eq:globalEB}
        \phi_{\B}(h(x)-h^*) \leq g(x).
        \end{equation}
        \item\label{ass_g_E_2} The function $\phi_{\B}$ is \revise{non-decreasing and strictly positive on $(0,\tau)$ with $\phi_\B(0) = 0$}.
        \item\label{ass_g_E_3} $0\leq h(x)-h^*< \tau$ for all $x\in \B$.
    \end{enumerate}
\end{assumption}

Assumption~\ref{assmp:Global-EB} encompasses many existing EB- or KL-type conditions across various contexts.  
For instance, consider the smooth unconstrained minimization problem~\eqref{pro_min} in Example~\ref{exam:SUM} and recall the choice of $h(x) = f(x)$ and $g(x) = \|\nabla f(x)\|^2$. \revise{In this case, Assumption~\ref{assmp:Global-EB} is equivalent to a global KL condition~\cite{Bolte2017Errora,fontaine2021convergence,fatkhullin2022sharp} with desingularizing function $\Phi(t) = \int_{0}^t \frac{1}{\sqrt{\phi_\B(s)}} ds$. When $\phi_\B$ is a power-type modulus, Assumption~\ref{assmp:Global-EB} recovers global KL or generalized Polyak--{\L}ojasiewicz-type conditions with corresponding exponents~\cite{fatkhullin2022sharp,jiang2026generalizedversionchungslemma}. In particular, if $\phi_\B$ is linear, it further reduces to the standard Polyak--{\L}ojasiewicz condition~\cite{polyak1963gradient,karimi2016linear,NecoaraNesterovGlineur2019Linear,YueFangLin2023LowerBoundPL}.} Revisit the common fixed point problem~\eqref{CF-P} in Example~\ref{exam:CFP} and recall our choice of $h(x) = \dist^2(x,F)$ and $g(x) = \max_{i\in [m]}\|x-T_i(x)\|^2$. \revise{This setting also covers EB conditions for convex feasibility problems: when each $T_i$ is the projection onto a convex set $C_i$, one has $\Fix T_i=C_i$, and the goal is to find a point in the intersection $\bigcap_{i=1}^m C_i$. In this context, our GUEB condition coincides with the metric subregularity of the residual mapping $I-T$ for $0$ on $\B$ with gauge $\varphi$ \cite[Definition 2.5]{LukeThaoTam2018Quantitative}. Under certain growth assumptions on $\phi_\B$, Assumption~\ref{assmp:Global-EB} reduces to the joint Karamata regularity of the operators $\{T_i\}_{i=1}^m$~\cite{Liu2024ConcreteCR}. In the convex feasibility
setting, this specializes to the consistent error-bound condition
introduced in~\cite{liu2024convergence}. Choosing \(\phi_{\B}\) to be a power-type modulus further recovers H\"olderian error bounds~\cite{borwein2014analysis,borwein2017convergence,liu2024convergence,Liu2024ConcreteCR}. When \(\phi_{\B}\) is linear,
Assumption~\ref{assmp:Global-EB} reduces to linear regularity for
intersections of convex sets~\cite{BauschkeBorwein1993VonNeumann,
bauschke1996projection,
BauschkeBorweinLewis1997CyclicProjections,
BauschkeBorweinLi1999StrongCHIP,
BauschkeBorweinTseng2000BoundedLinearRegularity,
SongZang2006BoundedLinearRegularity,
Kruger2006RegularityCollectionsSets} and to its operator-theoretic counterpart for common fixed point problems~\cite{Cegielski2012IterativeMethods,
BauschkeNollPhan2015AveragedOperators,
borwein2017convergence,
CegielskiReichZalas2018RegularSequences,hermer2019random}.} 


\revise{\begin{theorem}[Existence of $\phi_\B$]\label{thm:GUEB-existence}
   Let $\B \subseteq \mathbb{R}^d$ be a compact set and $g,h : \B\to \R$ be two continuous functions with $ g \ge 0$ and $ h^* \coloneqq \inf_{x \in \B} h(x) > -\infty $. Suppose that $g(x) = 0$ implies $h(x) = h^*$. Then there exists a function $\phi_\B$: $ \mathbb{R}_+ \to\mathbb{R}_+$ such that the following statements hold.
   \begin{itemize}
       \item[(i)] For any $x\in\B$,
       \begin{equation*}
           \phi_\B(h(x) - h^*)\le g(x).
       \end{equation*}

       \item[(ii)] The function $\phi_\B$ is non-decreasing and strictly positive on $\mathbb{R}_{++}$ .
   \end{itemize}
\end{theorem}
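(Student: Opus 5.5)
The natural candidate is the lower envelope of $g$ over superlevel sets of the gap:
\[
\phi_\B(t) \coloneqq \inf\{\, g(x) : x\in\B,\ h(x)-h^*\ge t \,\},\qquad t\ge 0,
\]
with the convention $\inf\emptyset = +\infty$. Since $\phi_\B$ must be real-valued on $\mathbb{R}_+$, I will then truncate it. The steps are: check monotonicity and the error bound (i), prove strict positivity via compactness, and finally repair the values where the infimum is empty.

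\textbf{Monotonicity and (i).} If $t_1\le t_2$, the feasible set for $t_2$ is contained in that for $t_1$. Hence $\phi_\B$ is non-decreasing, including through the value $+\infty$. For any $x\in\B$, take $t=h(x)-h^*\ge0$. Then $x$ is feasible for this $t$, so $\phi_\B(h(x)-h^*)\le g(x)$, which is (i).

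\textbf{Strict positivity (the main step).} Fix $t>0$ and let $K_t=\{x\in\B: h(x)-h^*\ge t\}$.
\begin{itemize}
\item $K_t$ is a closed subset of the compact set $\B$, because $h$ is continuous; hence $K_t$ is compact.
\item If $K_t=\emptyset$, then $\phi_\B(t)=+\infty>0$.
\item If $K_t\neq\emptyset$, the continuous function $g$ attains its minimum on $K_t$ at some $\bar x$. If $g(\bar x)=0$, the hypothesis gives $h(\bar x)=h^*$, contradicting $h(\bar x)-h^*\ge t>0$. Hence $\phi_\B(t)=g(\bar x)>0$.
\end{itemize}
This is exactly where compactness and the hypothesis ``$g(x)=0\Rightarrow h(x)=h^*$'' are used. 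Without compactness the infimum could be $0$ along a sequence escaping to infinity.

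\textbf{Finiteness.} Let $M=\max_{\B} h - h^*<\infty$, which is finite by compactness. Then $K_t=\emptyset$ exactly when $t>M$, so $\phi_\B$ is finite on $[0,M]$ and $+\infty$ beyond. To obtain a map $\mathbb{R}_+\to\mathbb{R}_+$, replace $\phi_\B$ by
\[
\tilde\phi_\B(t)=\min\{\phi_\B(t),\,\Lambda\},\qquad \Lambda \coloneqq 1+\max_{\B} g.
\]
This truncation preserves all three properties:
\begin{itemize}
\item Monotonicity: the minimum of a non-decreasing function and a constant is non-decreasing.
\item Strict positivity: $\tilde\phi_\B(t)\ge\min\{\phi_\B(t),\Lambda\}>0$ for every $t>0$.
\item Inequality (i): for every $x\in\B$, $\tilde\phi_\B(h(x)-h^*)\le\phi_\B(h(x)-h^*)\le g(x)$.
\end{itemize}

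Finally, note that $\phi_\B(0)=\min_{\B} g$, which may be positive. This is harmless, since Theorem~\ref{thm:GUEB-existence} does not require $\phi_\B(0)=0$. If that normalization were desired, one could redefine the value at $0$ to be $0$; monotonicity and (i) both survive this change.
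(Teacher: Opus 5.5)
Your proof is correct and follows essentially the same route as the paper, which also takes $\phi_\B(t)$ to be the minimum of $g$ over the compact superlevel set $\{x\in\B : h(x)-h^*\ge t\}$, uses the zero-set hypothesis to get strict positivity, and extends beyond $\max_\B h - h^*$ by the constant endpoint value, where you instead truncate at $1+\max_\B g$. The only other difference is that the paper sets $\phi_\B(0)=0$ explicitly and treats $h\equiv h^*$ separately with $\phi_\B(t)=t$, matching the normalization later required in Assumption~\ref{assmp:Global-EB}, whereas you note this redefinition at $0$ as optional.
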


\begin{proof}
Replacing $h$ by $h-h^*$, we may assume without loss of generality that $h^*=0$, and hence $h\geq0$ on $\B$. Let \(\Delta\coloneqq\max_{x\in\B}h(x)\), which is finite since \(\B\) is compact and \(h\) is continuous. If \(\Delta=0\), then \(h\equiv0\) on \(\B\), and the choice
\(\phi_\B(t)=t\) for all \(t\in\mathbb R_+\) satisfies the conclusions. Suppose that \(\Delta>0\). For \(t\in(0,\Delta]\), define
\[
    \Omega_t\coloneqq\{x\in\B:h(x)\ge t\},
    \qquad
    \widehat{\phi}_\B(t)\coloneqq\min_{x\in\Omega_t}g(x).
\]
The set \(\Omega_t\) is nonempty and compact, so
\(\widehat{\phi}_\B(t)\) is well defined. Moreover, if
\(0<t_1\le t_2\le\Delta\), then \(\Omega_{t_2}\subseteq\Omega_{t_1}\), and
hence \(\widehat{\phi}_\B(t_1)\le\widehat{\phi}_\B(t_2)\). Thus
\(\widehat{\phi}_\B\) is non-decreasing. For every \(t\in(0,\Delta]\), one also has
\(\widehat{\phi}_\B(t)>0\). Indeed, \(h(x)\ge t>0\) implies \(g(x)>0\) by
the assumption \(g(x)=0\Rightarrow h(x)=0\), and therefore the continuous
function \(g\) has a strictly positive minimum on the compact set
\(\Omega_t\). Define \(\phi_\B:\mathbb R_+\to\mathbb R_+\) by
\[
    \phi_\B(t)\coloneqq
    \begin{cases}
        0, & t=0,\\
        \widehat{\phi}_\B(t), & 0<t\le\Delta,\\
        \widehat{\phi}_\B(\Delta), & t>\Delta.
    \end{cases}
\]
Then \(\phi_\B\) is non-decreasing on \(\mathbb R_+\) and strictly positive
on \(\mathbb R_{++}\), proving \textup{(ii)}. Finally, fix \(x\in\B\). If \(h(x)=0\), then
\(\phi_\B(h(x))=0\le g(x)\). If \(h(x)>0\), then
\(x\in\Omega_{h(x)}\), and hence $\phi_\B(h(x))
    =
    \widehat{\phi}_\B(h(x))
    \le g(x)$.
This proves \textup{(i)}.
\end{proof}}

\revise{
Theorem~\ref{thm:GUEB-existence} is closely related in spirit to the consistent error-bound framework of~\cite{liu2024convergence}, which is developed for convex feasibility problems, with $h$ being the distance to the feasible set and $g$ being a feasibility residual.
Theorem~\ref{thm:GUEB-existence} generalizes the construction in~\cite{liu2024convergence} to a general pair $(h,g)$ satisfying the zero-set condition.


}


\revise{We next introduce a local version of the unified error-bound condition. More precisely, for every $\bar{x}\in F$, the error bound is assumed to hold on a neighborhood of $\bar{x}$ intersected with $S$, with an error-bound modulus that may depend on the base point~$\bar{x}$.} We denote by $F = g^{-1}(\{0\})\cap \B$ the target set. 


\begin{assumption}[Local error bound]\label{assmp:local-EB}
    For any $\bar{x}\in F:= g^{-1}(\{0\})\cap \B$, there exists a function $\phi_{\bar{x}}: [0,\eta_{\bar{x}}) \rightarrow \mathbb{R}_+$, where  $\eta_{\bar{x}} > 0$ is some constant, such that the following hold. 
    \begin{enumerate}[label=\textup{\textrm{(\roman*)}}]
        \item\label{ass_l_EB_1} There exist $\delta_{\bar{x}}>0$ such that
        \[
        \phi_{\bar{x}}(h(x)-h(\bar{x})) \leq g(x) \quad \forall x\in \{x\in\mathbb{R}^d :h(\bar{x})<h(x)<h(\bar{x})+\eta_{\bar{x}}\}\cap \mathbb{B}(\bar{x},\delta_{\bar{x}})\cap \B.
        \]
        \item\label{ass_l_EB_2} The function $\phi_{\bar{x}}$ is \revise{non-decreasing and strictly positive on $(0,\eta_{\bar{x}})$ with $\phi_{\bar x}(0) = 0$.}
    \end{enumerate}
\end{assumption}

\revise{
The existence of a local modulus $\phi_{\bar{x}}$ can be established by the same argument as in Theorem~\ref{thm:GUEB-existence}, provided that, in a neighborhood of $\bar{x}$, $h(x)=h(\bar{x})$ whenever $g(x)=0$.
}

\revise{For the smooth unconstrained minimization problem in Example~\ref{exam:SUM}, Assumption~\ref{assmp:local-EB} reduces, at each critical point $\bar{x}\in F$, to a local KL-type condition whose desingularizing function is determined by the local modulus $\phi_{\bar{x}}$. This condition applies to a broad class of objectives. Notably, functions definable in an o-minimal structure are known to satisfy the KL property~\cite{Kurdyka1998OnGO,Bolte2007Clarke,BolteDaniilidisLewis2007Lojasiewicz,Absil2005Convergence,Attouch2010Proximal,Bolte2014Proximal}. When the underlying o-minimal structure is polynomially bounded, the desingularizing function can moreover be chosen to be of power type, yielding a KL exponent~\cite{Kurdyka1998OnGO,Loi2016Lojasiewicz}. This setting includes, in particular, semialgebraic and globally subanalytic functions. 
In the common fixed point setting of Example~\ref{exam:CFP}, Assumption~\ref{assmp:local-EB} instead specializes to the metric $\varphi$-subregularity \cite{kruger2016nonlinear} of the residual mapping $I-T$ at $(\bar{x},0)$, where $\bar{x}\in \Fix T$, $\phi_{\bar x}(\cdot)=(\varphi^{-1}(\sqrt{\cdot}))^2$. When \(\phi_{\bar{x}}\) is a power-type modulus, this reduces to
H\"older metric subregularity of \(I-T\) at
$(\bar{x},0)$~\cite{LiMordukhovich2012Holder,MordukhovichOuyang2015HigherOrderMetricSubregularity,Kruger2015ErrorBA}, where $\bar{x}\in \Fix T$. When $\phi_{\bar x}$ is linear, it further reduces to the standard metric subregularity~\cite{ApetriiDureaStrugariu2013Subregularity,
DontchevRockafellar2004RegularityConditioning,
DontchevRockafellar2009ImplicitFunctions,
Gfrerer2011FirstSecondOrder,
Gfrerer2013DirectionalMetricRegularity,
Gfrerer2013DirectionalMetricSubregularity,
HenrionJourani2002CalmnessConvexConstraints,
HenrionJouraniOutrata2002CalmnessMultifunctions,
HenrionOutrata2001SubdifferentialCalmness,
IoffeOutrata2008MetricCalmnessQualification,
Leventhal2009MetricSubregularityPPA,
NgaiTinh2015MetricSubregularityMultifunctions,
Penot2010ErrorBoundsCalmness,
ZhengNg2007MetricSubregularityConvexGE,
ZhengNg2010MetricSubregularityCalmnessNonconvexGE,
ZhengNg2012MetricSubregularityProximalGE,
ZhengOuyang2011MetricSubregularityCompositeConvexGE,liang2016convergence,LukeTeboulleThao2020Necessary}, which, for convex feasibility problems, is precisely subtransversality of the collection at \(\bar{x}\in C\)~\cite{kruger2017subtransversality,kruger2018set}.}

In the following lemma, we show that the local error bound condition in Assumption~\ref{assmp:local-EB} can be uniformized in the sense of \cite[Lemma~6]{Bolte2014Proximal}.

\begin{lemma}[Uniformized error bound]\label{lem:uniformization}
    Suppose that Assumption~\ref{assmp:local-EB} holds. Define $V = h(F)$. Then, for any $v\in h(F)$ and compact set $C\subseteq h^{-1}(v)\cap F$, there exist $\delta_{C},\eta_{C} > 0$ and a finite set $\{\bar{x}_j\}_{j=1,\dots,l}\subseteq C$ such that
        \[
        \phi_{C}(h(x)-v) \leq g(x) \quad \forall x\in \{x\in\mathbb{R}^d :v<h(x)<v+\eta_{C},~\dist(x,C)<\delta_C\}\cap \B,
        \]
    where $\phi_C: [0,\eta_C) \to \mathbb{R}_+ $ is defined by $ \phi_C = \min_{j=1,\dots,l} \phi_{\bar{x}_j}$ with $\phi_{\bar{x}}$ defined in Assumption~\ref{assmp:local-EB}. 
\end{lemma}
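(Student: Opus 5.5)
The argument is the standard compactness covering from \cite[Lemma~6]{Bolte2014Proximal}. For each $\bar{x}\in C$, Assumption~\ref{assmp:local-EB} provides $\eta_{\bar{x}}>0$, $\delta_{\bar{x}}>0$ and a modulus $\phi_{\bar{x}}$. Since $C\subseteq h^{-1}(v)$, we have $h(\bar{x})=v$ for every $\bar{x}\in C$. The local inequality at $\bar{x}$ therefore reads
\[
\phi_{\bar{x}}(h(x)-v)\le g(x)\quad\text{for all } x\in\B\cap\mathbb{B}(\bar{x},\delta_{\bar{x}}) \text{ with } v<h(x)<v+\eta_{\bar{x}}.
\]
The open balls $\mathbb{B}(\bar{x},\delta_{\bar{x}}/2)$, $\bar{x}\in C$, cover the compact set $C$. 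Extract a finite subcover indexed by $\bar{x}_1,\dots,\bar{x}_l\in C$.

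Next fix the uniform constants. Set $\eta_C=\min_j \eta_{\bar{x}_j}>0$ and $\delta_C=\min_j\delta_{\bar{x}_j}/2>0$, and define $\phi_C=\min_j\phi_{\bar{x}_j}$ restricted to $[0,\eta_C)$. This is well defined because every $\phi_{\bar{x}_j}$ is defined on $[0,\eta_{\bar{x}_j})\supseteq[0,\eta_C)$. As a minimum of finitely many functions, $\phi_C$ also inherits the properties in Assumption~\ref{assmp:local-EB}\ref{ass_l_EB_2}: it is non-decreasing, strictly positive on $(0,\eta_C)$, and satisfies $\phi_C(0)=0$. The lemma does not explicitly ask for these properties, but they will be needed later.

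Now verify the inequality. Take $x\in\B$ with $v<h(x)<v+\eta_C$ and $\dist(x,C)<\delta_C$. Choose $y\in C$ with $\|x-y\|<\delta_C$; such a $y$ exists, and the infimum defining $\dist(x,C)$ is attained since $C$ is compact. The point $y$ lies in some ball $\mathbb{B}(\bar{x}_j,\delta_{\bar{x}_j}/2)$ of the subcover. The triangle inequality then gives
\[
\|x-\bar{x}_j\|<\delta_C+\delta_{\bar{x}_j}/2\le\delta_{\bar{x}_j}.
\]
Moreover $v<h(x)<v+\eta_C\le v+\eta_{\bar{x}_j}$. Hence $x$ lies in the region where the local error bound at $\bar{x}_j$ holds, and we obtain
\[
\phi_C(h(x)-v)\le\phi_{\bar{x}_j}(h(x)-v)\le g(x).
\]

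The only delicate point is the radius bookkeeping. Covering with half-radii is what ensures that every point within $\delta_C$ of $C$ lands inside a full ball $\mathbb{B}(\bar{x}_j,\delta_{\bar{x}_j})$ where the local bound applies. Everything else is routine.
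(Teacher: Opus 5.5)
Your proof is correct and uses the same finite-covering argument as the paper, with $\eta_C=\min_j\eta_{\bar{x}_j}$ and $\phi_C=\min_j\phi_{\bar{x}_j}$ chosen identically. The only difference is that the paper just asserts that some $\delta_C>0$ makes the $\delta_C$-neighborhood of $C$ lie inside the union of the balls $\mathbb{B}(\bar{x}_j,\delta_{\bar{x}_j})$, whereas your half-radius cover with $\delta_C=\min_j\delta_{\bar{x}_j}/2$ makes this step explicit.
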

\begin{proof}
    By compactness, $C$ can be covered by a finite number of open balls with centers $\bar{x}_1, \dots, \bar{x}_l \in  C$ and the corresponding radii $\delta_{\bar{x}_1},\dots, \delta_{\bar{x}_l}$ given in Assumption~\ref{assmp:local-EB}\ref{ass_l_EB_1}. Hence, there exists a constant $\delta_C>0$ such that the $\delta_C$-extension $\{x\in \mathbb{R}^d :\dist(x,C)<\delta_C\}$ can be covered by the same collection of open balls. The proof is then completed by taking $\eta_{C} = \min_{j=1,\dots,l} \eta_{\bar{x}_j}$, and $\phi_C = \min_{j=1,\dots,l} \phi_{\bar{x}_j}$.
\end{proof}

\section{\revise{Deterministic Benchmark Rates}}
\label{subsec:mda-benchmark}
\revise{
Our next task is to establish upper and lower bounds on the convergence rates of MDAs, which serve as a natural benchmark for the convergence rates of MRAs.


\subsection{Upper bounds under global UEB}

From now on, for simplicity, we denote the stepsize partial sum as
\[
    A_k:=\sum_{i=0}^{k-1}\alpha_i.
\]
The following result gives the deterministic benchmark used throughout the
paper.

\begin{theorem}[Convergence rates of MDA under GUEB]
\label{thm:mda-gueb-benchmark}
Suppose that Assumptions~\ref{assmp:basic_det},
\ref{assmp:step-size}, and~\ref{assmp:Global-EB} hold, and that $h(x^0)-h^*<\tau$.  Fix any $t_0\in(0,\tau)$ in Definition~\ref{def:inverse-smoothing}.  If $h(x^0)=h^*$, then the trajectory
is stationary.  Otherwise, the following statements hold.

\begin{enumerate}[label=\textup{\textrm{(\roman*)}}]
    \item If $1/\phi_{\B}$ is integrable near $0$, then there exists $K<\infty$
    such that
    \[
        x^k=x^K\in\argmin_{x\in\B}h(x),
        \qquad k\ge K.
    \]

    \item If $1/\phi_{\B}$ is not integrable near $0$, then, for every $k\ge0$,
    \begin{equation}\label{eq:mda-gueb-h-upper}
        h(x^k)-h^*
        \le
        \Psi_{\B}^{-1}\!\left(
            \Psi_{\B}(h(x^0)-h^*)+\cona\conb A_k
        \right).
    \end{equation}

    \item If $1/\phi_{\B}$ is not integrable near $0$ and
    $1/\sqrt{\phi_{\B}}$ is integrable near $0$, then $\{x^k\}$ converges to
    some $x^*\in\argmin_{x\in\B}h(x)$ and, for every $k\ge0$,
    \begin{equation}\label{eq:mda-gueb-tail-upper}
        \sum_{j=k}^{\infty}\|x^{j+1}-x^j\|
        \le
        \frac{1}{\cona\sqrt{\conb}}\,
        \Phi_{\B}(h(x^k)-h^*).
    \end{equation}
    Consequently,
    \begin{equation}\label{eq:mda-gueb-x-upper}
        \|x^k-x^*\|
        \le
        \frac{1}{\cona\sqrt{\conb}}\,
        \Phi_{\B}\!\left(
            \Psi_{\B}^{-1}\!\left(
                \Psi_{\B}(h(x^0)-h^*)+\cona\conb A_k
            \right)
        \right).
    \end{equation}
\end{enumerate}
\end{theorem}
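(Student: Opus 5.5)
Write $r_k := h(x^k)-h^*$. The plan is to combine \eqref{eq:basic_1} and \eqref{eq:det-bound} with the GUEB inequality \eqref{eq:globalEB} into a one-step recursion, and then to apply Lemma~\ref{lem:rate-preliminary}.

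\textbf{The basic recursion.} For every $k$,
\[
r_{k+1}\le r_k-\frac{\cona}{\alpha_k}\|x^{k+1}-x^k\|^2\le r_k-\cona\conb\,\alpha_k\,g(x^k)\le r_k-\cona\conb\,\alpha_k\,\phi_{\B}(r_k).
\]
The third inequality uses \eqref{eq:globalEB}, which requires $x^k\in\B$ (Assumption~\ref{assmp:basic}\ref{basic_2}) and $0\le r_k<\tau$; the latter holds because $r_k$ is non-increasing and $r_0<\tau$.

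\textbf{Stationarity when $r_0=0$.} Here $r_{k}=0$ for all $k$, so \eqref{eq:basic_1} forces $x^{k+1}=x^k$.

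\textbf{Part (i).} Assume $r_0>0$ and $1/\phi_\B$ is integrable near $0$. Suppose for contradiction that $r_k>0$ for all $k$. Lemma~\ref{lem:rate-preliminary}\ref{rate-preliminary1}, applied with $b=\cona\conb\alpha_k$, gives $\Psi_\B(r_{k+1})\ge\Psi_\B(r_k)+\cona\conb\alpha_k$. Summing yields $\Psi_\B(r_k)\ge\Psi_\B(r_0)+\cona\conb A_k\to\infty$ by Assumption~\ref{assmp:step-size}. This contradicts $\Psi_\B\le\Psi_\B(0)<\infty$ (Lemma~\ref{lem:functions}). Hence some $K$ has $r_K=0$, and once $r_K=0$ the iterates are stationary from $K$ on, as before. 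Since $r_K=0$ means $h(x^K)=h^*$, we get $x^K\in\argmin_\B h$.

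\textbf{Part (ii).} Assume $1/\phi_\B$ is not integrable. If some $r_k=0$, the bound is trivial, since the right-hand side of \eqref{eq:mda-gueb-h-upper} is nonnegative. Otherwise every $r_k\in(0,\tau)$, and the telescoped inequality $\Psi_\B(r_k)\ge\Psi_\B(r_0)+\cona\conb A_k$ holds. Applying the decreasing map $\Psi_\B^{-1}$, whose domain contains $[\Psi_\B(r_0),\infty)$ by the remark after Lemma~\ref{lem:functions}, gives \eqref{eq:mda-gueb-h-upper}. If instead $r_K=0$ for some $K$, then for $k\le K$ the bound follows from the telescoped inequality, since $r_k>0$ for $k<K$ by monotonicity. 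For $k\ge K$ it is trivial.

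\textbf{Part (iii): the tail bound.} This is the main step, handled KL-style. Set $d_k=\|x^{k+1}-x^k\|$ and assume $r_k>0$ (if $r_k=0$, all later steps vanish). From \eqref{eq:det-bound} and GUEB, $\alpha_k\le d_k/\sqrt{\conb\,\phi_\B(r_k)}$. Then
\[
r_{k+1}\le r_k-\frac{\cona}{\alpha_k}d_k^2\le r_k-\cona\sqrt{\conb}\,d_k\sqrt{\phi_\B(r_k)}.
\]
Lemma~\ref{lem:rate-preliminary}\ref{rate-preliminary2} with $\Delta=\cona\sqrt{\conb}\,d_k$ and $\tilde a=r_{k+1}\ge0$ yields
\[
\cona\sqrt{\conb}\,d_k\le\Phi_\B(r_k)-\Phi_\B(r_{k+1}).
\]
Summing from $k$ to $N$ and using $\Phi_\B\ge0$ gives \eqref{eq:mda-gueb-tail-upper}.

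\textbf{Part (iii): convergence and the iterate rate.} The tail bound implies that $\{x^k\}$ is Cauchy, so it converges to some $x^*$, which lies in $\B$ because $\B$ is closed. We have $r_k\to0$: the bound in (ii) tends to $0$ as $A_k\to\infty$, because $\Psi_\B(t)<\infty$ for $t>0$. By continuity of $h$, $h(x^*)=h^*$, so $x^*\in\argmin_\B h$. Finally, $\|x^k-x^*\|\le\sum_{j\ge k}d_j$. Since $\Phi_\B$ is increasing, combining \eqref{eq:mda-gueb-tail-upper} with (ii) gives \eqref{eq:mda-gueb-x-upper}. Some care is needed that the argument of $\Phi_\B$ lies in $[0,\tau)$; this holds because $\Psi_\B^{-1}(\cdot)\le r_0<\tau$.
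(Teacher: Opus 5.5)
Your proposal is correct and follows the paper's proof essentially step for step. Both arguments use the same one-step recursion $r_{k+1}\le r_k-\cona\conb\alpha_k\phi_{\B}(r_k)$, apply Lemma~\ref{lem:rate-preliminary}\ref{rate-preliminary1} and telescope for the $\Psi_{\B}$-bound, and use the square-root combination of sufficient decrease with \eqref{eq:det-bound} together with Lemma~\ref{lem:rate-preliminary}\ref{rate-preliminary2} for the length bound. The only difference is cosmetic: you obtain $r_k\to0$ from the bound in (ii), while the paper uses a separate contradiction argument on $r_\infty>0$; both are valid.
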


\begin{proof}
Set $r_k:=h(x^k)-h^*$.  If $r_k=0$ for some $k$, then the
sufficient-decrease condition and the definition of $h^*$ give
\[
    h^*
    \le h(x^{k+1})
    \le h^*-\frac{\cona}{\alpha_k}\|x^{k+1}-x^k\|^2.
\]
Hence $x^{k+1}=x^k$, and the trajectory remains constant thereafter. Thus, the following argument is understood up to the first such index, and all the asserted estimates hold trivially afterward. Suppose now that $r_k>0$. By \eqref{eq:basic_1}, \eqref{eq:det-bound}, and the global UEB condition,
\begin{equation}\label{eq:mda-gueb-recursion}
    r_{k+1}
    \le
    r_k-\frac{\cona}{\alpha_k}\|x^{k+1}-x^k\|^2 
    \le
    r_k-\cona\conb\,\alpha_k g(x^k) 
    \le
    r_k-\cona\conb\,\alpha_k\phi_{\B}(r_k).
\end{equation}
Thus $\{r_k\}$ is non-increasing. If $r_k\downarrow r_\infty>0$, then summing
\eqref{eq:mda-gueb-recursion} yields
\[
    r_0
    \ge
    \cona\conb\,\phi_{\B}(r_\infty)
    \sum_{k=0}^{\infty}\alpha_k
    =+\infty,
\]
a contradiction.  Hence $r_k\downarrow0$ unless finite termination has already occurred.
Since $\phi_{\B}$ is non-decreasing, applying Lemma~\ref{lem:rate-preliminary}\ref{rate-preliminary1} to \eqref{eq:mda-gueb-recursion} implies $\Psi_{\B}(r_{k+1})-\Psi_{\B}(r_k)\ge  \cona\conb\,\alpha_k$.
Consequently,
\begin{equation}\label{eq:mda-gueb-psi-growth}
    \Psi_{\B}(r_k)
    \ge
    \Psi_{\B}(r_0)+\cona\conb A_k.
\end{equation}
If $1/\phi_{\B}$ is integrable near $0$, then $\Psi_{\B}(0)<+\infty$, and
\eqref{eq:mda-gueb-psi-growth} cannot hold with $r_k>0$ once
\[
    A_k
    \ge
    \frac{\Psi_{\B}(0)-\Psi_{\B}(r_0)}{\cona\conb}.
\]
This proves finite termination.  If $1/\phi_{\B}$ is not integrable near $0$,
then $\Psi_{\B}(t)\to+\infty$ as $t\downarrow0$.  Since $\Psi_{\B}$ is
decreasing, inversion of \eqref{eq:mda-gueb-psi-growth} gives
\eqref{eq:mda-gueb-h-upper}. It remains to estimate the trajectory length.  The deterministic lower bound
on the step and the global UEB condition imply
\[
    \|x^{k+1}-x^k\|
    \ge
    \sqrt{\conb}\,\alpha_k\sqrt{g(x^k)}
    \ge
    \sqrt{\conb}\,\alpha_k\sqrt{\phi_{\B}(r_k)}.
\]
Combining this inequality with sufficient decrease gives
\[
    r_k-r_{k+1}
    \ge
    \cona\sqrt{\conb}\,
    \|x^{k+1}-x^k\|\sqrt{\phi_{\B}(r_k)}.
\]
Therefore, by Lemma~\ref{lem:rate-preliminary}~\ref{rate-preliminary2}, we have
\begin{align*}
    \Phi_{\B}(r_k)-\Phi_{\B}(r_{k+1})
    \ge
    \cona\sqrt{\conb}\,\|x^{k+1}-x^k\|.
\end{align*}
Summing from $j=k$ to $N$ and letting $N\to\infty$ proves
\eqref{eq:mda-gueb-tail-upper}.  In particular, $\{x^k\}$ is Cauchy.  Since
$\B$ is closed, it converges to some $x^*\in\B$; continuity of $h$ and
$r_k\to0$ then give $h(x^*)=h^*$.  Finally,
\[
    \|x^k-x^*\|
    \le
    \sum_{j=k}^{\infty}\|x^{j+1}-x^j\|,
\]
and combining \eqref{eq:mda-gueb-tail-upper} with
\eqref{eq:mda-gueb-h-upper} proves \eqref{eq:mda-gueb-x-upper}.
\end{proof}

\subsection{A matching MDA instance}
\label{subsec:mda-matching-instance}

We next show that the rates in Theorem~\ref{thm:mda-gueb-benchmark} cannot be improved in general. More precisely, fixed any admissible constants $c_1,c_2,c_3 >0$ and error-bound modulus $\phi_\B$, we construct a
one-dimensional MDA instance for which the lower bounds on objective-value and iterate
match the corresponding upper bounds.

\begin{theorem}[Lower bounds for MDA]
\label{thm:mda-matching-instance}
Fix arbitrary constants $\cona >0$ and $\conc \geq \conb> 0$.
Let $\phi_{\B}\in C^1([0,\eta])$, for some $\eta > 0 $, be a non-decreasing function satisfying that
\[
    \phi_{\B}(0)=0
    \qquad\text{and}\qquad
    \phi_{\B}(t)>0
    \quad \forall t\in (0, \eta].
\]
Depending on whether $1/\sqrt{\phi_{\B}}$ is integrable near $0$, two one-dimensional MDA instances with the following properties exist. Both instances satisfy Assumption~\ref{assmp:basic_det} with the prescribed constants
$(\cona,\conb,\conc)$, Assumption~\ref{assmp:Global-EB} with modulus
$\phi_{\B}$ and $\tau=\eta$, and
Assumption~\ref{assmp:step-size}, and that for every fixed $\gamma>1$, there exists
$K_\gamma<+\infty$ such that
\begin{equation}\label{eq:mda-match-objective-lower}
    h(x^k)-h^*
    \geq
    \Psi_{\B}^{-1}\left(
        \Psi_{\B}\bigl(h(x^0)-h^*\bigr)
        +\gamma\cona\conb A_k
    \right)
    \qquad \forall k\geq K_\gamma.
\end{equation}

\begin{enumerate}[label=\textup{(\roman*)}]
    \item If $1/\sqrt{\phi_{\B}}$ is integrable near $0$, the instance satisfies further that
    \[
        h^*=0,
        \qquad
        \argmin_{x\in\B}h(x)=\{0\},
        \qquad
        x^k\to x^* = 0.
    \]
    Furthermore, for
    every fixed $\sigma\in(0,1)$, there exists $K_\sigma<+\infty$ such
    that
    \begin{equation}\label{eq:mda-match-iterate-lower}
        \|x^k-x^*\|
        \geq
        \frac{\sigma}{\cona\sqrt{\conb}}
        \Phi_{\B}\bigl(h(x^k)-h^*\bigr)
        \qquad \forall k\geq K_\sigma.
    \end{equation}
    Consequently, for all
    $k\geq\max\{K_\gamma,K_\sigma\}$,
    \begin{equation}\label{eq:mda-match-iterate-lower2}
        \|x^k-x^*\|
        \geq
        \frac{\sigma}{\cona\sqrt{\conb}}
        \Phi_{\B}\left(
            \Psi_{\B}^{-1}\left(
                \Psi_{\B}\bigl(h(x^0)-h^*\bigr)
                +\gamma\cona\conb A_k
            \right)
        \right).
    \end{equation}

    \item If $1/\sqrt{\phi_{\B}}$ is not integrable near $0$, the instance satisfies further that
    \[
        h^*=0,
        \qquad
        \argmin_{x\in\B}h(x)=\emptyset,
        \qquad
        x^k\to+\infty.
    \]
\end{enumerate}
\end{theorem}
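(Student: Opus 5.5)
The plan is to build both instances "backwards": first fix a scalar sequence $r_k\downarrow0$ that obeys the worst-case recursion of Theorem~\ref{thm:mda-gueb-benchmark}, then place the iterates on the real line. I take $\B$ to be the closure of the trajectory, and define $h$ and $g$ on $\B$ so that every inequality of Assumption~\ref{assmp:basic_det} and Assumption~\ref{assmp:Global-EB} is satisfied, essentially with equality up to a factor $\mu>1$ that can be taken close to $1$. The key freedom is that the stepsizes belong to the instance. I choose $\alpha_k=\varepsilon/(k+1)$ with $\varepsilon>0$ small. Then Assumption~\ref{assmp:step-size} holds, $\alpha_k\to0$, and the recursion $r_{k+1}=r_k-\mu\cona\conb\alpha_k\phi_\B(r_k)$ with $r_0\in(0,\eta)$ keeps $r_k>0$. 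Since $\phi_\B\in C^1$ with Lipschitz constant $L$, it also gives $0\le\phi_\B(r_k)-\phi_\B(r_{k+1})\le L\mu\cona\conb\alpha_k\phi_\B(r_k)$. Hence the ratios $\theta_k:=\phi_\B(r_k)/\phi_\B(r_{k+1})$ satisfy $\theta_k\to1$, and they are bounded uniformly once $\varepsilon$ is small. This ratio control is what produces the arbitrary constants $\gamma>1$ and $\sigma<1$.

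\emph{Objective lower bound \eqref{eq:mda-match-objective-lower}.} By convexity of $\Psi_\B$ and $-1/\phi_\B(r_{k+1})\in\partial\Psi_\B(r_{k+1})$ (Lemma~\ref{lem:functions}), we get
\[
\Psi_\B(r_{k+1})-\Psi_\B(r_k)\le \frac{r_k-r_{k+1}}{\phi_\B(r_{k+1})}=\mu\theta_k\cona\conb\alpha_k .
\]
Choose $\mu>1$ and $K$ with $\mu\theta_k\le\gamma'$ for $k\ge K$, where $1<\gamma'<\gamma$. Summing shows $\Psi_\B(r_k)\le\Psi_\B(r_0)+C+\gamma'\cona\conb A_k$. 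Because $A_k\to\infty$, the constant $C$ is absorbed into $(\gamma-\gamma')\cona\conb A_k$ for $k\ge K_\gamma$. Inverting the decreasing $\Psi_\B$ gives \eqref{eq:mda-match-objective-lower} with $h(x^k)=r_k$ and $h^*=0$. If $1/\phi_\B$ is integrable, the inverse should be read with the convention that $\Psi_\B^{-1}$ of values beyond $\Psi_\B(0)$ is $0$. In that case I expect to note that $r_k>0$ for all $k$, which is consistent because the recursion never hits $0$ for small $\varepsilon$.

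\emph{Placing the iterates.} In case (i), set $x^k:=\Phi_\B(r_k)/(\cona\sqrt{\conb}\,\mu')$ with $\mu'\ge1$ close to $1$, and $\B=\{x^k\}\cup\{0\}$. This set is closed because $x^k\downarrow0$. Define $h(x^k)=r_k$, $h(0)=0$, and $g(x^k):=|x^{k+1}-x^k|^2/(\conb\alpha_k^2)$, $g(0)=0$. Then \eqref{eq:det-bound} holds with equality and \eqref{eq:basic_4} holds since $\conc\ge\conb$. Continuity at $0$ follows from $r_k\to0$ and $g(x^k)\to0$.

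Lemma~\ref{lem:rate-preliminary}\ref{rate-preliminary2} and its mirror inequality give
\[
\frac{r_k-r_{k+1}}{\sqrt{\phi_\B(r_k)}}\le\Phi_\B(r_k)-\Phi_\B(r_{k+1})\le \frac{r_k-r_{k+1}}{\sqrt{\phi_\B(r_{k+1})}} .
\]
Hence $|x^{k+1}-x^k|$ equals $\sqrt{\conb}\alpha_k\sqrt{\phi_\B(r_k)}$ times a factor in $[\mu/\mu',\,\mu\sqrt{\theta_k}/\mu']$. The lower end yields the GUEB inequality $\phi_\B(r_k)\le g(x^k)$ as soon as $\mu'\le\mu$. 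The upper end yields sufficient decrease \eqref{eq:basic_1} provided $\mu\theta_k/\mu'^2\le1$.

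This is the main obstacle: the two requirements pull $\mu'$ in opposite directions. I would resolve it by letting the multiplier depend on $k$. Define $r_{k+1}$ through $r_k-r_{k+1}=\mu_k\cona\conb\alpha_k\phi_\B(r_k)$ with $\mu_k:=\sup_{j\ge k}\theta_j\to1$, or more simply replace $\phi_\B(r_k)$ by $\phi_\B(r_{k+1})$ in an implicit recursion. In either form the decrease inequality is exactly tight, GUEB still holds, and the $\Psi$-increment bound above still tends to $\cona\conb\alpha_k$. Then \eqref{eq:mda-match-iterate-lower} holds with $\sigma=1/\mu'$ arbitrarily close to $1$, and \eqref{eq:mda-match-iterate-lower2} follows by composing with the increasing $\Phi_\B$ and using \eqref{eq:mda-match-objective-lower}.

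In case (ii), set $x^0=0$ and $x^{k+1}=x^k+\sqrt{\conb}\,\alpha_k\sqrt{\phi_\B(r_k)}\,\rho_k$, with the same bounded correction factors $\rho_k$ and the same definitions of $h$ and $g$ on $\B=\{x^k\}$. The lower chord bound gives $x^k\ge(\Phi\text{-type sum})\ge c\int_{r_k}^{r_0}\phi_\B(s)^{-1/2}\,ds\to+\infty$ by non-integrability. Therefore $\B$ is closed (discrete and unbounded), $h>0$ on $\B$ with $h^*=\inf r_k=0$, and $\argmin_{\B}h=\emptyset$. The objective bound is proved exactly as before.
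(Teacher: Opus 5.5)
There is a genuine gap, and it sits exactly where you place the main obstacle. The statement asks for a \emph{single} instance (one per case) on which \eqref{eq:mda-match-objective-lower} holds for every $\gamma>1$ and \eqref{eq:mda-match-iterate-lower} holds for every $\sigma\in(0,1)$. Only the thresholds $K_\gamma,K_\sigma$ may depend on $\gamma$ and $\sigma$.

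Your placement $x^k=\Phi_{\B}(r_k)/(\cona\sqrt{\conb}\,\mu')$ gives $\|x^k-x^*\|=\Phi_{\B}(r_k)/(\cona\sqrt{\conb}\,\mu')$ exactly. So \eqref{eq:mda-match-iterate-lower} holds only for $\sigma\le 1/\mu'$, and $\mu'>1$ is forced:
\begin{itemize}
\item \eqref{eq:basic_1}, \eqref{eq:det-bound} and GUEB together imply the necessary decrease $r_k-r_{k+1}\ge\cona\conb\alpha_k\phi_{\B}(r_k)$, which is \eqref{eq:mda-gueb-recursion}.
\item \eqref{eq:basic_1} combined with your lower chord bound gives $r_k-r_{k+1}\le\mu'^2\cona\conb\alpha_k\phi_{\B}(r_k)$.
\item If $\mu'=1$, the lower chord bound must be an equality on every $[r_{k+1},r_k]$. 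Then $\phi_{\B}$ is constant on $(0,r_0]$, contradicting $\phi_{\B}(0)=0$ and continuity.
\end{itemize}
Likewise, a fixed $\mu>1$ gives $\Psi_{\B}(r_k)\ge\Psi_{\B}(r_0)+\mu\cona\conb A_k$, so \eqref{eq:mda-match-objective-lower} fails for $\gamma\in(1,\mu)$.

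What your construction can deliver is therefore the weaker statement in which the instance depends on $(\gamma,\sigma)$. For that weaker statement no repair is needed: $\mu'=\mu$ works once $\varepsilon$ is small enough that $\sup_k\theta_k\le\mu$.

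Neither of your repairs closes the gap:
\begin{itemize}
\item The choice $\mu_k:=\sup_{j\ge k}\theta_j$ is circular, since $\theta_j$ depends on $r_{j+1}$ and hence on $\mu_j$. Even if it could be realized, GUEB via the upper chord bound requires $\mu_k\ge\mu'/\sqrt{\theta_k}$. So $\mu_k\to1$ forces $\mu'\le1$, which is the contradiction above.
\item The implicit recursion $r_k-r_{k+1}=\cona\conb\alpha_k\phi_{\B}(r_{k+1})$ violates the necessary decrease \eqref{eq:mda-gueb-recursion} whenever $\phi_{\B}(r_{k+1})<\phi_{\B}(r_k)$. No placement of the iterates can then make GUEB hold.
\end{itemize}

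The missing idea is to decouple the positions from $\Phi_{\B}$ and make all three structural relations exact.
\begin{itemize}
\item Take $\mu=1$, i.e.\ $r_{k+1}=r_k-\cona\conb\alpha_k\phi_{\B}(r_k)$. The admissible window $\sqrt{\conb}\,\alpha_k\sqrt{\phi_{\B}(r_k)}\le|x^{k+1}-x^k|\le\sqrt{\alpha_k(r_k-r_{k+1})/\cona}$ then collapses to the single value $d_k:=\sqrt{\conb}\,\alpha_k\sqrt{\phi_{\B}(r_k)}$.
\item Set $g(x^k):=\phi_{\B}(r_k)$. In case (i) set $x^k:=\sum_{j\ge k}d_j$; in case (ii) use partial sums from $x^0=0$, which is your case-(ii) recipe with $\rho_k\equiv1$. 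Now \eqref{eq:basic_1}, \eqref{eq:det-bound} and GUEB all hold with equality.
\item All remaining slack sits in the comparison $(1-\cona\conb M\alpha_k)\phi_{\B}(r_k)\le\phi_{\B}(s)\le\phi_{\B}(r_k)$ on $[r_{k+1},r_k]$, where $M:=\max_{[0,\eta]}\phi_{\B}'$ (your $L$). This yields $\cona\sqrt{\conb}\,d_k\le\Phi_{\B}(r_k)-\Phi_{\B}(r_{k+1})\le\cona\sqrt{\conb}\,d_k/\sqrt{1-\cona\conb M\alpha_k}$ and $\Psi_{\B}(r_{k+1})-\Psi_{\B}(r_k)\le\cona\conb\alpha_k/(1-\cona\conb M\alpha_k)$.
\item Since $\alpha_k\to0$ and $\sum_k\alpha_k^2<\infty$, these factors tend to $1$ on one fixed instance. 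That is what delivers every $\gamma>1$ and every $\sigma<1$ at once, and it is the paper's construction.
\end{itemize}
Incidentally, $\phi_{\B}(t)\le Mt$ shows $1/\phi_{\B}$ is never integrable near $0$ here, so your convention for $\Psi_{\B}^{-1}$ beyond $\Psi_{\B}(0)$ is not needed.
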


\begin{proof}
Let $M:=\max_{0\leq t\leq\eta}\phi_{\B}'(t)$. Since $\phi_{\B}(0)=0$ and $\phi_{\B}$ is non-decreasing and positive on $(0,\eta]$, we have $M>0$ and $\phi_{\B}(t)\leq Mt$ on $[0,\eta]$. Hence $1/\phi_{\B}$ is not integrable near $0$. Choose $a>0$ such that $\cona\conb Ma<1$ and set $\alpha_k:=a/(k+1)$. Then Assumption~\ref{assmp:step-size} holds, $\alpha_k\to0$, and $A_k\to+\infty$. Fix $r_0\in(0,\eta)$ and define
\[
r_{k+1}:=r_k-\cona\conb\alpha_k\phi_{\B}(r_k),\qquad d_k:=\sqrt{\conb}\alpha_k\sqrt{\phi_{\B}(r_k)}.
\]
Since $r_k\geq r_{k+1}\geq(1-\cona\conb M\alpha_k)r_k>0$, the sequence $\{r_k\}$ is well defined and decreasing. If $r_k\downarrow r_\infty>0$, then $r_0-r_{N+1}\geq\cona\conb\phi_{\B}(r_\infty)\sum_{k=0}^{N}\alpha_k\to+\infty$, a contradiction. Hence $r_k\downarrow0$. For $s\in[r_{k+1},r_k]$, monotonicity and the definition of $M$ give $\phi_{\B}(s)\leq\phi_{\B}(r_k)$ and $\phi_{\B}(r_k)-\phi_{\B}(s)\leq M(r_k-r_{k+1})=\cona\conb M\alpha_k\phi_{\B}(r_k)$. Thus
\begin{equation}\label{eq:mda-match-phi-comparison}
(1-\cona\conb M\alpha_k)\phi_{\B}(r_k)\leq\phi_{\B}(s)\leq\phi_{\B}(r_k).
\end{equation}
Since $r_k-r_{k+1}=\cona\conb\alpha_k\phi_{\B}(r_k)$, \eqref{eq:mda-match-phi-comparison} yields
\begin{equation}\label{eq:mda-match-psi-increment}
\cona\conb\alpha_k\leq\Psi_{\B}(r_{k+1})-\Psi_{\B}(r_k)\leq\frac{\cona\conb\alpha_k}{1-\cona\conb M\alpha_k},
\end{equation}
and
\begin{equation}\label{eq:mda-match-phi-length}
\cona\sqrt{\conb}\,d_k\leq\int_{r_{k+1}}^{r_k}\frac{ds}{\sqrt{\phi_{\B}(s)}}\leq\frac{\cona\sqrt{\conb}}{\sqrt{1-\cona\conb M\alpha_k}}\,d_k.
\end{equation}

Suppose first that $1/\sqrt{\phi_{\B}}$ is integrable near $0$. Summing the first inequality in \eqref{eq:mda-match-phi-length} gives $\sum_{k=0}^{\infty}d_k\leq\Phi_{\B}(r_0)/(\cona\sqrt{\conb})<+\infty$. Define $x^k:=\sum_{j=k}^{\infty}d_j$ and the instance by
\[
\B:=\{0\}\cup\{x^k:k\geq0\},\qquad h(0)=g(0):=0,\qquad h(x^k):=r_k,\qquad g(x^k):=\phi_{\B}(r_k).
\]
Then $x^k\to0$, the functions $h$ and $g$ are continuous on the closed set $\B$, and $h^*=0$ with $\argmin_{x\in\B}h(x)=\{0\}$. 

Suppose next that $1/\sqrt{\phi_{\B}}$ is not integrable near $0$. Define $x^0:=0$, $x^k:=\sum_{j=0}^{k-1}d_j$ for $k\geq1$, and the instance by
\[
\B:=\{x^k:k\geq0\},\qquad h(x^k):=r_k,\qquad g(x^k):=\phi_{\B}(r_k).
\]
Summing the second inequality in \eqref{eq:mda-match-phi-length} gives
\[
\sum_{j=0}^{N}d_j\geq\frac{\sqrt{1-\cona\conb Ma}}{\cona\sqrt{\conb}}\int_{r_{N+1}}^{r_0}\frac{ds}{\sqrt{\phi_{\B}(s)}}\longrightarrow+\infty.
\]
Hence $x^k\to+\infty$, so $\B$ is closed and discrete, while $h^*=0$ and $\argmin_{x\in\B}h(x)=\emptyset$. In either construction, $\|x^{k+1}-x^k\|=d_k$ and $g(x^k)=\phi_{\B}(r_k)=\phi_{\B}(h(x^k)-h^*)$, so GUEB holds with equality. Moreover, $h(x^k)-h(x^{k+1})=r_k-r_{k+1}=\cona d_k^2/\alpha_k$ and $d_k^2=\conb\alpha_k^2g(x^k)\leq\conc\alpha_k^2g(x^k)$. Thus Assumption~\ref{assmp:basic_det} holds with the prescribed constants $(\cona,\conb,\conc)$. Summing \eqref{eq:mda-match-psi-increment} gives $\Psi_{\B}(r_k)\leq\Psi_{\B}(r_0)+\cona\conb\sum_{j=0}^{k-1}\alpha_j/(1-\cona\conb M\alpha_j)$. Since $\sum_j\alpha_j^2<+\infty$, the sum equals $A_k+O(1)$. Therefore, for every $\gamma>1$, there exists $K_\gamma<+\infty$ such that $\Psi_{\B}(r_k)\leq\Psi_{\B}(r_0)+\gamma\cona\conb A_k$ for all $k\geq K_\gamma$. Since $\Psi_{\B}$ is decreasing, $r_k\geq\Psi_{\B}^{-1}(\Psi_{\B}(r_0)+\gamma\cona\conb A_k)$, proving \eqref{eq:mda-match-objective-lower}. 

Finally, in the integrable case, the second inequality in \eqref{eq:mda-match-phi-length} gives $d_k\geq\sqrt{1-\cona\conb M\alpha_k}\,[\Phi_{\B}(r_k)-\Phi_{\B}(r_{k+1})]/(\cona\sqrt{\conb})$. Given $\sigma\in(0,1)$, choose $K_\sigma$ such that $\sqrt{1-\cona\conb M\alpha_k}\geq\sigma$ for all $k\geq K_\sigma$. Then $\|x^k-x^*\|=\sum_{j=k}^{\infty}d_j\geq\sigma\Phi_{\B}(r_k)/(\cona\sqrt{\conb})$, proving \eqref{eq:mda-match-iterate-lower}. Combining this with \eqref{eq:mda-match-objective-lower} and the monotonicity of $\Phi_{\B}$ proves \eqref{eq:mda-match-iterate-lower2}.
\end{proof}

Theorem~\ref{thm:mda-matching-instance} shows that Theorem~\ref{thm:mda-gueb-benchmark} is sharp both qualitatively and quantitatively. Indeed, the constructed MDA instance attains the same convergence profiles as in Theorem~\ref{thm:mda-gueb-benchmark}: the objective gap and the iterate error evolve
according to $ \Psi_{\B}^{-1}(\Theta(A_k))$ and  $\Theta(\Phi_{\B}(\Psi_{\B}^{-1}(\Theta(A_k))))$, respectively.
Therefore, these rate profiles are not artifacts of the analysis; they are
achievable within the abstract MDA--UEB regime. 

Moreover, the constants appearing in the upper bounds are also sharp. More
precisely, the arbitrary constants $\gamma>1$ and $\sigma\in(0,1)$ in
Theorem~\ref{thm:mda-matching-instance} are purely numerical constants,
independent of the constants $c_1,c_2,c_3$, the error-bound modulus $\phi_\B$, and the stepsizes $\alpha_k$.
Consequently, by letting $\gamma\downarrow1$ and $\sigma\uparrow1$, the
constructed instances approach the coefficients in the upper
bounds. Hence the coefficient $\cona\conb$ in the objective-value bound and
the coefficient $1/(\cona\sqrt{\conb})$ in the iterate estimate cannot be
uniformly improved over the MDA class under the stated assumptions.




\subsection{Extension to local UEB conditions}
\label{subsec:mda-local-matching}

The local UEB extension follows by applying the same argument to the tail of a convergent trajectory. Suppose that $x^k\to x^*$. Since $h(x^k)\downarrow h(x^*)$,
there exists $K$ such that for all $k\ge K$, the tail trajectory lies in the
neighborhood where the local UEB condition at $x^*$ holds. Therefore,
Theorem~\ref{thm:mda-gueb-benchmark} applies to the tail with the substitutions
\[
    h^*\leftarrow h(x^*),
    \qquad
    \phi_{\B}\leftarrow\phi_{x^*},
    \qquad
    \Psi_{\B}\leftarrow\Psi_{x^*},
    \qquad
    \Phi_{\B}\leftarrow\Phi_{x^*},
    \qquad
    x^0\leftarrow x^K,
\]
and
\[
    A_k\leftarrow A_{K,k}:=\sum_{i=K}^{k-1}\alpha_i .
\]
The matching construction in Theorem~\ref{thm:mda-matching-instance} also satisfies
the local UEB assumption in the finite-length regime. Indeed, in this case the
constructed instance has
\[
    \B=\{0\}\cup\{x^k:k\ge0\},
    \qquad
    F=\{x\in\B:g(x)=0\}=\{0\},
\]
and
\[
    g(x)
    =
    \phi_{\B}(h(x)-h^*)
    =
    \phi_{\B}(h(x)-h(0)) 
    \qquad
    \forall x\in\B .
\]
Hence the local UEB condition holds at the only point $x^*=0\in F$
for the same modulus $\phi_{\B}$ with equality. Therefore, the
trajectory-wise lower estimates established in Theorem~\ref{thm:mda-matching-instance}
also provide matching lower estimates for the local UEB theory.

}

\section{\revise{Existing Methods and Their Limitations}}
\label{sec:exist-methods}
\revise{
The goal of this work is to establish sharp convergence rates for MRAs under unified error-bound conditions. Specifically,
under GUEB and LUEB conditions, we derive non-asymptotic in-expectation and/or asymptotic almost-sure rates for the corresponding $h$-gaps and iterate errors.
Before doing so, we explain in this section why existing techniques for deterministic or stochastic algorithms are inadequate. We focus exclusively on results derived from the abstract assumptions, Assumption~\ref{assmp:basic} or Assumption~\ref{assmp:basic_det}, together with GUEB and LUEB conditions
in Assumption~\ref{assmp:Global-EB} and Assumption~\ref{assmp:local-EB}, without
exploiting any additional structure of specific problems or algorithms.}

\subsection{\revise{Existing analysis for deterministic algorithms}}
\label{sec:exist-det}
\revise{
We first review standard techniques for analyzing monotone deterministic algorithms (MDA).
As specified in Assumption~\ref{assmp:basic_det}, an MDA sequence satisfies the relations:
\begin{empheq}[left=\text{(Strong Descent Condition)}\quad\empheqlbrace]{align}
h(x^k)-h(x^{k+1})
&\geq \frac{\cona}{\alpha_k}\,\|x^k-x^{k+1}\|^2,
\tag{SDC1}\label{eq:SDC1} \\
\|x^k-x^{k+1}\|^2
&\geq \conb\alpha_k^2 g(x^k).
\tag{SDC2}\label{eq:SDC2}
\end{empheq}
Combining these two inequalities gives the fundamental descent estimate
\[
    h(x^k)-h(x^{k+1})
    \geq
    \cona\conb\alpha_k g(x^k).
\]
Let
\[
    h^\infty:=\lim_{k\to\infty}h(x^k).
\]
Under the GUEB, the residual measure $g(x^k)$ can be
controlled by the gap $h(x^k)-h^\infty$, which yields a
recursion for the objective sequence, solving which leads to the
standard convergence rate results for the progress metric $h$. Under the LUEB, the same argument applies after the sequence enters the error-bound neighborhood, yielding a recursion of the same form. This approach is classical in deterministic algorithmic analyses based
on error bounds, PL conditions, and KL inequalities; see, for example,
\cite{luo1993error,karimi2016linear,NecoaraNesterovGlineur2019Linear,
DrusvyatskiyLewis2018ErrorBounds,Bolte2017Errora} for optimization problems,
\cite{BauschkeBorwein1993VonNeumann,
bauschke1996projection,
BauschkeBorweinLewis1997CyclicProjections,
BauschkeBorweinLi1999StrongCHIP,liu2024convergence} for convex feasibility problems, and
\cite{Cegielski2012IterativeMethods,
BauschkeNollPhan2015AveragedOperators,
borwein2017convergence,
LewisLukeMalick2009LocalLinear,
LukeThaoTam2018Quantitative,
LukeTeboulleThao2020Necessary,Liu2024ConcreteCR} for common fixed-point problems.

The role of inequality~\eqref{eq:SDC2} is even more important in
the analysis of iterate convergence. Combining \eqref{eq:SDC1} with the
square-root form of \eqref{eq:SDC2} yields
\[
    h(x^k)-h(x^{k+1})
    \geq
    \cona\sqrt{\conb}\,
    \sqrt{g(x^k)}\,
    \|x^{k+1}-x^k\|.
\]
Together with the error-bound condition relating the progress metrics $g(x^k)$ and $h(x^k) - h^\infty$, this
inequality controls the step length through the decrease of the
sequence $h(x^k)$ and the geometry encoded in the error-bound modulus $\phi_\B$. This is the key
mechanism behind finite-length arguments and iterate-rate estimates in
deterministic KL-based analyses
\cite{Absil2005Convergence,Attouch2010Proximal,Attouch2013Convergence,
FrankelGarrigosPeypouquet2015Splitting,Bolte2014Proximal}. A telescoping argument
then converts the value-rate estimate into a rate for the iterates.

These deterministic arguments, however, rely critically on inequality~\eqref{eq:SDC2}, which is missing from MRAs. Instead, MRAs have only the conditional expectation estimate
\[
    \mathbb{E}
    [\|x^{k+1}-x^k\|^2\mid\mathcal F_k]
    \geq
    \conb\alpha_k^2 g(x^k),
\]
which does not provide direct control of individual realizations of the step
lengths. Consequently, the classical deterministic arguments based on strong
descent cannot be directly extended to randomized algorithms.}

\subsection{\revise{Existing analysis for related stochastic algorithms}}

\revise{

}

\subsubsection{\revise{Existing in-expectation rates of $h(x^k)$ under global error bounds}}
\revise{
A standard way to exploit an error-bound condition in stochastic analysis is to
condition on the filtration $\mathcal{F}_k$. In our setting, combining the sufficient
decrease condition, the conditional expected-progress bound, and GUEB yields
\begin{equation}\label{eq:condexp-descent}
    \Exp\!\left[h(x^{k+1})-h^*\,\middle|\,\mathcal{F}_k\right]
    \le
    h(x^k)-h^*
    -
    \cona\conb\alpha_k
    \phi_{\B}\!\left(h(x^k)-h^*\right).
\end{equation}
Taking expectations gives
\[
    \Exp[h(x^{k+1})-h^*]
    \le
    \Exp[h(x^k)-h^*]
    -
    \cona\conb\alpha_k
    \Exp\!\left[
        \phi_{\B}\!\left(h(x^k)-h^*\right)
    \right].
\]
If $\phi_{\B}$ is convex, Jensen's inequality then leads to a deterministic recursion
\begin{equation}\label{eq:exp-descent}
    \Exp[h(x^{k+1})-h^*]
    \le
    \Exp[h(x^k)-h^*]
    -
    \cona\conb\alpha_k
    \phi_{\B}\!\left(\Exp[h(x^k)-h^*]\right),
\end{equation}
from which an in-expectation convergence rate for the objective gap follows directly.

Conditional-expectation descent recursions of this type are standard in
stochastic quasi-Fejér and almost-supermartingale analyses
\cite{RobbinsSiegmund1971,CombettesPesquet2015}. Closely related arguments are also
widely used in the analysis of randomized projection and randomized Kaczmarz
methods \cite{Nedic,wang2016stochastic,Necoara2019,Strohmer2009Randomized,
LeventhalLewis2010,GowerRichtarik2015}, randomized fixed point iteration \cite{hermer2019random}, randomized coordinate-descent methods
\cite{Nesterov2012Efficiency,beck2013convergence,Richtarik2014Iteration,
Lu2015Complexity}, and stochastic gradient methods under strong-growth noise
conditions \cite{Schmidt2013Fast}, typically in conjunction with classical
error-bound or Polyak--Łojasiewicz-type conditions. When expressed in the GUEB
framework, the moduli used in these rate analyses are typically linear or convex
power-type functions, for which Jensen's inequality is applicable. For completeness, we summarize how these techniques apply within our MRA framework under the GUEB condition with convex error-bound moduli.}

\begin{theorem}\label{thm:globalEB-exph-cvx}
Suppose that Assumptions~\ref{assmp:basic}, \ref{assmp:step-size}, and~\ref{assmp:Global-EB} hold and that $\phi_{\B}$ is convex. \revise{Then $1/\phi_{\B}$ is not integrable near $0$.} \revise{If $h(x^0)=h^*$, then the trajectory is stationary almost surely. Otherwise, for any $k\geq1$,}
\begin{equation}\label{eq:globalEB-exph-cvx-0}
\Exp[h(x^k)-h^*]\leq\Psi_{\B}^{-1}\left(\Psi_{\B}(h(x^0)-h^*)+\cona\conb\revise{A_k}\right).
\end{equation}
\end{theorem}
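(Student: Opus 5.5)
The proof has three parts: non-integrability of $1/\phi_{\B}$, the stationary case, and the main rate bound.

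\emph{Non-integrability.} Since $\phi_{\B}$ is convex on $[0,\tau)$ with $\phi_{\B}(0)=0$, for any fixed $t_1\in(0,\tau)$ and $t\in(0,t_1]$ we have
\[
\phi_{\B}(t)\le \frac{t}{t_1}\,\phi_{\B}(t_1).
\]
Hence $1/\phi_{\B}(t)\ge t_1/(\phi_{\B}(t_1)\,t)$. The right-hand side is not integrable near $0$, so neither is $1/\phi_{\B}$. By Lemma~\ref{lem:functions}, this gives $\Psi_{\B}(0)=+\infty$, and $\Psi_{\B}^{-1}$ is defined on $[\Psi_{\B}(t),+\infty)$ for every $t\in(0,\tau)$.

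\emph{Stationary case.} If $h(x^0)=h^*$, then \eqref{eq:basic_1} together with $h\ge h^*$ on $\B$ forces $x^1=x^0$. Inductively, $x^k=x^0$ for all $k$, pathwise.

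\emph{Main case.} Let $r_k:=h(x^k)-h^*\in[0,\tau)$ and $e_k:=\Exp[r_k]$.
\begin{enumerate}[label=(\arabic*)]
\item Take conditional expectations in \eqref{eq:basic_1} and use \eqref{eq:expbound} and \eqref{eq:globalEB}. This gives the recursion \eqref{eq:condexp-descent}:
\[
\Exp[r_{k+1}\mid\cF_k]\le r_k-\cona\conb\alpha_k\phi_{\B}(r_k).
\]
Integrability is not an issue, because $0\le r_k\le r_0$ by monotonicity of $h(x^k)$.
\item Take total expectations. Since $\phi_{\B}$ is convex, Jensen gives $\Exp[\phi_{\B}(r_k)]\ge\phi_{\B}(e_k)$, which yields \eqref{eq:exp-descent}:
\[
e_{k+1}\le e_k-\cona\conb\alpha_k\phi_{\B}(e_k).
\]
Convexity on $[0,\tau)$ with $e_k\in[0,\tau)$ is enough for Jensen. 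If $\tau=+\infty$, convexity on $\R_+$ suffices.
\item Feed this deterministic recursion into Lemma~\ref{lem:rate-preliminary}\ref{rate-preliminary1}, exactly as in the proof of Theorem~\ref{thm:mda-gueb-benchmark}. Whenever $e_{k+1}>0$ (hence $e_k>0$, since $e_k\ge e_{k+1}$), we get
\[
\Psi_{\B}(e_{k+1})\ge\Psi_{\B}(e_k)+\cona\conb\alpha_k.
\]
Telescoping gives $\Psi_{\B}(e_k)\ge\Psi_{\B}(e_0)+\cona\conb A_k$, with $e_0=r_0>0$. Since $\Psi_{\B}$ is decreasing and its range contains $[\Psi_{\B}(e_0),+\infty)$, inverting yields \eqref{eq:globalEB-exph-cvx-0}.
\end{enumerate}

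\emph{Main obstacle: the case $e_k=0$ for some $k$.} This falls outside the domain $(0,\tau)$ required by Lemma~\ref{lem:rate-preliminary}\ref{rate-preliminary1}. I would treat it separately. If $e_k=0$, then $e_j=0$ for all $j\ge k$, so the bound holds trivially because its right-hand side is positive. The telescoping argument is therefore applied only up to the last index with $e_k>0$. A secondary check is that $e_k<\tau$, which holds since $e_k\le r_0<\tau$ by Assumption~\ref{assmp:Global-EB}\ref{ass_g_E_3}.
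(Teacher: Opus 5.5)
Your proposal is correct and follows essentially the same route as the paper's proof. Both arguments use the chord bound $\phi_{\B}(t)\le t\,\phi_{\B}(t_1)/t_1$ for non-integrability and stationarity from sufficient decrease. The main case in both is the conditional descent, then Jensen to obtain~\eqref{eq:exp-descent}, then Lemma~\ref{lem:rate-preliminary}\ref{rate-preliminary1} with telescoping and inversion, with the degenerate case $\Exp[h(x^k)-h^*]=0$ treated separately as trivial.
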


\begin{proof}
\revise{Fix $\delta\in(0,\tau)$. Since $\phi_{\B}$ is convex and $\phi_{\B}(0)=0$, $\phi_{\B}(t)\leq t\phi_{\B}(\delta)/\delta$ for all $t\in(0,\delta]$, so $1/\phi_{\B}$ is not integrable near $0$. If $h(x^0)=h^*$, Assumption~\ref{assmp:basic}\ref{basic_1} implies that the trajectory is stationary almost surely.} \revise{Suppose that $h(x^0)>h^*$ and set $r_k:=\Exp[h(x^k)-h^*]$.} For any $k\geq1$ such that $r_k>0$, \eqref{eq:exp-descent} implies that $r_i>0$ for all $i\leq k$. Applying Lemma~\ref{lem:rate-preliminary}\ref{rate-preliminary1} to \eqref{eq:exp-descent} and summing from $i=0$ to $k-1$ yields
\begin{equation}\label{eq:globalEB-exph-cvx-1}
\Psi_{\B}(r_k)\geq\Psi_{\B}(r_0)+\cona\conb\revise{A_k}.
\end{equation}
Since $\Psi_{\B}$ is strictly decreasing and $r_0=h(x^0)-h^*$, inversion gives \eqref{eq:globalEB-exph-cvx-0}. If $r_k=0$, \eqref{eq:globalEB-exph-cvx-0} holds trivially. This completes the proof.
\end{proof}

\subsubsection{\revise{Non-convex moduli and limitations of convexification}} \label{subsubsection:non-convex_moduli}

\revise{
When the modulus $\phi_{\B}$ is non-convex, the pivotal implication from \eqref{eq:exp-descent} to \eqref{eq:condexp-descent} is not valid in general.
The following  optimization example shows that a non-convex error-bound modulus can arise naturally.

\begin{example}[GUEB with a nonconvex modulus $\phi_\B$]\label{ex:nonconvex-phi}
Let $\phi_{\B}:[0,\eta]\to\mathbb{R}_{+}$ be any continuous non-decreasing function
satisfying $\phi_{\B}(0)=0 $ and $\phi_{\B}(t)>0$ for $t>0$,
where $1/\sqrt{\phi_{\B}(s)}$ is integrable near $0$. Define
\[
    Q(t):=\int_t^{\eta}\frac{ds}{\sqrt{\phi_{\B}(s)}},
\]
and let $f:=Q^{-1}$ on the range of $Q$. Then $f'(x)=-\sqrt{\phi_{\B}(f(x))}$ whenever $f$ is differentiable. Consider the one-dimensional
optimization problem
\[
    \min_{x\in S} h(x),
\]
where $h(x):=f(x)$, and define the stationarity measure $g(x):=|h'(x)|^2 $.
By construction,
\[
    g(x)
    =
    |f'(x)|^2
    =
    \phi_{\B}(f(x))
    =
    \phi_{\B}(h(x)-h^*),
\]
where $h^*=0$ is the infimum value of $h$. The last display thus shows that the GUEB holds with equality.
Therefore, any admissible error-bound modulus $\phi_{\B}$ can be realized by a concrete
one-dimensional optimization problem, regardless of its convexity.
\end{example}

For a problem with a nonconvex error-bound modulus, a tempting idea is to replace it by a convex minorant and then apply the preceding Jensen's inequality-based techniques. This approach, however, has several limitations. First, constructing a useful convex
minorant is generally problem-dependent and must be carried out on a case-by-case basis. Second, without a uniform quantitative comparison between the original modulus and its convex minorant, the resulting non-asymptotic rate may be substantially weaker. Third, a nontrivial convex minorant may not exist on an unbounded domain. For example, suppose that $\B=\R$ and that $\phi_{\B}$ is asymptotically flat in the sense that $\phi_{\B}(t)=o(t)$ as $t\to+\infty$.
If there exists a convex non-decreasing function $\widehat{\phi}$ satisfying $\widehat{\phi}(0)=0$, $0\le \widehat{\phi}(t)\le\phi_{\B}(t)$ and $\widehat{\phi}(t_0)>0$ for some $t_0>0$, then convexity would imply
\[
    \phi_\B (t) \ge \widehat{\phi}(t)
    \ge
    \frac{\widehat{\phi}(t_0)}{t_0}\,t
    \qquad \forall t\ge t_0,
\]
contradicting $\phi_{\B}(t)=o(t)$. Hence the only convex minorant is the trivial function $\widehat{\phi}\equiv0$. Therefore, convexification is not guaranteed to yield a rate-preserving convex modulus. 
}

\subsubsection{\revise{Existing in-expectation rates of $x^k$ under global error bounds}}
\revise{
Once an in-expectation rate of the gap $h(x^k)-h^*$ is available, we can transform it to that of iterates combining the usual deterministic KL analysis tools with concavity of the desingularization function $\Phi_\B$ and Jensen's inequality, see also \cite[Lemma 4.2]{Fest2026Stochastic} for this type of argument.}

\begin{proposition}\label{prop:iter-global-exp}
    Suppose that Assumptions~\ref{assmp:basic} and~\ref{assmp:Global-EB} hold and that $1/\sqrt{\phi_{\B}}$ is integrable near $0$.
    \revise{Then there exists a random variable $x^*$ such that $x^k$ converges to $x^*$ both almost surely and in $L^1$, and}
    \begin{equation}\label{eq:ite-rate-exp}
        \Exp[\norm{x^k-x^*}] \leq \frac{\sqrt{\conc}}{\cona \conb } \Phi_{\B}(\Exp[h(x^k)-h^*])\quad \forall k\ge 0 .
    \end{equation}
\end{proposition}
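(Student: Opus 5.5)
The plan is to replace the missing pathwise inequality \eqref{eq:SDC2} by a \emph{conditional} one-step length estimate. That estimate combines the upper bound \eqref{eq:basic_4} with the expected decrease coming from \eqref{eq:basic_1} and \eqref{eq:expbound}. Afterwards we telescope in expectation and apply Jensen's inequality to the concave function $\Phi_{\B}$ (Lemma~\ref{lem:functions}\ref{lem_func_2}).

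Set $r_k:=h(x^k)-h^*$ and $\Delta_k:=\|x^{k+1}-x^k\|$. By \eqref{eq:basic_1} the sequence $r_k$ is non-increasing pathwise, and by Assumption~\ref{assmp:Global-EB}\ref{ass_g_E_3} we have $0\le r_k<\tau$, so $\Phi_{\B}(r_k)$ is well defined.

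We first dispose of the degenerate cases. If $r_k=0$, then \eqref{eq:basic_1} forces $\Delta_k=0$. If $g(x^k)=0$, then \eqref{eq:basic_4} forces $\Delta_k=0$. In both cases the estimate below holds trivially.

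Now assume $r_k>0$ and $g(x^k)>0$. Taking conditional expectations in \eqref{eq:basic_1} and using \eqref{eq:expbound} gives
\[
\Exp[r_k-r_{k+1}\mid\cF_k]\ \ge\ \frac{\cona}{\alpha_k}\Exp[\Delta_k^2\mid\cF_k]\ \ge\ \cona\conb\,\alpha_k\, g(x^k).
\]
Hence $\alpha_k\le \Exp[r_k-r_{k+1}\mid\cF_k]/(\cona\conb g(x^k))$.

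Next, \eqref{eq:basic_4} bounds the realized step by $\Delta_k\le\sqrt{\conc}\,\alpha_k\sqrt{g(x^k)}$, and $\alpha_k\sqrt{g(x^k)}$ is $\cF_k$-measurable. Combining this with the previous bound and with GUEB, $g(x^k)\ge\phi_{\B}(r_k)$, yields
\[
\Exp[\Delta_k\mid\cF_k]\ \le\ \frac{\sqrt{\conc}}{\cona\conb}\,\frac{\Exp[r_k-r_{k+1}\mid\cF_k]}{\sqrt{g(x^k)}}\ \le\ \frac{\sqrt{\conc}}{\cona\conb}\,\Exp\!\left[\frac{r_k-r_{k+1}}{\sqrt{\phi_{\B}(r_k)}}\,\middle|\,\cF_k\right].
\]
Since $0\le r_{k+1}\le r_k$ pathwise, Lemma~\ref{lem:rate-preliminary}\ref{rate-preliminary2} with $\Delta=(r_k-r_{k+1})/\sqrt{\phi_{\B}(r_k)}$ gives $(r_k-r_{k+1})/\sqrt{\phi_{\B}(r_k)}\le \Phi_{\B}(r_k)-\Phi_{\B}(r_{k+1})$. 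Therefore
\[
\Exp[\Delta_k]\le C\,\Exp[\Phi_{\B}(r_k)-\Phi_{\B}(r_{k+1})],\qquad C:=\frac{\sqrt{\conc}}{\cona\conb}.
\]

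Summing this inequality from $j=k$ to $N$ and letting $N\to\infty$, monotone convergence gives
\[
\Exp\Big[\sum_{j\ge k}\Delta_j\Big]\le C\,\Exp\big[\Phi_{\B}(r_k)-\Phi_{\B}(r_\infty)\big]\le C\,\Exp[\Phi_{\B}(r_k)],
\]
where $r_\infty:=\lim_k r_k$ exists pathwise by monotonicity. This bound is finite, so $\sum_j\Delta_j<\infty$ almost surely. Hence $\{x^k\}$ is almost surely Cauchy and has a limit $x^*$, which is measurable as a pointwise limit. Moreover, $\|x^k-x^*\|\le\sum_{j\ge k}\Delta_j$.

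For $L^1$ convergence, note that $0\le\Phi_{\B}(r_k)-\Phi_{\B}(r_\infty)\le\Phi_{\B}(r_0)$, and this difference tends to $0$ pathwise by continuity of $\Phi_{\B}$. Dominated convergence then gives $\Exp\|x^k-x^*\|\to0$, without needing $r_\infty=0$.

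Finally, $\Phi_{\B}$ is concave, so Jensen's inequality gives $\Exp[\Phi_{\B}(r_k)]\le\Phi_{\B}(\Exp r_k)$, and $\Exp r_k\in[0,\tau)$. Combining this with the tail bound yields \eqref{eq:ite-rate-exp}.

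The step I expect to be the main obstacle is the conditional length estimate. The issue is that only the expected step is bounded from below, so the realized decrease $r_k-r_{k+1}$ cannot be tied to $\Delta_k$ pathwise. The trick is to keep the lower bound inside the conditional expectation and use the deterministic upper bound \eqref{eq:basic_4} for the realized step. The price is the constant $\sqrt{\conc}/(\cona\conb)$ instead of $1/(\cona\sqrt{\conb})$.
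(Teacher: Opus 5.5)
Your proof is correct and follows essentially the same route as the paper. You combine the conditional descent from \eqref{eq:basic_1}--\eqref{eq:expbound} with the pathwise bound \eqref{eq:basic_4} and GUEB to get $\Exp\|x^{k+1}-x^k\|\le \frac{\sqrt{\conc}}{\cona\conb}\,\Exp[\Phi_{\B}(r_k)-\Phi_{\B}(r_{k+1})]$, then telescope and apply Jensen to the concave $\Phi_{\B}$. The only minor difference is that you move the $\cF_k$-measurable weight $1/\sqrt{\phi_{\B}(r_k)}$ inside the conditional expectation and use the subgradient inequality pathwise at the realized $r_{k+1}$, whereas the paper uses it at the conditional mean $\Exp[r_{k+1}\mid\cF_k]$ and then needs conditional Jensen; both give the same one-step bound.
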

\begin{proof}
    \revise{First, Assumptions~\ref{assmp:basic} and~\ref{assmp:Global-EB}\ref{ass_g_E_3}, together with inequality~\eqref{eq:basic_1}, imply that
    \[
        0\leq h(x^k)-h^*
        \leq h(x^0)-h^*<\tau
        \quad\forall k\geq 0.
    \]
    In particular, all the random variables appearing in the conditional expectations below are integrable.} \revise{Taking the conditional expectation in inequality~\eqref{eq:basic_1} and using inequality~\eqref{eq:expbound}, we obtain}
    \[
    \Exp[h(x^{k+1})-h^*\mid \cF_k] \leq h(x^k)-h^* - \cona \conb  \alpha_k g(x^k).
    \]
    By Assumption~\ref{assmp:Global-EB} and inequality~\eqref{eq:basic_4}, we see that 
    \[
    \sqrt{g(x^k)} \geq \frac{1}{\sqrt{\conc}\alpha_k} \norm{x^{k+1}-x^k}\quad \text{and} \quad\sqrt{g(x^k)}\geq \sqrt{\phi_{\B}(h(x^k)-h^*)}.
    \]
    Combining these inequalities, we have
    \[
    \Exp[h(x^{k+1})-h^*\mid \cF_k] \leq h(x^k)-h^* -   \frac{\cona \conb }{\sqrt{\conc}} \norm{x^{k+1}-x^k} \sqrt{\phi_{\B}(h(x^k)-h^*) }.
    \] 
    \revise{Let $A_k:=\{h(x^k)-h^*>0\}\in\cF_k$. Fix $\omega\in A_k$ outside a null set on which the preceding conditional descent inequality may fail. Applying Lemma~\ref{lem:rate-preliminary}\ref{rate-preliminary2} pointwise to the scalar quantities
    \[
        a=h(x^k(\omega))-h^*,\qquad
        \widetilde{a}
        =
        \Exp[h(x^{k+1})-h^*\mid\cF_k](\omega),
    \]
    and
    \[
        \Delta
        =
        \frac{\cona\conb}{\sqrt{\conc}}
        \norm{x^{k+1}(\omega)-x^k(\omega)}
    \]
    yields the inequality below at $\omega$. On the complement $A_k^c$, inequality~\eqref{eq:basic_1} and the nonnegativity of $h(x^{k+1}(\omega))-h^*$ imply that
    \[
        \norm{x^{k+1}(\omega)-x^k(\omega)}=0
        \quad\text{and}\quad
        h(x^{k+1}(\omega))-h^*=0.
    \]
    Since $A_k^c\in\cF_k$, the property of conditional expectation gives
    \[
        \mathbf{1}_{A_k^c}
        \Exp[h(x^{k+1})-h^*\mid\cF_k] 
        =
        \Exp\!\left[
            \mathbf{1}_{A_k^c}(h(x^{k+1})-h^*)
            \,\middle|\,\cF_k
        \right]
        =0.
    \]
    Therefore, the following inequality holds almost surely:}
    \[
    \norm{x^{k+1}-x^k} \leq \frac{\sqrt{\conc}}{\cona \conb } (\Phi_{\B}(h(x^k)-h^*) - \Phi_{\B}(\Exp[h(x^{k+1})-h^*\mid \cF_k])).
    \]
    Using the concavity of $ \Phi_{\B} $, we obtain $$\Phi_{\B}(\Exp[h(x^{k+1})-h^*\mid \cF_k]) \geq \Exp[\Phi_{\B}(h(x^{k+1})-h^*)\mid \cF_k]\quad\text{a.s.}.$$
    Thus, 
    \begin{equation}\label{ineq:prop:iter-global-exp-proof-1}
        \Exp[\norm{x^{k+1}-x^k}] \leq \frac{\sqrt{\conc}}{\cona \conb } (\Exp[\Phi_{\B}(h(x^k)-h^*)] - \Exp[\Phi_{\B}(h(x^{k+1})-h^*)]),
    \end{equation}
    which, together with Assumption~\ref{assmp:Global-EB}\ref{ass_g_E_3} and  the continuity of $\Phi_{\B}$, yields 
    $\sum_{k=0}^\infty \Exp[\norm{x^{k+1}-x^k}] < +\infty$. Therefore, there exists a random variable $ x^* $ such that $ x^k \to x^* $ almost surely and in $ L^1 $. Furthermore, 
    \begin{equation*}\label{eq:ite-rate-sure}
        \Exp[\norm{x^k-x^*}] \leq  \frac{\sqrt{\conc}}{\cona \conb }\Exp[\Phi_{\B}(h(x^k)-h^*)]\leq \frac{\sqrt{\conc}}{\cona \conb } \Phi_{\B}(\Exp[h(x^k)-h^*]),
    \end{equation*}
    where the first inequality follows from a telescoping trick on~\eqref{ineq:prop:iter-global-exp-proof-1} the second inequality follows directly from the concavity of $\Phi_{\B}$.
\end{proof}


\subsubsection{\revise{Almost-Sure convergence rates of $h(x^k)$ and $x^k$}}
\revise{
Almost-sure rates are also important, as they describe the convergence behavior along almost every realized sample path. To the best of our knowledge, no existing result directly addresses such rates in our setting. The closest result is \cite[Lemma~3.1]{weissmann2025almost}, which studies a recursion similar to \eqref{eq:condexp-descent} but with an additional additive error term. Its analysis is restricted to square-summable and polynomially decaying stepsizes. The resulting bounds are not sharp when the additive error is absent, which is the regime considered here. Nevertheless, tools developed in related contexts can be adapted to our setting. The following elementary lemma, which extends the block-maximal argument of \cite[Proposition~I.1(ii)]{omiya2026randomized}, identifies the almost-sure rate guarantees that can be deduced by combining a supermartingale property with an in-expectation rate. In the sequel, we use $Y_k$ to denote $h(x^k)-h^*$. However, the conclusions apply more generally to any nonnegative random sequence satisfying the assumed properties.}

\revise{\begin{lemma}[Expectation rate to almost-sure little-\(o\) rate]
\label{lem:exp-to-as-little-o}
    Let \(\{Y_k\}_{k\geq0}\) be a nonnegative supermartingale with respect to the filtration
    \(\{\cF_k\}_{k\geq0}\). Let \(\{r_k\}_{k\geq0}\) be a deterministic
    non-increasing sequence such that \(r_k>0\) and \(r_k\downarrow0\). Suppose
    that $\Exp[Y_k]\leq r_k$ for all $k\ge 0$.
    Let \(L:[1,\infty)\to(0,\infty)\) be non-decreasing and satisfy
    \[
        \sum_{n=1}^{\infty}\frac{1}{L(n)}<+\infty .
    \]
    Then, 
    \[
    Y_k=o\!\left(
        r_kL\!\left(\log\frac{1}{r_k}\right)
    \right)
    \qquad\text{almost surely}.
\]
\end{lemma}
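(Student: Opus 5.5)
The argument is a block-maximal one: use Doob's maximal inequality for nonnegative supermartingales on blocks of indices over which $r_k$ drops by roughly a factor $e$, and then apply Borel--Cantelli.

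\emph{Step 1 (block decomposition).} For $n\ge 1$ let $k_n:=\min\{k\ge 0: r_k\le e^{-n}\}$, which is finite since $r_k\downarrow 0$. The sequence $\{k_n\}$ is non-decreasing and tends to infinity. On the block $k_n\le k<k_{n+1}$ the definitions give
\[
e^{-(n+1)}<r_k\le e^{-n},
\]
and hence $\log(1/r_k)\in[n,n+1)$. Empty blocks, which occur when $k_n=k_{n+1}$, are simply skipped.

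\emph{Step 2 (maximal inequality).} Since $\{Y_k\}_{k\ge k_n}$ is a nonnegative supermartingale, Ville's (Doob's) maximal inequality gives, for every $\varepsilon>0$,
\[
\mathbb{P}\Bigl(\sup_{k\ge k_n}Y_k\ge \varepsilon\, e^{-(n+1)}L(n)\Bigr)\le \frac{\Exp[Y_{k_n}]}{\varepsilon e^{-(n+1)}L(n)}\le \frac{e^{-n}}{\varepsilon e^{-(n+1)}L(n)}=\frac{e}{\varepsilon L(n)}.
\]
The assumption $\sum_n 1/L(n)<\infty$ makes these probabilities summable, so Borel--Cantelli applies. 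For each fixed $\varepsilon=1/m$ there is therefore, almost surely, an index $N$ such that for all $n\ge N$ and all $k\in[k_n,k_{n+1})$,
\[
Y_k<\varepsilon e^{-(n+1)}L(n)\le \varepsilon\, r_k\,L\bigl(\log(1/r_k)\bigr).
\]
Here we used $e^{-(n+1)}<r_k$ and the fact that $L$ is non-decreasing with $\log(1/r_k)\ge n$. Intersecting over $m\in\mathbb{N}$ yields $\limsup_k Y_k/\bigl(r_kL(\log(1/r_k))\bigr)=0$ almost surely.

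\emph{Main obstacle.} The delicate points are the bookkeeping at the start of the sequence and ensuring that $L$ is evaluated only on its domain $[1,\infty)$. Indices $k$ with $r_k>e^{-1}$ are finitely many, and the little-$o$ statement ignores any finite set of indices. For those indices with $\log(1/r_k)<1$ we can also take $L$ as extended constantly, which does not affect the conclusion. One should also check that the block bounds hold with the correct direction of inequality, namely that $r_k$ is bounded below by $e^{-(n+1)}$ on block $n$. This follows from the minimality of $k_{n+1}$ together with the monotonicity of $r_k$.
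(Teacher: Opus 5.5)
Your proposal is correct and follows essentially the same route as the paper. Both use the same blocks $\{k: e^{-(n+1)}<r_k\le e^{-n}\}$, Doob's (Ville's) maximal inequality started at the first index of each block, Borel--Cantelli with $\varepsilon=1/m$, and the monotonicity of $L$ to pass from $L(n)$ to $L(\log(1/r_k))$; the only difference is cosmetic, in that your threshold $\varepsilon e^{-(n+1)}L(n)$ puts the factor $e$ into the probability bound $e/(\varepsilon L(n))$, whereas the paper carries it into the final comparison.
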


\begin{proof}
    Since \(r_k\downarrow0\), it suffices to consider all sufficiently large
    \(k\), and we may assume \(r_k\leq \exp(-1)\). For \(n\geq1\), define
    \[
        B_n\coloneqq
        \left\{
            k:\ \exp(-(n+1))<r_k\leq \exp(-n)
        \right\}.
    \]
    Each $B_n$ is finite because $r_k\downarrow 0$. Empty blocks are ignored. If \(B_n\neq\emptyset\), let \(m_n=\min B_n\)
    and set \(M_n=\max_{k\in B_n}Y_k\). For every \(\lambda>0\), Doob's
    maximal inequality for nonnegative supermartingales gives
    \[
        \mathbb P\left(
            \sup_{j\geq m_n}Y_j\geq\lambda
        \right)
        \leq
        \frac{\Exp[Y_{m_n}]}{\lambda}.
    \]
    Hence, for every \(\varepsilon>0\),
    \[
    \begin{aligned}
        \mathbb P\left(
            M_n\geq \varepsilon \exp(-n)L(n)
        \right)
        &\leq
        \frac{\Exp[Y_{m_n}]}{\varepsilon \exp(-n)L(n)}
        \leq
        \frac{r_{m_n}}{\varepsilon \exp(-n)L(n)}
        \leq
        \frac{1}{\varepsilon L(n)} ,
    \end{aligned}
    \]
    where the last inequality uses \(m_n\in B_n\), so \(r_{m_n}\leq \exp(-n)\).
    Since \(\sum_n1/L(n)<+\infty\), the Borel--Cantelli lemma implies that,
    for every fixed \(\varepsilon>0\), almost surely
    \(M_n<\varepsilon \exp(-n)L(n)\) for all sufficiently large nonempty blocks. Applying this conclusion with $\varepsilon=1/q$ for every
    $q\in\mathbb N$ and taking the intersection of the resulting
    probability-one events, we obtain
    \[
        \frac{M_n}{\exp(-n)L(n)}
        \longrightarrow0
        \qquad\text{almost surely}
    \]
    along the nonempty blocks. 

    Now let \(k\in B_n\). Then \(\exp(-(n+1))<r_k\leq \exp(-n)\), and hence
    \(\exp(-n)<er_k\). Moreover,
    \(n\leq\log(1/r_k)<n+1\). Since \(L\) is non-decreasing,
    \(L(n)\leq L(\log(1/r_k))\). Therefore,
    \[
        \frac{Y_k}{r_kL\!\left(\log\frac{1}{r_k}\right)}
        \leq
        e\,\frac{M_n}{\exp(-n)L(n)}
        \longrightarrow0
        \qquad\text{almost surely}.
    \]
    Since the block index $n$ tends to infinity as $k\rightarrow \infty$, the right-hand side converges to zero almost surely. This proves the claim.
\end{proof}}


\revise{The following example shows that an analysis relying solely on the conditional descent property~\eqref{eq:condexp-descent} cannot, in general, eliminate the additional loss in the almost-sure convergence rates. We will show later that this loss can be avoided by exploiting structures of MRAs.}

\revise{\begin{example}[Failure of endpoint almost-sure rates under conditional drift]
\label{exam:drift-no-endpoint-rate}
Consider a nonnegative adapted sequence satisfying a conditional descent inequality of the form~\eqref{eq:condexp-descent} with a power-type modulus $\phi_{\B}(t)\sim t^p$:
\[
    \Exp[Y_{k+1}\mid\cF_k]
    \leq
    Y_k-aY_k^p,
\]
where $a>0$, $p>1$, and $\alpha_k\equiv1$. Writing
$u_k:=\Exp[Y_k]$, taking expectations and applying Jensen's inequality give
\[
    u_{k+1}\leq u_k-au_k^p.
\]
A standard comparison argument therefore yields
\[
    \Exp[Y_k]
    \leq
    \left(u_0^{1-p}+a(p-1)k\right)^{-\frac1{p-1}}
    =
    O\left(k^{-\frac1{p-1}}\right).
\]
We next show that, even if $\{Y_k\}$ is pathwise non-increasing, the corresponding endpoint almost-sure rate $Y_k=O\left(k^{-\frac1{p-1}}\right)$ does not follow in general.

Let $y_j=2^{-j}y_0$ for $j\geq0$, where $y_0>0$ is sufficiently small that
$2ay_0^{p-1}\leq1/2$. Define a Markov chain $\{Y_k\}_{k\geq0}$ with
$Y_0=y_0$ as follows: whenever $Y_k=y_j$,
\[
    Y_{k+1}
    =
    \begin{cases}
        y_{j+1},&
        \text{with probability }2ay_j^{p-1},\\[1ex]
        y_j,&
        \text{with probability }1-2ay_j^{p-1}.
    \end{cases}
\]
Then
\[
\begin{aligned}
    \Exp[Y_{k+1}\mid\cF_k]
    &=
    (1-2ay_j^{p-1})y_j
    +
    2ay_j^{p-1}y_{j+1}\\
    &=
    y_j-ay_j^p
    =
    Y_k-aY_k^p.
\end{aligned}
\]
Moreover, $Y_{k+1}\leq Y_k$ almost surely. For each $j\geq0$, define the entrance time
\[
    \tau_j:=\inf\{k\geq0:Y_k=y_j\}
\]
and the holding time at level $y_j$ by
\[
    W_j:=\inf\{m\geq1:Y_{\tau_j+m}=y_{j+1}\}.
\]
Then $\{W_j\}_{j\geq0}$ are independent geometric random variables with
success probabilities $\theta_j:=2ay_j^{p-1}$. For $j\geq2$, set
\[
    m_j:=\left\lceil\frac{\log j}{\theta_j}\right\rceil,
    \qquad
    A_j:=\{W_j\geq m_j\}.
\]
Using $(1-x)^t\geq\exp(-tx/(1-x))$, we obtain
\[
    \mathbb P(A_j)=
    (1-\theta_j)^{m_j-1}\geq
    (1-\theta_j)^{\frac{\log j}{\theta_j}}\geq
    \exp\left(-\frac{\log j}{1-\theta_j}\right)=
    \frac1j
    \exp\left(-\frac{\theta_j\log j}{1-\theta_j}\right).
\]
Since $\theta_j\log j\to0$, there exists $c>0$ such that $\mathbb P(A_j)\geq\frac{c}{j}$ for all sufficiently large $j$. Hence
\[
    \sum_{j=2}^{\infty}\mathbb P(A_j)=+\infty.
\]
By independence and the second Borel--Cantelli lemma, $\mathbb P(A_j\ \mathrm{i.o.})=1$. Let $K_j:=\tau_j+m_j-1.$ On $A_j$, the chain remains at level $y_j$ until time $K_j$, so
$Y_{K_j}=y_j$. Moreover, since $\tau_j\geq j\geq1$,
\[
    K_j
    \geq
    \frac{\log j}{\theta_j}
    =
    \frac{\log j}{2a}y_j^{-(p-1)}.
\]
Consequently, on $A_j$,
\[
    K_j^{\frac1{p-1}}Y_{K_j}
    \geq
    \left(\frac{\log j}{2a}\right)^{\frac1{p-1}}
    \longrightarrow+\infty.
\]
Since $A_j$ occurs infinitely often almost surely,
\[
    \limsup_{k\to\infty}
    k^{\frac1{p-1}}Y_k
    =
    +\infty
    \qquad\text{almost surely}.
\]
\end{example}}

\revise{To the best of our knowledge, there is no systematic approach for deriving convergence rates of iterates based solely on the information provided by the MRA framework under the GUEB condition. In certain special settings, additional problem-specific structures enable a direct analysis of iterate convergence. For example, in common fixed point problems involving families of firmly nonexpansive operators, the operator structure provides stronger information beyond Assumption~\ref{assmp:basic}, allowing iterate rates to be established directly; see Section~\ref{section:RKMM}. However, such additional structures are generally unavailable in most optimization problems considered later in this paper.}

\subsubsection{\revise{Existing methods under local error bounds}}\label{sec:exist-LUEB}

\revise{The convergence analysis of algorithms with MRA-type structures under local
error-bound conditions has been studied primarily in the optimization
literature, especially for randomized coordinate descent (RCD) and its
variants under local Kurdyka--{\L}ojasiewicz (KL) conditions. For instance,
\cite{chouzenoux2016block, Xu2017Globally, latafat2022block,
latafat2022bregman} establish convergence rates for RCD-type methods
under local KL assumptions, but rely on the essential cyclicity assumption,
which requires every coordinate to be selected at least once within any
fixed number of consecutive iterations. As mentioned in the introduction,
this assumption is violated almost surely under the i.i.d.
sampling scheme commonly used in RCD variants.

More recently, local KL analyses without essential cyclicity have been
developed in~\cite{chorobura2023random, li2025smoothing}. Nevertheless, these results are based on a conditional descent lemma and establish only asymptotic convergence rates for the expected objective gap, up to an additive error of order $\sqrt{\delta}$, after an iteration threshold $k_\delta$, where $k_\delta\to\infty$ as $\delta\to0$. These results therefore do not provide explicit convergence rates. In another direction,
\cite{Ptracu2013EfficientRC} establishes asymptotic linear convergence of
the expected objective value under a local Lipschitzian error bound.
However, the analysis requires the time at which the iterates enter the
local error-bound neighborhood to be deterministic, namely dependent only
on the initial point rather than the random realization. This requirement
is automatically satisfied for global error bounds but is generally
unavailable under genuinely local assumptions.

These limitations also raise a more fundamental question about the role of expectation-based analysis under local error-bound conditions. Since local error bounds are only valid after the iterates enter a neighborhood of the solution set, convergence estimates in this setting are inherently asymptotic rather than nonasymptotic. Consequently, the usual advantage of in-expectation analysis---providing explicit average-case performance guarantees at a prescribed iteration---is largely diminished. In the asymptotic setting, almost-sure convergence rates become more informative, as they characterize the behavior along almost every realized trajectory rather than merely the average behavior. This motivates the sample-path analysis aimed at establishing almost-sure convergence rates for both objective values and iterates; see Section~\ref{sec:localEB}.

}

\section{\revise{Main Results}}

\subsection{\revise{A new analytical framework based on an almost-sure descent lemma}}

\revise{To overcome these limitations, we develop a new analytical framework for MRAs based on the following almost-sure descent lemma, which allows us to separate the stochastic components from the deterministic descent mechanism.}

\revise{\begin{lemma}\label{lem:pre-stochastic}
    Suppose that Assumption~\ref{assmp:basic} holds. Then, for any $k\ge 0$, there exists a $\cF_{k+1}$-measurable random variable $z_k$ such that
    \begin{equation}\label{eq:Random-Descent}
    h(x^{k+1})\le h(x^k) - \cona  \alpha_k\, z_k\,  g(x^k),
    \end{equation}
    where $0\le z_k \le c_3$ and $\Exp[z_k|\cF_k]\ge c_2$ almost surely.
\end{lemma}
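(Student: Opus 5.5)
The natural candidate is the normalized step length
\[
    z_k := \frac{\|x^{k+1}-x^k\|^2}{\alpha_k^2\, g(x^k)}\quad\text{on } \{g(x^k)>0\},
    \qquad
    z_k := \conc \ (\text{or any constant in } [\conb,\conc]) \quad\text{on } \{g(x^k)=0\}.
\]
Since $x^k, x^{k+1}$ are $\cF_{k+1}$-measurable (the sequence is adapted), $g$ is continuous, and $\alpha_k$ is deterministic, $z_k$ is $\cF_{k+1}$-measurable. Moreover, $\{g(x^k)=0\}\in\cF_k$.

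\emph{Descent inequality.} On $\{g(x^k)>0\}$, \eqref{eq:basic_1} reads
\[
    h(x^{k+1})\le h(x^k)-\frac{\cona}{\alpha_k}\|x^{k+1}-x^k\|^2 = h(x^k)-\cona\alpha_k z_k g(x^k).
\]
On $\{g(x^k)=0\}$, the right-hand side of \eqref{eq:Random-Descent} is just $h(x^k)$. Inequality \eqref{eq:basic_1} gives $h(x^{k+1})\le h(x^k)$, so \eqref{eq:Random-Descent} holds there as well.

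\emph{Bounds $0\le z_k\le\conc$.} On $\{g(x^k)>0\}$ these follow directly from \eqref{eq:basic_4}. On the complement they hold by the choice of constant.

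\emph{Conditional expectation.} This is the only step needing care. Write $G=\{g(x^k)>0\}\in\cF_k$. Because $\alpha_k^2 g(x^k)$ is $\cF_k$-measurable and strictly positive on $G$, we can pull it out of the conditional expectation:
\[
    \mathbf 1_G\,\Exp[z_k\mid\cF_k]
    = \mathbf 1_G\,\frac{\Exp[\|x^{k+1}-x^k\|^2\mid\cF_k]}{\alpha_k^2 g(x^k)}
    \ge \conb\,\mathbf 1_G \quad\text{a.s.}
\]
The last inequality is \eqref{eq:expbound}. To justify the pull-out, note that $z_k$ is bounded, so $z_k\mathbf 1_G$ is integrable. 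Then use $\|x^{k+1}-x^k\|^2\mathbf 1_G = z_k\,\alpha_k^2 g(x^k)\mathbf 1_G$ together with the "taking out what is known" property for nonnegative variables, which holds even without integrability of the product.

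On $G^c$ we have $\mathbf 1_{G^c}\Exp[z_k\mid\cF_k]=\Exp[\conc\mathbf 1_{G^c}\mid\cF_k]=\conc\mathbf 1_{G^c}\ge\conb\mathbf 1_{G^c}$, using $\conc\ge\conb$. Adding the two pieces gives $\Exp[z_k\mid\cF_k]\ge\conb$ almost surely.

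The main obstacle is this measure-theoretic step: handling the division by an $\cF_k$-measurable quantity and the degenerate set $\{g(x^k)=0\}$. The definition by cases above is designed to resolve exactly that.
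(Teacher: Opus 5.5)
Your proposal is correct and matches the paper's proof: the same normalized step length $z_k=\|x^{k+1}-x^k\|^2/(\alpha_k^2 g(x^k))$ on $\{g(x^k)>0\}$, a constant on the degenerate set, and the same pull-out argument for the conditional bound via \eqref{eq:expbound}. The paper uses the constant $\conb$ where you use $\conc$, which works equally well since $\conc\ge\conb$. On $\{g(x^k)=0\}$ the paper invokes \eqref{eq:basic_4} to get $x^{k+1}=x^k$, so that $\|x^{k+1}-x^k\|^2=z_k\alpha_k^2 g(x^k)$ holds everywhere, whereas you use the monotonicity from \eqref{eq:basic_1} directly; either suffices.
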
}

\revise{\begin{proof}
Define
\[
z_k = \begin{cases}
    \frac{\norm{x^{k+1}-x^k}^2}{\alpha_k^2 g(x^k)}, & \text{if } g(x^k) > 0,\\
    \conb, & \text{if } g(x^k) = 0.
\end{cases}
\]
By construction, $z_k$ is a nonnegative $\cF_{k+1}$-measurable random variable. Moreover, since \eqref{eq:basic_4} implies that $\norm{x^{k+1}-x^k}=0$ whenever $g(x^k)=0$, the identity $\norm{x^{k+1}-x^k}^2 = z_k\, \alpha_k^2 \, g(x^k)$ holds almost surely. It also follows from \eqref{eq:basic_4} that $z_k \leq \conc$ almost surely. Combining the preceding identity with Assumption~\ref{assmp:basic}\ref{basic_1}, we obtain \eqref{eq:Random-Descent}. It remains to verify the conditional lower bound on $z_k$. Since $g(x^k)$ is $\cF_k$-measurable, we have almost surely that
\[
\begin{aligned}
\Exp[z_k|\cF_k] 
= \frac{\Exp[\norm{x^{k+1}-x^k}^2\mid\cF_k]}{\alpha_k^2 g(x^k)}\mathbf{1}_{\{g(x^k)>0\}} + \conb\mathbf{1}_{\{g(x^k) = 0\}} 
\ge \conb \left(\mathbf{1}_{\{g(x^k)>0\}}  + \mathbf{1}_{\{g(x^k) = 0\}}\right) = \conb,
\end{aligned}
\]
where the inequality follows from \eqref{eq:expbound}. This completes the proof.
\end{proof}}

\revise{Lemma~\ref{lem:pre-stochastic} suggests a two-stage route to the convergence analysis. First, along each realization, we regard the variables $\{z_k\}$ as a fixed parameter sequence and derive pathwise convergence bounds in terms of their weighted partial sums. We then control these random bounds using the special random structure of $\{z_k\}$. For simplicity, we illustrate this strategy under the GUEB condition. Lemma~\ref{lem:pre-stochastic} and Assumption~\ref{assmp:Global-EB} yield
\begin{equation}\label{eq:Descent-Global}
    h(x^{k+1})-h^*
    \leq
    h(x^k)-h^*
    -\cona\alpha_k z_k\,
    \phi_{\B}\bigl(h(x^k)-h^*\bigr).
\end{equation}
Applying Lemma~\ref{lem:rate-preliminary}\ref{rate-preliminary1} to
\eqref{eq:Descent-Global}, we obtain
\begin{equation}\label{eq:Descent-Global1}
    \Psi_{\B}\bigl(h(x^{k+1})-h^*\bigr)
    \geq
    \Psi_{\B}\bigl(h(x^k)-h^*\bigr)
    +\cona\alpha_k z_k\,
    \mathbf{1}_{\{h(x^k)>h^*\}}.
\end{equation}
Thus, pathwise convergence is governed by the accumulated random descent
$\sum_{i<k}\alpha_i z_i$. In the common regime where $1/\phi_{\B}$ is
not integrable near $0$, this observation yields the following lemma.}

\begin{lemma}\label{lem:random-convergence}
    Suppose that Assumptions~\ref{assmp:basic} and~\ref{assmp:Global-EB} hold and ${1}/{\phi_{\B}}$ is not integrable near $0$. Then,
    \begin{equation}\label{eq:random-convergence}
    h(x^k)-h^* \leq \Psi_{\B}^{-1}\left(\Psi_{\B}(h(x^0)-h^*)+\revise{\cona}\sum_{i=0}^{k-1} \alpha_i z_i\right)\quad \forall k \ge 1.
    \end{equation}
\end{lemma}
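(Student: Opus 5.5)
The plan is a pathwise telescoping of \eqref{eq:Descent-Global1}, followed by inversion of $\Psi_{\B}$. Fix a realization outside the null set on which Lemma~\ref{lem:pre-stochastic} fails, and write $r_k:=h(x^k)-h^*$. By Assumption~\ref{assmp:Global-EB}\ref{ass_g_E_3}, $r_k\in[0,\tau)$. By \eqref{eq:basic_1}, the sequence $\{r_k\}$ is non-increasing, and it stays at $0$ once it hits $0$. Indeed, if $r_k=0$, then $0\le r_{k+1}\le -\frac{\cona}{\alpha_k}\|x^{k+1}-x^k\|^2$, which forces $x^{k+1}=x^k$. Since $1/\phi_{\B}$ is not integrable near $0$, Lemma~\ref{lem:functions}\ref{lem_func_1} gives $\Psi_{\B}(0)=+\infty$. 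Moreover, $\Psi_{\B}$ is strictly decreasing on $[0,\tau)$, because its integrand $1/\phi_{\B}$ is positive there. Hence $\Psi_{\B}^{-1}$ is well defined on $[\Psi_{\B}(t),+\infty]$ for every $t\in(0,\tau)$, with $\Psi_{\B}^{-1}(+\infty)=0$ by the remark after Lemma~\ref{lem:functions}.

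\textbf{Case $r_0=0$.} Then $r_k=0$ for all $k$. The right-hand side of \eqref{eq:random-convergence} equals $\Psi_{\B}^{-1}(+\infty)=0$, so the claim holds with equality. This relies on reading $\Psi_{\B}(0)=+\infty$ together with the convention $\Psi_{\B}^{-1}(+\infty)=0$.

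\textbf{Case $r_0>0$.} Let $K$ be the first index with $r_K=0$, with $K=\infty$ if no such index exists. For $i<K$, Lemma~\ref{lem:pre-stochastic} and GUEB give $r_{i+1}\le r_i-\cona\alpha_i z_i\phi_{\B}(r_i)$, with $r_i\in(0,\tau)$ and $\cona\alpha_iz_i\ge0$.
\begin{itemize}
\item If $r_{i+1}>0$, Lemma~\ref{lem:rate-preliminary}\ref{rate-preliminary1} applied with $b=\cona\alpha_i z_i$ yields $\Psi_{\B}(r_{i+1})\ge\Psi_{\B}(r_i)+\cona\alpha_iz_i$.
\item If $r_{i+1}=0$, the same inequality holds trivially, since $\Psi_{\B}(0)=+\infty$.
\end{itemize}
Telescoping from $i=0$ to $k-1$ gives, for every $k\le K$,
\[
\Psi_{\B}(r_k)\ge\Psi_{\B}(r_0)+\cona\sum_{i=0}^{k-1}\alpha_iz_i .
\]
Since $\Psi_{\B}$ is decreasing, applying $\Psi_{\B}^{-1}$ yields \eqref{eq:random-convergence} for $k\le K$. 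For $k>K$ we have $r_k=0$, and the bound holds trivially because $\Psi_{\B}^{-1}$ takes nonnegative values.

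\textbf{Main obstacle.} The argument is purely deterministic, so the delicate part is bookkeeping at the boundary. One must handle the value $+\infty$ in $\Psi_{\B}(0)$ and in its inverse. One must also check that the argument $\Psi_{\B}(r_0)+\cona\sum_i\alpha_iz_i$ lies in the range of $\Psi_{\B}$. It does: it is at least $\Psi_{\B}(r_0)$, and the range contains $[\Psi_{\B}(r_0),+\infty]$ because $\Psi_{\B}$ is continuous on $(0,\tau)$ and diverges at $0$. Finally, the almost-sure qualifier comes only from the null set excluded in Lemma~\ref{lem:pre-stochastic}, on which $z_k$ could violate $0\le z_k\le\conc$ or the descent inequality.
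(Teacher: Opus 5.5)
Your proposal is correct and is essentially the paper's proof: fix a sample path, telescope the $\Psi_{\B}$-transformed descent inequality \eqref{eq:Descent-Global1} over the indices where $h(x^i)>h^*$, then invert the decreasing function $\Psi_{\B}$, with the already-stationary case being trivial. Your explicit bookkeeping (the first hitting index and the conventions $\Psi_{\B}(0)=+\infty$, $\Psi_{\B}^{-1}(+\infty)=0$) makes rigorous the boundary step where $h(x^k)$ reaches $h^*$, which the paper passes over quickly.
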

\begin{proof}
    Given any $\omega \in \Omega$ and integer $k >0$, if $h(x^{k-1}(\omega)) = h^*$, then \eqref{eq:random-convergence} holds trivially. Otherwise, if $h(x^{k-1}(\omega)) > h^*$, it follows that $h(x^i(\omega)) > h^*$ for all $i \leq k - 1$. By inequality~\eqref{eq:Descent-Global1}, we have that for all $0\leq i\leq k-1$,
    \[
    \Psi_{\B}(h(x^{i+1}(\omega))-h^*) \geq \Psi_{\B}(h(x^i(\omega))-h^*) + \revise{\cona} \alpha_i z_i(\omega).
    \]
    Summing both sides of this inequality from $i = 0$ to $k - 1$ yields
    \[
    \Psi_{\B}(h(x^{k}(\omega))-h^*) \geq \Psi_{\B}(h(x^0(\omega))-h^*) + \revise{\cona} \sum_{i=0}^{k-1} \alpha_i z_i(\omega),
    \]
    which implies \eqref{eq:random-convergence}.
\end{proof}

\revise{Using GUEB as a representative setting, our framework consists of four steps:
\begin{enumerate}[label=(\arabic*)]
    \item Derive the almost-sure descent inequality~\eqref{eq:Random-Descent}.
    \item Obtain the pathwise bound~\eqref{eq:random-convergence} for $h(x^k)$ in terms of the realized sequence $\{z_i\}_{i\geq0}$.
    \item\label{framework-step3} Establish in-expectation and almost-sure upper bounds for the random variable
    \[
        \Psi_{\B}^{-1}\left(
            \Psi_{\B}\bigl(h(x^0)-h^*\bigr)
            +\cona\sum_{i=0}^{k-1}\alpha_i z_i
        \right).
    \]
    \item\label{framework-step4} Transfer the resulting rates for $h(x^k)-h^*$ to rates for the iterates $\{x^k\}$.
\end{enumerate}
The first two steps have been completed above, while Steps~\ref{framework-step3} and~\ref{framework-step4} are developed in Section~\ref{sec:globalEB}. A key advantage of our approach is that it largely separates the deterministic error-bound analysis from the probabilistic control of the accumulated random descent. As a result, the distinction between the global and local error-bound settings becomes substantially narrower than in approaches centered on conditional-expectation recursions. Although the LUEB case still requires additional localization arguments, most of the tools developed for almost-sure rates under GUEB remain applicable.}

\subsection{Convergence analysis under global error bounds}\label{sec:globalEB}

We first present the case where ${1}/{\phi_{\B}}$ is integrable near $0$.

\begin{theorem}\label{thm:global-finite-terminate}
    Suppose that Assumptions \ref{assmp:basic}, \ref{assmp:step-size} and \ref{assmp:Global-EB} hold and that ${1}/{\phi_{\B}}$ is integrable near $0$. Then, there exist an almost surely finite random index $K_1$ and a random variable $x^*$ such that almost surely, $ x^*\in \arg\min_{x \in \B} h(x)$ and $x^k = x^*$ for all $k \geq K_1$.
\end{theorem}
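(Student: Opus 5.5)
The plan is to work pathwise with the almost-sure descent inequality of Lemma~\ref{lem:pre-stochastic} and the $\Psi_\B$-recursion \eqref{eq:Descent-Global1}, which is valid for any integrability regime.

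Since $1/\phi_\B$ is integrable near $0$, Lemma~\ref{lem:functions}\ref{lem_func_1} gives $\Psi_\B(0)<+\infty$. Lemma~\ref{lem:rate-preliminary}\ref{rate-preliminary1} also covers the case $\tilde a=0$. Hence, for every $k$ with $h(x^k)>h^*$, we get $\Psi_\B(h(x^{k+1})-h^*)\ge \Psi_\B(h(x^k)-h^*)+\cona\alpha_k z_k$. Here we use $h(x^k)-h^*<\tau$ from Assumption~\ref{assmp:Global-EB}\ref{ass_g_E_3}, and $\Psi_\B$ is defined at $0$. Fix $\omega$ and suppose $h(x^k(\omega))>h^*$ for all $k$. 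Summing the inequality and using $\Psi_\B\le \Psi_\B(0)$ on $[0,\tau)$ gives
\[
\cona\sum_{i=0}^{\infty}\alpha_i z_i(\omega)\le \Psi_\B(0)-\Psi_\B(h(x^0)-h^*)<+\infty .
\]
So it suffices to show that $\sum_i \alpha_i z_i=+\infty$ almost surely.

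This divergence is the main probabilistic step, and I expect it to be the main obstacle. The $z_i$ are not independent; we only know $0\le z_i\le\conc$ and $\Exp[z_i\mid\cF_i]\ge\conb$. I would use the conditional Borel--Cantelli / Lévy extension, or more concretely a martingale argument. Set $M_k=\sum_{i<k}\alpha_i(z_i-\Exp[z_i\mid\cF_i])$. Then $M_k$ is a martingale with increments bounded by $\alpha_i\conc\le\bar\alpha\conc$, and its predictable quadratic variation satisfies $\langle M\rangle_k\le \conc^2\sum_{i<k}\alpha_i^2\le \bar\alpha\conc^2 A_k$. By the strong law for martingales, on $\{\langle M\rangle_\infty=\infty\}$ we have $M_k/\langle M\rangle_k\to0$, hence $M_k=o(A_k)$. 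On $\{\langle M\rangle_\infty<\infty\}$, $M_k$ converges. In both cases
\[
\sum_{i<k}\alpha_i z_i\ge \conb A_k+M_k\to+\infty
\]
almost surely, since $A_k\to\infty$ by Assumption~\ref{assmp:step-size}. An alternative is to apply Lemma~\ref{lem:g^k_vanishes}-type reasoning with Lévy's extension of Borel--Cantelli to the events $\{z_i\ge \conb/2\}$. Bounded $z_i$ with conditional mean $\ge\conb$ give $\mathbb P(z_i\ge\conb/2\mid\cF_i)\ge \conb/(2\conc)$, though weighting by $\alpha_i$ makes the martingale route cleaner.

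On the resulting probability-one event we obtain a contradiction, so there is a finite $K_1(\omega)$ with $h(x^{K_1})=h^*$; take the first such index, which is measurable as a hitting time. For $k\ge K_1$, \eqref{eq:basic_1} and $h\ge h^*$ on $\B$ force $h^*\le h(x^{k+1})\le h^*-\frac{\cona}{\alpha_k}\|x^{k+1}-x^k\|^2$. Hence $x^{k+1}=x^k$, and by induction $x^k=x^{K_1}$ for all $k\ge K_1$. Defining $x^*:=x^{K_1}$ (measurable as $\lim_k x^k$), we have $x^*\in\B$ by Assumption~\ref{assmp:basic}\ref{basic_2}, and $h(x^*)=h^*$, so $x^*\in\argmin_{x\in\B}h(x)$.
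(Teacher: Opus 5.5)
Your proof is correct, but it takes a different route from the paper's.

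You argue pathwise. On the event that $h(x^k)>h^*$ for all $k$, summing \eqref{eq:Descent-Global1} bounds $\cona\sum_i\alpha_i z_i$ by $\Psi_{\B}(0)-\Psi_{\B}(h(x^0)-h^*)$. You then rule this event out by showing $\sum_i\alpha_i z_i=+\infty$ almost surely via a martingale strong law. That divergence step is exactly what Proposition~\ref{prop:as-bound} gives (with $X_k=z_k$, $a=\conb$, $b=\conc$), so you could simply cite it. Your argument is also the one the paper uses for the integrable case of the local result, Theorem~\ref{thm:local-EB}(i).

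The paper instead takes expectations first. Conditioning \eqref{eq:Descent-Global1} on $\cF_k$ and using $\Exp[z_k\mid\cF_k]\ge\conb$ gives
$\Exp[\Psi_{\B}(h(x^{k+1})-h^*)]\ge\Exp[\Psi_{\B}(h(x^k)-h^*)]+\cona\conb\alpha_k\mathbb P(h(x^k)>h^*)$.
Telescoping against $\Psi_{\B}\le\Psi_{\B}(0)$ yields $\Exp\bigl[\sum_k\alpha_k\mathbf 1_{\{h(x^k)>h^*\}}\bigr]<+\infty$. The indicator is non-increasing in $k$ and $\sum_k\alpha_k=+\infty$, so it must vanish eventually.

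The two routes buy different things:
\begin{itemize}
\item The paper's route is more elementary. It needs no martingale limit theorem and uses only the conditional-mean bound on $z_k$, not $z_k\le\conc$ or $\alpha_k\le\bar\alpha$.
\item It is also quantitative. Since $\sum_k\alpha_k\mathbf 1_{\{h(x^k)>h^*\}}=A_{K_1}$ for the hitting time $K_1$, it gives $\Exp[A_{K_1}]\le(\Psi_{\B}(0)-\Psi_{\B}(h(x^0)-h^*))/(\cona\conb)$. For constant stepsizes this is a bound on $\Exp[K_1]$.
\item Your route yields only qualitative almost-sure finiteness. However, it stays within the paper's pathwise framework and carries over verbatim to the local setting, where unconditional expectation arguments are hampered by the random entry time into the error-bound neighborhood.
\end{itemize}
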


\begin{proof}
Taking conditional expectations on both sides of \eqref{eq:Descent-Global1} yields
\[
\Exp[\Psi_{\B}(h(x^{k+1})-h^*)\mid \cF_k] \geq \Psi_{\B}(h(x^k)-h^*) + \revise{\cona} \alpha_k\, \Exp[z_k\mid \cF_k] \mathbf{1}_{\{h(x^k)>h^*\}}.
\]
Since $ \mathbb{E}[z_k \mid \mathcal{F}_k] \geq \revise{\conb} $ almost surely, it follows that
\[
\Exp[\Psi_{\B}(h(x^{k+1})-h^*)] \geq \Exp[\Psi_{\B}(h(x^k)-h^*)] + \revise{\cona\conb} \alpha_k\, \Exp[\mathbf{1}_{\{h(x^k)>h^*\}}].
\]
 Since $ \mathbb{E}[\Psi_{\B}(h(x^k) - h^*)] $ is non-decreasing with respect to $ k $ and $ \Psi_{\B}(t) \leq \Psi_{\B}(0) < +\infty $ for all $ t \in [0, \tau) $, it follows that
    \[
    \Exp\left[ \sum_{k=0}^\infty \alpha_k \mathbf{1}_{\{h(x^k)>h^*\}} \right] = \sum_{k=0}^\infty \alpha_k\, \Exp \left[ \mathbf{1}_{\{h(x^k)>h^*\}} \right] \leq 
    \revise{\frac{\Psi_{\B}(0)-\Psi_{\B}(h(x^0)-h^*)}{\cona\conb}}  < +\infty,
    \]
    which implies that $ \sum_{k=0}^\infty \alpha_k \mathbf{1}_{\{h(x^k) > h^*\}} < \infty $ almost surely. Combined with $ \sum_k \alpha_k = +\infty $ and the non-increasing property of $ h(x^k) $, we conclude that $ h(x^k) $ reaches $ h^* $ in a finite number of iterations almost surely. According to Assumption~\ref{assmp:basic}\ref{basic_1}, once $h(x^k)$ attains $h^*$, the algorithm terminates in finitely many iterations\footnote{Given any $\omega \in \Omega$, we say that the sequence $\{x^k(\omega)\}_k$ terminates in finitely many iterations if there exists some $K(\omega) \in \mathbb{N}$ such that $x^k(\omega) = x^K(\omega)$ for all $k\geq K(\omega)$.} starting from iteration $k$. This completes the proof.
\end{proof}

We now consider the case where ${1}/{\phi_{\B}}$ is not integrable near $0$. 
\begin{theorem}\label{th_general_exp_rate1}
    Suppose that Assumptions \ref{assmp:basic}, \ref{assmp:step-size} and \ref{assmp:Global-EB} hold and ${1}/{\phi_{\B}}$ is not integrable near $0$. \revise{Denote $\mathfrak{h}:=\frac{1-\log 2}{2}$.}
     \begin{enumerate}[label=\textup{\textrm{(\roman*)}}]
        \item\label{Th_exp_1} For all $k \ge 1$, 
        \begin{equation}\label{eq:hxk-rate-exp}
            \Exp[h(x^k)-h^*] \le \exp\! \left(\!\revise{-\mathfrak{h}\frac{c_2}{c_3\bar\alpha}A_k}\right) (h(x^0)-h^*) + \Psi_{\B}^{-1}\!\!\prt{\! \Psi_{\B}(h(x^0) \! - \! h^*)+\revise{\frac{\cona \conb}{2} A_k} \!}.
        \end{equation}
        Suppose additionally that $1/\sqrt{\phi_{\B}}$ is integrable near $0$. Then, there exists a random variable $x^*$ such that $x^k$ converges to $x^*$ both almost surely and in $L^1$, that $x^* \in \argmin_{x\in \B} h(x)$ almost surely, and that 
        \begin{equation}\label{eq:ite-rate-exp_Th3}
            \Exp[\norm{x^k-x^*}] \leq \frac{\sqrt{\conc}}{\cona \conb }\Phi_{\B}(\Exp[h(x^k)-h^*])\quad\forall k\ge 0 .
        \end{equation}
        \item\label{Th_exp_2} There exists an almost surely finite random index $K_2$ such that 
        \begin{equation}\label{eq:global-hxk-asrate}
            h(x^{k})-h^* \leq \Psi_{\B}^{-1} \left( \revise{\frac{\cona \conb}{2} A_k} \right) \quad \forall k \ge K_2\  \as.
        \end{equation}
        Suppose additionally that $1/\sqrt{\phi_{\B}}$ is integrable near $0$. Then, for the limit $x^*$ established in \ref{Th_exp_1}, there exists an almost surely finite random index $K_3$ such that
        \begin{equation}\label{eq:global-ite-asrate}
            \norm{x^k-x^*} \leq \revise{\frac{\sqrt{2}}{\cona\sqrt{\conb}}} \Phi_{\B} \left( \Psi_{\B}^{-1} \left( \revise{\frac{\cona \conb}{2} A_k} \right) \right) \quad \forall k \ge K_3\  \as.
        \end{equation}
        \end{enumerate}
\end{theorem}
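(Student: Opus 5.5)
Everything rests on the pathwise bound of Lemma~\ref{lem:random-convergence}, $h(x^k)-h^*\le \Psi_{\B}^{-1}(\Psi_{\B}(h(x^0)-h^*)+\cona S_k)$ with $S_k:=\sum_{i<k}\alpha_i z_i$. Only the random sum $S_k$ needs probabilistic control. Write $\bar S_k:=\cona\conb A_k/2$; this is half of the conditional mean, since $\Exp[z_i\mid\cF_i]\ge\conb$. The key estimate is a concentration bound for the lower tail of $S_k$. Set $w_i:=\alpha_i z_i/(\conc\bar\alpha)\in[0,1]$ and $\mu_i:=\Exp[w_i\mid\cF_i]\ge \conb\alpha_i/(\conc\bar\alpha)$. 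Because $e^{-x}\le 1-(1-e^{-1})x$ on $[0,1]$, we get $\Exp[e^{-w_i}\mid\cF_i]\le \exp(-(1-e^{-1})\mu_i)$. A Chernoff/supermartingale argument in the style of Freedman's inequality for bounded increments, optimized over the tilt $\lambda$ with $\lambda=\log 2$ type choices, should then give
\[
\mathbb P\Bigl(S_k\le \tfrac{\conb}{2}A_k\Bigr)\le \exp\Bigl(-\mathfrak h\,\tfrac{\conb}{\conc\bar\alpha}A_k\Bigr).
\]
The constant $\mathfrak h=(1-\log 2)/2$ is exactly what the multiplicative Chernoff bound yields at deviation factor $1/2$, since $(1-\delta)\log(1-\delta)+\delta$ at $\delta=1/2$ equals $(1-\log 2)/2$. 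I would derive it by showing $\exp(\lambda\sum (\mu_i-w_i))$ times the compensator is a supermartingale, and then apply Markov's inequality.

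For part~(i), split $\Exp[h(x^k)-h^*]$ over the event $\{S_k\le \conb A_k/2\}$ and its complement. On the bad event, use the trivial bound $h(x^k)-h^*\le h(x^0)-h^*$, which holds by monotonicity \eqref{eq:basic_1}. On the complement, use the pathwise bound together with monotonicity of $\Psi_{\B}^{-1}$, which is decreasing. This gives \eqref{eq:hxk-rate-exp}. The iterate statement \eqref{eq:ite-rate-exp_Th3} and the $L^1$/a.s.\ convergence follow verbatim from Proposition~\ref{prop:iter-global-exp}. It remains to show $x^*\in\argmin h$: $h(x^k)\to h^*$ a.s.\ follows from (ii), and then we pass to the limit using continuity of $h$ and closedness of $\B$.

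For part~(ii), the tail bound gives $\sum_k \mathbb P(S_k\le\conb A_k/2)$. This is summable only if $A_k$ grows fast enough, which is not guaranteed for small stepsizes. That is the main obstacle. I would handle it by blocking: pick indices $k_n$ with $A_{k_n}\approx n\conc\bar\alpha/\conb$. Borel--Cantelli along $k_n$ gives eventually $S_{k_n}\ge \conb A_{k_n}/2$ along the subsequence. For intermediate $k$, interpolate using monotonicity of $S_k$ and $A_{k_{n+1}}-A_{k_n}=O(1)$. The slack needed here is absorbed by proving the tail bound with a factor slightly above $1/2$; alternatively, since $\Psi_{\B}(h(x^0)-h^*)$ may be negative, the bounded shift is absorbed once $A_k$ is large. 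Dropping the $\Psi_{\B}(h(x^0)-h^*)$ term needs care, and I would choose $t_0$ or simply use that $\Psi_{\B}(h(x^0)-h^*)\ge \Psi_{\B}(t_0)=0$ for $h(x^0)-h^*\le t_0$. These steps yield \eqref{eq:global-hxk-asrate}.

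For \eqref{eq:global-ite-asrate}, I need a pathwise length estimate, so the expectation argument is not enough. From \eqref{eq:basic_1} and \eqref{eq:basic_4} one obtains $r_k-r_{k+1}\ge \cona\|x^{k+1}-x^k\|^2/\alpha_k$. Pathwise, this only links length to $\sqrt{z_k}$, which gives $\|x^{k+1}-x^k\|\le \tfrac{1}{\cona\sqrt{z_k}}(\Phi(r_k)-\Phi(r_{k+1}))$, and that fails when $z_k$ is small. Instead I would apply Lemma~\ref{lem:exp-to-as-little-o}-style conditioning to the martingale $\sum_{j\ge k}(\|x^{j+1}-x^j\|-\Exp[\cdot\mid\cF_j])$. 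I expect the constant $\sqrt2/(\cona\sqrt{\conb})$ to come from the a.s.\ rate for $h$ combined with a Cauchy--Schwarz tail estimate $\sum_{j\ge k}\|x^{j+1}-x^j\|\le(\sum\alpha_j z_j\phi(r_j))^{1/2}\cdots$. This last transfer is the step I am least sure of.
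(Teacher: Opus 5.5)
\paragraph{Parts that are correct, by a different route.}
Your arguments for part (i) and for the $h$-rate \eqref{eq:global-hxk-asrate} in part (ii) are sound, but they differ from the paper's.

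For (i), the paper first dominates $S_k$ in convex order by an independent Bernoulli sum (Lemma~\ref{lem:convex-Bernoulli-comparison}), which uses convexity of $\Psi_{\B}^{-1}$ (automatic here), and then applies Chernoff to that sum. You apply Chernoff directly to the dependent sum. This works cleanly. Since the lower bound $\conb\alpha_i/(\conc\bar\alpha)$ on $\Exp[w_i\mid\cF_i]$ is deterministic, the inequality $e^{-\lambda x}\le 1-(1-e^{-\lambda})x$ on $[0,1]$ together with plain iterated conditioning at $\lambda=\log 2$ gives exactly $\exp(-\mathfrak h\tfrac{\conb}{\conc\bar\alpha}A_k)$. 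No Freedman-type machinery is needed. Your split on the bad event uses only monotonicity of $\Psi_{\B}^{-1}$ and $h(x^k)\le h(x^0)$, not convexity.

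For \eqref{eq:global-hxk-asrate}, the paper uses the martingale strong law (Proposition~\ref{prop:as-bound}). Your alternative also works: a Chernoff bound at level $\theta\conb A_k$ with $\theta\in(1/2,1)$, Borel--Cantelli along indices with $A_{k_n}\approx nD$, and monotone interpolation. It has the bonus of an exponential tail for the random index. The slack $\theta>1/2$ is indispensable, however. It must absorb both the $O(1)$ interpolation loss and $\Psi_{\B}(h(x^0)-h^*)$, playing the role of the paper's $3/4$ versus $1/2$. So commit to it rather than to choosing $t_0$, which is fixed in the statement.

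\paragraph{The gap: the almost-sure iterate rate.}
The genuine gap is \eqref{eq:global-ite-asrate}. A martingale on step lengths with Lemma~\ref{lem:exp-to-as-little-o}-type conditioning is not a worked argument, and at best it would reintroduce the $L(\log(1/r_k))$ loss that the theorem is designed to avoid. Your Cauchy--Schwarz guess is left unspecified and still carries $z_j$, which is exactly what breaks the pathwise argument.

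The missing idea (Proposition~\ref{prop:iter-global-as}) is to discard \eqref{eq:expbound}, \eqref{eq:basic_4} and $z_k$ altogether, and use only \eqref{eq:basic_1} plus the already-established a.s.\ rate for $h$. Set $r_k:=h(x^k)-h^*$, $\epsilon:=\cona\conb/2$ and $\beta_k:=\sqrt{\cona\epsilon\,\phi_{\B}(r_k)}$. Then
\[
\|x^{k+1}-x^k\|\le\sqrt{\tfrac{\alpha_k}{\cona}(r_k-r_{k+1})}\le\frac{\alpha_k\beta_k}{2\cona}+\frac{r_k-r_{k+1}}{2\beta_k}.
\]
\begin{itemize}
\item Summed over $j\ge k$, the second term is at most $\Phi_{\B}(r_k)/(2\sqrt{\cona\epsilon})$ by Lemma~\ref{lem:rate-preliminary}\ref{rate-preliminary2}.
\item For the first term, the slack and $\alpha_k\le\bar\alpha$ give the shifted a.s.\ bound $r_k\le\Psi_{\B}^{-1}(\epsilon A_{k+1})$. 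Hence $\beta_k\le\sqrt{\cona\epsilon\,\phi_{\B}(\Psi_{\B}^{-1}(\epsilon A_{k+1}))}$.
\item The key observation is that $L:=\Phi_{\B}\circ\Psi_{\B}^{-1}$ is convex with $-\sqrt{\phi_{\B}(\Psi_{\B}^{-1}(t))}\in\partial L(t)$. Therefore $\epsilon\alpha_k\sqrt{\phi_{\B}(\Psi_{\B}^{-1}(\epsilon A_{k+1}))}\le L(\epsilon A_k)-L(\epsilon A_{k+1})$, which also telescopes.
\end{itemize}
Summing yields $\sum_{j\ge k}\|x^{j+1}-x^j\|\le\Phi_{\B}(\Psi_{\B}^{-1}(\epsilon A_k))/\sqrt{\cona\epsilon}$. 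This is \eqref{eq:global-ite-asrate} with constant $\sqrt2/(\cona\sqrt{\conb})$.

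Without this pathwise control of $\sum_j\alpha_j\sqrt{\phi_{\B}(r_j)}$ through the convexity of $L$, your proposal does not reach the iterate rate.
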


\revise{In most applications, $\phi_{\B}(t)=O(t)$ as $t\downarrow0$, which implies that the second, EB-term in~\eqref{eq:hxk-rate-exp} converges at most exponentially in $A_k$ and hence that the first, exponential term does not qualitatively alter the convergence rate.
Compared with the existing techniques summarized in Theorem~\ref{thm:globalEB-exph-cvx}, Theorem~\ref{th_general_exp_rate1}\ref{Th_exp_1} completely removes the convexity assumption on $\phi_{\B}$, whose restrictiveness is illustrated in Section~\ref{subsubsection:non-convex_moduli}. Moreover, Theorem~\ref{th_general_exp_rate1}\ref{Th_exp_2} eliminates the additional factor $L(\log (r_k^{-1}))$ in Lemma~\ref{lem:exp-to-as-little-o}.} The proof of Theorem~\ref{th_general_exp_rate1} is based on \revise{three analytical tools}, Propositions~\ref{prop:exp-est}-\ref{prop:iter-global-as}. The first two facilitates the establishment of in-expectation and almost-sure convergence rates of $h(x^k)$; while the third transfers the \revise{almost sure} convergence rates of $h(x^k)$ to $x^k$. 

\subsubsection{An apparatus for in-expectation convergence rate analysis}
To estimate $\Exp[h(x^k)-h^*]$, by Lemma~\ref{lem:random-convergence}, it suffices to estimate
\[
\Exp \left[\Psi_{\B}^{-1}\left(\Psi_{\B}(h(x^0)-h^*)+\revise{\cona}\sum_{i=0}^{k-1} \alpha_i z_i \right) \right].
\]
\revise{Note that $\Psi_{\B}^{-1}$ is decreasing and convex. Indeed, by Lemma~\ref{lem:functions}\ref{lem_func_1}, $\Psi_{\B}$ is decreasing and convex, and the inverse of a decreasing convex function is again decreasing and convex on its range.} Then, the above task is abstracted as follows.

\begin{question}\label{q:global-Exp}
    Consider a random sequence $\{X_k\}_{k \geq 0}$ such that $X_k$ is $\cF_{k+1}$-measurable, \revise{$0\le X_k \le b$ and $\Exp[X_k\mid \cF_k] \ge a$ for all $k\ge 0$, where $0<a\le b$}. Let $\{\alpha_k\}_{k \geq 0}$ be a sequence of positive coefficients. Define the partial sum $ S_k = \sum_{i=0}^{k-1} \alpha_i X_i $, and let $\chi: \R_+ \to \R$ be a given non-increasing \revise{convex} function. Can we upper bound $ \Exp[\chi(S_k)] $ for a given $ k $?
\end{question}

\revise{Although the random variables $\{X_k\}_{k\ge0}$ may be dependent, their boundedness and conditional mean lower bound allow us to compare $\Exp[\chi(S_k)]$ with the corresponding expectation of a weighted sum generated by an independent Bernoulli sequence.}

\revise{\begin{lemma}\label{lem:convex-Bernoulli-comparison}
    Suppose that the setting in Question~\ref{q:global-Exp} holds. Let $\{Y_k\}_{k\ge0}$ be an i.i.d. Bernoulli sequence with $Y_k\sim \operatorname{Bernoulli}(p)$ with $p= a/b$.
    Define
    \[
        T_k := b\sum_{i=0}^{k-1}\alpha_iY_i .
    \]
    Then, for any convex non-increasing function $\chi:\R_+\to\R$,
    \[
        \Exp[\chi(S_k)]\le \Exp[\chi(T_k)]
        \qquad \forall k\ge0.
    \]
\end{lemma}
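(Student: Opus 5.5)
We prove the comparison by replacing the $X_i$ one at a time by the independent variables $bY_i$, moving backwards from the last index. The two facts we need are a conditional convex-order step and the monotonicity of $\chi$.

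\textbf{Single-variable step.} Fix a convex function $\psi$ on $[0,b]$ and a random variable $X\in[0,b]$ with mean $m$. Convexity gives the chord bound
\[
\psi(x)\le (1-x/b)\,\psi(0)+(x/b)\,\psi(b).
\]
Hence
\[
\Exp[\psi(X)]\le (1-m/b)\,\psi(0)+(m/b)\,\psi(b),
\]
which is the expectation of $\psi(bY')$ with $Y'\sim\operatorname{Bernoulli}(m/b)$. If in addition $\psi$ is non-increasing, then the right-hand side is non-increasing in $m$, because $\psi(b)\le\psi(0)$. So $m\ge a$ yields
\[
\Exp[\psi(X)]\le \Exp[\psi(bY)],\qquad Y\sim\operatorname{Bernoulli}(a/b).
\]
The same argument applies conditionally: for a regular conditional distribution given $\cF_i$, the conditional mean of $X_i$ is at least $a$ almost surely.

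\textbf{Backward replacement.} We may enlarge the probability space so that $\{Y_k\}$ is i.i.d. and independent of $\cF_\infty$. Define the hybrid sums
\[
H_j := \sum_{i<j}\alpha_iX_i + b\sum_{i=j}^{k-1}\alpha_iY_i,\qquad j=0,\dots,k,
\]
so that $H_k=S_k$ and $H_0=T_k$. It suffices to show $\Exp[\chi(H_{j+1})]\le\Exp[\chi(H_j)]$ for each $j$.

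Set $R:=b\sum_{i=j+1}^{k-1}\alpha_iY_i$, which is independent of everything else, and put
\[
\psi_s(x):=\Exp_R[\chi(s+\alpha_j x+R)].
\]
For each fixed $s\ge0$, $\psi_s$ is convex and non-increasing in $x\ge0$, since it is an average of shifts of $\chi$ composed with an increasing affine map. Let $s=\sum_{i<j}\alpha_iX_i$, which is $\cF_j$-measurable.

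\begin{itemize}
\item By independence of $R$, $\Exp[\chi(H_{j+1})\mid \cF_{j+1}]=\psi_s(X_j)$, and therefore $\Exp[\chi(H_{j+1})\mid\cF_j]=\Exp[\psi_s(X_j)\mid\cF_j]$.
\item Applying the single-variable step conditionally on $\cF_j$ with $\psi=\psi_s$ gives $\Exp[\psi_s(X_j)\mid\cF_j]\le \Exp[\psi_s(bY_j)\mid\cF_j]$.
\item Since $Y_j$ and $R$ are independent of $\cF_\infty$, the right-hand side equals $\Exp[\chi(H_j)\mid\cF_j]$.
\end{itemize}

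Taking expectations gives $\Exp[\chi(H_{j+1})]\le\Exp[\chi(H_j)]$, and chaining over $j=k-1,\dots,0$ yields $\Exp[\chi(S_k)]\le\Exp[\chi(T_k)]$.

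\textbf{Main obstacle.} The delicate points are measure-theoretic rather than analytic:
\begin{itemize}
\item justifying that the conditional expectation can be computed by freezing the $\cF_j$-measurable quantity $s$ (via regular conditional distributions, whose existence the paper already notes);
\item the measurability of $(s,x)\mapsto\psi_s(x)$;
\item integrability of $\chi$ on the relevant range. This can be handled by noting $\chi(t)\le\chi(0)$, and that $\chi$ is bounded below on the compact range $[0,b\sum_{i<k}\alpha_i]$ because it is convex and hence continuous on $(0,\infty)$.
\end{itemize}
The argument also works if $\chi$ is only convex and non-increasing on $\R_+$ and takes the value $\chi(0)$ at the endpoint, since the chord inequality uses only convexity on $[s,\,s+\alpha_j b]$.
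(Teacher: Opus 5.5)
Your proof is correct and is essentially the paper's argument. The paper also peels off the last variable using the pointwise chord inequality $\chi(S_k+\alpha_kX_k)\le\chi(S_k)+\frac{X_k}{b}\bigl(\chi(S_k+\alpha_kb)-\chi(S_k)\bigr)$, the bound $\Exp[X_k\mid\cF_k]\ge a$, and monotonicity of $\chi$. It then handles the earlier variables by an induction on $k$ whose hypothesis is quantified over all convex non-increasing test functions, applied to both $\chi$ and $\chi(\cdot+\alpha_kb)$; unrolled, this is exactly your backward hybrid chain. Because the paper compares expectations directly, it never has to couple $\{Y_k\}$ with the $X_k$ on an enlarged space or use the freezing-lemma and regular-conditional-distribution bookkeeping you flagged as delicate.
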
}

\revise{\begin{proof}
    We prove the claim by induction on $k$. The case $k=0$ is trivial, since
    $S_0=T_0=0$. Suppose that the claim holds for some $k\ge0$ for every
    convex non-increasing function $\chi:\R_+\to\R$. We now prove it for $k+1$.
    Since $S_k$ is $\cF_k$-measurable, and since $0\le X_k\le b$, the convexity of $\chi$ gives
    \[
        \chi(S_k+\alpha_kX_k)
        \le
        \chi(S_k)
        +
        \frac{X_k}{b}
        \bigl(\chi(S_k+\alpha_k b)-\chi(S_k)\bigr).
    \]
    Taking conditional expectation, and using the monotonicity of $\chi$, we obtain
    \[
    \begin{aligned}
        & \Exp[\chi(S_{k+1})\mid \cF_k] \le
        \chi(S_k)
        +
        \frac{\Exp[X_k\mid\cF_k]}{b}
        \bigl(\chi(S_k+\alpha_k b)-\chi(S_k)\bigr)  \\
        \le &
        \chi(S_k)
        +
        \frac{a}{b}
        \bigl(\chi(S_k+\alpha_k b)-\chi(S_k)\bigr)  
        =
        (1-p)\chi(S_k)+p\chi(S_k+\alpha_k b).
    \end{aligned}
    \]
    Taking expectation on both sides yields $\Exp[\chi(S_{k+1})]
        \le
        (1-p)\Exp[\chi(S_k)]
        +
        p\Exp[\chi(S_k+\alpha_k b)]$.

    Next, the functions $s\mapsto \chi(s)$ and $s\mapsto \chi(s+\alpha_k b)$ are both convex and non-increasing on $\R_+$. Hence, by the induction hypothesis, $\Exp[\chi(S_k)]\le \Exp[\chi(T_k)]$ and $\Exp[\chi(S_k+\alpha_k b)] \le \Exp[\chi(T_k+\alpha_k b)]$.
    Consequently,
    \[
        \Exp[\chi(S_{k+1})]
        \le
        (1-p)\Exp[\chi(T_k)]
        +
        p\Exp[\chi(T_k+\alpha_k b)].
    \]

    Since $Y_k$ is independent of $Y_0,\ldots,Y_{k-1}$ and satisfies
    $Y_k\sim\operatorname{Bernoulli}(p)$, we have
    \[
    \begin{aligned}
        \Exp[\chi(T_{k+1})]
        =
        \Exp[\chi(T_k+b\alpha_kY_k)] 
        =
        (1-p)\Exp[\chi(T_k)]
        +
        p\Exp[\chi(T_k+\alpha_k b)].
    \end{aligned}
    \]
    Combining the last two displays gives $\Exp[\chi(S_{k+1})]\le \Exp[\chi(T_{k+1})]$.
    This completes the induction and hence the proof.
\end{proof}}

\revise{
\begin{proposition}\label{prop:exp-est}
    Consider Question~\ref{q:global-Exp}. Suppose that the sequence $\{\alpha_k\}_{k\ge 0}$ is upper bounded by some constant $\bar{\alpha}>0$. Denote $\mathfrak{h}:=\frac{1-\log 2}{2}$.
    Then, for any convex non-increasing function $\chi:\R_+\to\R_+$,
    \begin{equation}\label{eq:exp-est-eq1}
        \Exp[\chi(S_k)]
        \le
        \chi(0)\exp\left(
            -\mathfrak{h}\frac{a}{b\bar\alpha}A_k
        \right)
        +
        \chi\left(\frac{a}{2}A_k\right)
        \qquad \forall k\ge 1.
    \end{equation}
\end{proposition}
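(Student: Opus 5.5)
By Lemma~\ref{lem:convex-Bernoulli-comparison}, I will replace $S_k$ by the Bernoulli surrogate $T_k=b\sum_{i=0}^{k-1}\alpha_iY_i$ with $Y_i\sim\operatorname{Bernoulli}(p)$, $p=a/b$, i.i.d. This gives $\Exp[\chi(S_k)]\le\Exp[\chi(T_k)]$, so it suffices to bound $\Exp[\chi(T_k)]$. Note that $\Exp[T_k]=aA_k$.

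I will split on the event $E_k:=\{T_k\ge \tfrac{a}{2}A_k\}$. Since $\chi$ is non-increasing and nonnegative, $\chi(T_k)\le\chi(\tfrac a2A_k)$ on $E_k$. On $E_k^c$ we have $T_k\ge 0$, so $\chi(T_k)\le\chi(0)$. Hence
\[
\Exp[\chi(T_k)]\le \chi\!\left(\tfrac a2A_k\right)+\chi(0)\,\mathbb{P}\!\left(T_k<\tfrac a2 A_k\right).
\]
Convexity is therefore only needed through the comparison lemma. What remains is a lower-tail bound for a weighted sum of independent Bernoulli variables.

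For the tail, I will use a Chernoff bound. For $\lambda>0$,
\[
\mathbb{P}\!\left(T_k<\tfrac a2A_k\right)\le e^{\lambda aA_k/2}\prod_{i<k}\Exp\!\left[e^{-\lambda b\alpha_iY_i}\right]=e^{\lambda aA_k/2}\prod_{i<k}\bigl(1-p(1-e^{-\lambda b\alpha_i})\bigr).
\]
Using $1-x\le e^{-x}$ and the concavity of $s\mapsto 1-e^{-s}$ on $[0,\lambda b\bar\alpha]$, I get
\[
1-e^{-\lambda b\alpha_i}\ge \frac{\alpha_i}{\bar\alpha}\bigl(1-e^{-\lambda b\bar\alpha}\bigr).
\]
The tail is then at most
\[
\exp\!\left(A_k\Bigl[\tfrac{\lambda a}{2}-\tfrac{p}{\bar\alpha}\bigl(1-e^{-\lambda b\bar\alpha}\bigr)\Bigr]\right).
\]
I will choose $\lambda b\bar\alpha=\log 2$, so that $e^{-\lambda b\bar\alpha}=1/2$. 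The exponent becomes
\[
A_k\,\frac{a}{b\bar\alpha}\Bigl(\tfrac{\log 2}{2}-\tfrac12\Bigr)=-\mathfrak{h}\,\frac{a}{b\bar\alpha}A_k,
\]
which is exactly the constant claimed in \eqref{eq:exp-est-eq1}.

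The main obstacle is this tail step: the weights $\alpha_i$ are non-uniform, and the exponent must come out in terms of $A_k$ alone. The chord inequality for the concave function $1-e^{-s}$, together with the uniform cap $\alpha_i\le\bar\alpha$, is what makes this work. The remaining steps are routine.
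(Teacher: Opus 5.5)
Your proposal is correct and follows the paper's proof step for step: the Bernoulli comparison via Lemma~\ref{lem:convex-Bernoulli-comparison}, the split at $\tfrac a2 A_k$ using monotonicity and nonnegativity of $\chi$, and a Chernoff bound in which the non-uniform weights are handled by the chord inequality for the concave map $s\mapsto 1-e^{-s}$ together with $\alpha_i\le\bar\alpha$. The only difference is cosmetic: the paper rescales to $W_k=T_k/(b\bar\alpha)$ and takes $\lambda=\log 2$, which is the same as your choice $\lambda b\bar\alpha=\log 2$.
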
}

\revise{
\begin{proof}
    By Lemma~\ref{lem:convex-Bernoulli-comparison}, we have $\Exp[\chi(S_k)]\le \Exp[\chi(T_k)]$,
    where
    \[
        T_k=b\sum_{i=0}^{k-1}\alpha_iY_i,
        \qquad
        Y_i\sim \operatorname{Bernoulli}(p)\ \text{i.i.d.},
        \qquad
        p=\frac{a}{b}.
    \]
    Notice that $\Exp[T_k]=bpA_k=aA_k$.
    Since $\chi$ is non-increasing and nonnegative, we obtain
    \[
    \begin{aligned}
        \Exp[\chi(T_k)]
        &=
        \Exp[\chi(T_k)\mathbf 1_{\{T_k\le aA_k/2\}}]
        +
        \Exp[\chi(T_k)\mathbf 1_{\{T_k>aA_k/2\}}]  \\
        &\le
        \chi(0)\mathbb{P}\left(T_k\le \frac{a}{2}A_k\right)
        +
        \chi\left(\frac{a}{2}A_k\right).
    \end{aligned}
    \]
    It remains to bound the lower-tail probability. Define
    \[
        W_k:=\sum_{i=0}^{k-1}\frac{\alpha_i}{\bar\alpha}Y_i .
    \]
    Then, $T_k=b\bar\alpha W_k$, and
    \[
        \Exp[W_k]=\frac{p}{\bar\alpha}A_k
        =
        \frac{a}{b\bar\alpha}A_k.
    \]
    Hence,
    \[
        T_k\le \frac{a}{2}A_k
        \quad\Longleftrightarrow\quad
        W_k\le \frac12\Exp[W_k].
    \]
    For any $\lambda>0$, Markov's inequality gives
    \[
    \begin{aligned}
        \mathbb{P}\left(W_k\le \frac12\Exp[W_k]\right)
        &\le
        \exp\left(\frac{\lambda}{2}\Exp[W_k]\right)
        \Exp[e^{-\lambda W_k}]  \\
        &=
        \exp\left(\frac{\lambda}{2}\Exp[W_k]\right)
        \prod_{i=0}^{k-1}
        \left(
            1-p+p\exp\left(-\lambda\frac{\alpha_i}{\bar\alpha}\right)
        \right).
    \end{aligned}
    \]
    Using $1-u\le e^{-u}$ and the concavity of $t\mapsto 1-e^{-\lambda t}$ on $[0,1]$, we have
    \[
    \begin{aligned}
        & \prod_{i=0}^{k-1}
        \left(
            1-p+p\exp\left(-\lambda\frac{\alpha_i}{\bar\alpha}\right)
        \right)
        \le
        \exp\left[
            -p\sum_{i=0}^{k-1}
            \left(
                1-\exp\left(-\lambda\frac{\alpha_i}{\bar\alpha}\right)
            \right)
        \right]  \\
        \le& \exp\left[
            -p(1-e^{-\lambda})
            \sum_{i=0}^{k-1}\frac{\alpha_i}{\bar\alpha}
        \right]  
        =
        \exp\left[
            -(1-e^{-\lambda})\Exp[W_k]
        \right].
    \end{aligned}
    \]
    Therefore,
    \[
        \mathbb{P}\left(W_k\le \frac12\Exp[W_k]\right)
        \le
        \exp\left[
            \left(\frac{\lambda}{2}-(1-e^{-\lambda})\right)\Exp[W_k]
        \right].
    \]
    Choosing $\lambda=\log 2$, we obtain
    \[
        \frac{\lambda}{2}-(1-e^{-\lambda})
        =
        \frac{\log 2}{2}-\frac12
        =
        -\frac{1-\log 2}{2}
        =
        -\mathfrak{h}.
    \]
    Thus,
    \[
        \mathbb{P}\left(T_k\le \frac{a}{2}A_k\right)
        \le
        \exp\left(
            -\mathfrak{h}\frac{a}{b\bar\alpha}A_k
        \right).
    \]
    Combining this estimate with the preceding bound on $\Exp[\chi(T_k)]$ gives
    \[
        \Exp[\chi(S_k)]
        \le
        \chi(0)\exp\left(
            -\mathfrak{h}\frac{a}{b\bar\alpha}A_k
        \right)
        +
        \chi\left(\frac{a}{2}A_k\right),
    \]
    which proves the desired result.
\end{proof}
}

\subsubsection{An apparatus for almost-sure convergence rate analysis}
Based on \eqref{eq:random-convergence}, establishing an almost-sure upper bound for $ h(x^{k}) $ reduces to deriving an almost sure lower bound for $ \sum_{i=0}^{k-1}\alpha_i z_i $. This task is abstracted as follows. 

\begin{question}\label{q:2}
\revise{Consider the same random sequence $\{X_k\}_{k \geq 0}$ defined in Question~\ref{q:global-Exp}.} Given a positive sequence $\{\alpha_k\}_{k\ge 0}$, can we obtain an almost-sure asymptotic lower bound on $\sum_{i=0}^{k-1} \alpha_i X_i$?
\end{question}

We answer this question in the proposition below, whose proof closely follows that of~\cite[Theorem 1]{wu2012asymptotics}.

\begin{proposition}\label{prop:as-bound}
    Consider Question~\ref{q:2}. Suppose that sequence $\{\alpha_k\}_{k\ge 0}$ satisfies Assumption~\ref{assmp:step-size}. Then,
    \[
    \liminf_{k\rightarrow \infty}\frac{\sum_{j=0}^{k-1} \alpha_j X_j}{\sum_{j=0}^{k-1} \alpha_j} \geq \revise{a}\quad \text{ a.s.}.
    \]
\end{proposition}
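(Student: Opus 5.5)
The plan is to split the weighted sum into a predictable part and a martingale part, and to show that the martingale part is negligible relative to $A_k=\sum_{j<k}\alpha_j$ via a martingale strong law of large numbers. Set $\mu_j:=\Exp[X_j\mid\cF_j]\ge a$ and $D_j:=X_j-\mu_j$. Then $D_j$ is $\cF_{j+1}$-measurable, $\Exp[D_j\mid\cF_j]=0$, and $|D_j|\le b$. We have
\[
\frac{\sum_{j<k}\alpha_jX_j}{A_k}\ \ge\ a+\frac{\sum_{j<k}\alpha_jD_j}{A_k},
\]
so it suffices to prove $M_k/A_k\to0$ almost surely, where $M_k:=\sum_{j<k}\alpha_jD_j$.

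To prove this, consider the normalized martingale $N_k:=\sum_{j<k}\frac{\alpha_j}{A_{j+1}}D_j$. Its increments are orthogonal, so
\[
\Exp[N_k^2]\le b^2\sum_{j}\frac{\alpha_j^2}{A_{j+1}^2}\le b^2\bar\alpha\sum_j\frac{\alpha_j}{A_{j+1}^2}.
\]
The last sum is finite by comparison with $\int_{A_1}^\infty s^{-2}\,ds$: since $A_{j+1}\ge A_j$, we have $\frac{\alpha_j}{A_{j+1}^2}\le\frac{\alpha_j}{A_jA_{j+1}}=\frac1{A_j}-\frac1{A_{j+1}}$ for $j\ge1$, which telescopes. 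Here we use $\alpha_j\le\bar\alpha$ from Assumption~\ref{assmp:step-size}. Hence $N_k$ is an $L^2$-bounded martingale and converges almost surely. Because $A_{k}\uparrow\infty$ by Assumption~\ref{assmp:step-size}, Kronecker's lemma applied with weights $A_{j+1}$ then gives $\frac{1}{A_{k}}\sum_{j<k}\alpha_jD_j\to0$ almost surely. The index shift between $A_k$ and $A_{k+1}$ is harmless, since $A_{k+1}/A_k\to1$ by boundedness of $\alpha_k$.

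Combining the two steps, $\liminf_k \frac{\sum_{j<k}\alpha_jX_j}{A_k}\ge a+\lim_k M_k/A_k=a$ almost surely. The main obstacle is the summability bound $\sum_j\alpha_j^2/A_{j+1}^2<\infty$. When the stepsizes are not square-summable, this is the only place the argument could break, and it is exactly where the uniform bound $\bar\alpha$ enters. I would handle it by the telescoping comparison above; the first term $j=0$ is finite and is treated separately. Kronecker's lemma requires only positive weights increasing to infinity, which $A_{j+1}$ provides.
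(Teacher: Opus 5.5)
Your proposal is correct and follows the paper's proof essentially step for step. Both arguments use the same centering $X_j-\Exp[X_j\mid\cF_j]$, the same normalized martingale $\sum_{j<k}\frac{\alpha_j}{A_{j+1}}D_j$, the same summability bound via $\bar\alpha$ (your telescoping is the paper's integral comparison with $\int_{A_1}^\infty t^{-2}\,dt$), and Kronecker's lemma. The only difference is that you invoke $L^2$-boundedness where the paper cites a conditional-variance convergence theorem (Hall--Heyde, Thm.~2.17). Also, no index shift is needed: Kronecker's lemma with weights $b_j=A_{j+1}$, summed up to $j=k-1$, already gives the normalizer $A_k$.
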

\begin{proof}
    \revise{Recall that $A_{k} = \sum_{i=0}^{k-1} \alpha_i$ for $k\ge 1$}. Define $Z_k = X_k-\Exp[X_k|\cF_{k}]$ for $k\ge 0$. Then, $\frac{\alpha_k}{\revise{A_{k+1}}} Z_k$ is a martingale difference and $|Z_k|\leq \revise{b}$ almost surely. Define $U_k = \sum_{j = 0}^{k-1} \frac{\alpha_j}{\revise{A_{j+1}}} Z_j$ for $k\geq 1$. Then, $U_k$ is a martingale. Observing that
    \[
    \Exp\left[\left(\frac{\alpha_j}{\revise{A_{j+1}}} Z_j\right)^2 \Bigg|\cF_{j}\right] \leq \left(\frac{\alpha_j \revise{b}}{\revise{A_{j+1}}}\right)^2\quad \text{a.s.},
    \]
    \revise{For $k\geq1$, by Assumption~\ref{assmp:step-size}, we have
    \[
    \sum_{k=1}^{\infty}
    \left(\frac{\alpha_k}{A_{k+1}}\right)^2
    \leq \bar{\alpha} \sum_{k=1}^{\infty}\frac{\alpha_k}{A_{k+1}^2}
    \leq \bar{\alpha}\sum_{k=1}^{\infty}\int_{A_k}^{A_{k+1}}\frac{dt}{t^2}
    = \bar{\alpha}\int_{A_1}^{\infty}\frac{dt}{t^2}
    =\frac{\bar{\alpha}}{\alpha_0}<+\infty.
    \]}
    Then $\sum_{j=0}^\infty \Exp[(\frac{\alpha_j}{\revise{A_{j+1}}} Z_j)^2|\cF_{j}]<\infty$ almost surely. By \cite[Theorem 2.17]{hall2014martingale}, $U_k$ converges almost surely. Furthermore, since $0 < A_1 \leq A_2 \leq \cdots$ and $\revise{A_k} \to +\infty$, an application of Kronecker's lemma \cite[Lemma~IV.3.2]{shiryaev1996probability} yields
    \[
    \revise{\frac{1}{A_k} \sum_{j=0}^{k-1} \alpha_j Z_j \rightarrow 0} \quad \text{a.s.,}
    \]
    or equivalently,
    \[
    \sum_{j=0}^{k-1} \frac{\alpha_j X_j}{A_k} - \sum_{j=0}^{k-1} \frac{\alpha_j \Exp[X_j|\cF_{j}]}{A_k} \rightarrow 0 \quad \text{a.s..}
    \]
    \revise{Noting that $\Exp[X_j|\cF_{j}] \geq a$ for all $j\ge 0$ almost surely, the proof is then completed.}
\end{proof}

\subsubsection{Transferring almost sure rates of $h(x^k)$ to $x^k$ with the weak descent condition}

\revise{The aim of this part is to devise a mechanism for establishing the almost-sure convergence rate of $x^k$ from that of $h(x^k)$. This task is considerably more challenging than transferring expected rates, as in Proposition~\ref{prop:iter-global-exp}. As discussed in Section~\ref{sec:exist-det}, existing methods achieve such a transfer only under the strong descent conditions \eqref{eq:SDC1} and \eqref{eq:SDC2}. In our setting, however, \eqref{eq:SDC2} is no longer available. This raises a natural question: is it possible to obtain a rate transformation using only the weak descent condition \eqref{eq:SDC1}? We answer this question affirmatively by showing that a certain rate transformation remains possible under \eqref{eq:SDC1} alone.} Our starting point is Assumption~\ref{assmp:basic}\ref{basic_1}, which yields
\begin{equation}\label{eq:basic_1_variant}
\norm{x^{k+1}-x^k} \leq \sqrt{\frac{\alpha_k}{\cona}(h(x^k)-h(x^{k+1}))} .
\end{equation}
If $ h(x^k) - h^* $ converges linearly, then the desired transfer can be obtained through the simple relaxation
\[
\norm{x^{k+1}-x^k} \leq \sqrt{\frac{\bar{\alpha}}{\cona}(h(x^k)-h^*)} .
\]
This approach has been used in \cite{luo1993error, Tseng2009BlockCoordinate, Tseng2010Coordinate}. However, linear convergence of $h(x^k)-h^*$ typically holds only when $\phi_{\B}$ is linear. \revise{For nonlinear $\phi_{\B}$, directly replacing $h(x^{k+1})$ with $h^*$ yields a bound that is too coarse and may even blow up. To overcome this limitation, we adopt a more refined treatment of the right-hand side of \eqref{eq:basic_1_variant} and derive a trajectory-based rate-transfer result applicable to nonlinear $\phi_{\B}$.}

\begin{proposition}\label{prop:iter-global-as}
    Suppose that Assumptions~\ref{assmp:basic},~\ref{assmp:step-size} and~\ref{assmp:Global-EB} hold, and that \revise{$1/\phi_{\B}$ is not integrable near $0$, whereas $1/\sqrt{\phi_{\B}}$ is integrable near $0$.} Given a fixed sample path $\omega$, if there exists $\epsilon > 0$ and an integer $K\ge 1$ such that $\epsilon A_k$ lies in the domain of $\Psi_\B^{-1}$ for all $k\ge K$ and
    \begin{equation}
        \label{ineq:almost-sure-transfer-condition}
        h(x^k(\omega))-h^* \leq \Psi_{\B}^{-1}\left(\epsilon \revise{A_{k+1}}\right) \quad \forall k\geq K,
    \end{equation}
    then $\{x^k (\omega)\}_{k\ge 0}$ converges to some $x^*(\omega) \in \argmin_{x\in \B} h(x)$ and
    \[
    \norm{x^{k}(\omega)-x^*(\omega)} \leq \frac{1}{\sqrt{\cona\epsilon}} \Phi_{\B}\left(\Psi_{\B}^{-1}\left(\epsilon \revise{A_k}\right)\right) \quad \forall k\geq K.
    \]
\end{proposition}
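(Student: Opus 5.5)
The plan is to bound the tail length $\sum_{j\ge k}\norm{x^{j+1}-x^j}$ directly from the weak descent inequality \eqref{eq:basic_1_variant}, using only the pathwise envelope \eqref{ineq:almost-sure-transfer-condition}. A Cauchy--Schwarz split with weights adapted to the envelope will produce the product of two factors, each controlled by $\Phi_{\B}$. Throughout, fix $\omega$ and write
\[
h_j:=h(x^j(\omega))-h^*,\qquad r(t):=\Psi_{\B}^{-1}(\epsilon t),\qquad \rho_j:=r(A_j),\qquad d_j:=h_j-h_{j+1}\ge 0 .
\]
The hypothesis then reads $h_j\le \rho_{j+1}$ for $j\ge K$. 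Since $1/\phi_{\B}$ is not integrable near $0$, $\Psi_{\B}^{-1}$ is decreasing on its domain and $r(t)\downarrow 0$ as $t\to\infty$.

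\textbf{Step 1 (first factor).} I will show
\[
\sum_{j\ge k} w_j\le \Phi_{\B}(\rho_k)/\epsilon,\qquad\text{where } w_j:=\alpha_j\sqrt{\phi_{\B}(\rho_{j+1})}.
\]
Formally, $\Psi_{\B}(r(t))=\epsilon t$ gives $r'(t)=-\epsilon\phi_{\B}(r(t))$. Substituting $s=r(t)$ then yields
\[
\Phi_{\B}(\rho_j)-\Phi_{\B}(\rho_{j+1})=\epsilon\int_{A_j}^{A_{j+1}}\sqrt{\phi_{\B}(r(t))}\,dt\ \ge\ \epsilon\,\alpha_j\sqrt{\phi_{\B}(\rho_{j+1})},
\]
using that $\phi_{\B}$ is nondecreasing and $r$ is decreasing. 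Since $\phi_{\B}$ may jump, I will make this rigorous with the subgradient relation of Lemma~\ref{lem:functions} (monotone change of variables for the convex function $\Psi_{\B}$), and then telescope.

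\textbf{Step 2 (Cauchy--Schwarz split).} From \eqref{eq:basic_1_variant},
\[
\sum_{j\ge k}\norm{x^{j+1}-x^j}\le \sum_{j\ge k}\sqrt{w_j}\sqrt{\tfrac{\alpha_j d_j}{\cona w_j}}\le\Big(\sum_{j\ge k}w_j\Big)^{1/2}\Big(\frac1{\cona}\sum_{j\ge k}\frac{d_j}{\sqrt{\phi_{\B}(\rho_{j+1})}}\Big)^{1/2}.
\]

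\textbf{Step 3 (second factor; the main obstacle).} The goal is $\sum_{j\ge k} d_j/\sqrt{\phi_{\B}(\rho_{j+1})}\le \Phi_{\B}(\rho_{k+1})\le\Phi_{\B}(\rho_k)$, and this is where the refined treatment is needed. I will apply Abel summation to a finite sum up to $N$. The coefficients $\beta_j:=1/\sqrt{\phi_{\B}(\rho_{j+1})}$ are nondecreasing in $j$, so the sum equals
\[
h_k\beta_k+\sum_{j=k+1}^{N} h_j(\beta_j-\beta_{j-1})-h_{N+1}\beta_N .
\]
Dropping the last (nonpositive) term and using $h_j\le\rho_{j+1}$ bounds this by
\[
\rho_{k+1}\beta_k+\sum_{j>k}\rho_{j+1}(\beta_j-\beta_{j-1}).
\]
This expression is a lower Stieltjes sum for the integration-by-parts identity
\[
\Phi_{\B}(\rho)=\frac{\rho}{\sqrt{\phi_{\B}(\rho)}}-\int_0^{\rho}s\,d\big(1/\sqrt{\phi_{\B}(s)}\big).
\]
The identity holds because the boundary term $s/\sqrt{\phi_{\B}(s)}\le\Phi_{\B}(s)\to0$. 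Equivalently, one can verify the bound directly: $\rho_{k+1}\beta_k+\sum_{j>k}\rho_{j+1}(\beta_j-\beta_{j-1})$ equals $\sum_{j\ge k}(\rho_{j+1}-\rho_{j+2})\beta_j$, and each summand is $\le\int_{\rho_{j+2}}^{\rho_{j+1}}ds/\sqrt{\phi_{\B}(s)}$ since $\phi_{\B}(s)\le\phi_{\B}(\rho_{j+1})$ there. I expect to present it this way, as a telescoping comparison, and I anticipate care with the indices and the limit $N\to\infty$.

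\textbf{Step 4 (conclusion).} Combining the three steps gives
\[
\sum_{j\ge k}\norm{x^{j+1}-x^j}\le \Big(\frac{\Phi_{\B}(\rho_k)}{\epsilon}\cdot\frac{\Phi_{\B}(\rho_k)}{\cona}\Big)^{1/2}=\frac{1}{\sqrt{\cona\epsilon}}\Phi_{\B}(\rho_k),
\]
which is finite. Hence $\{x^k(\omega)\}$ is Cauchy and converges to some $x^*(\omega)\in\B$, since $\B$ is closed. Continuity of $h$ and $h_k\le\rho_{k+1}\to0$ give $x^*(\omega)\in\argmin_{\B}h$. Finally, $\norm{x^k-x^*}\le\sum_{j\ge k}\norm{x^{j+1}-x^j}$ yields the claimed bound.
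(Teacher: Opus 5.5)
Your proof is correct and gives exactly the paper's constant $1/\sqrt{\cona\epsilon}$, but the splitting differs. The paper uses a termwise AM--GM bound $\norm{x^{j+1}-x^j}\le \frac{\alpha_j b_j}{2\cona}+\frac{d_j}{2b_j}$ with the \emph{path-dependent} weight $b_j:=\sqrt{\cona\epsilon\,\phi_{\B}(h_j)}$. The first piece is pushed up to the envelope via \eqref{ineq:almost-sure-transfer-condition} and telescoped through the subgradient inequality of the convex function $\Phi_{\B}\circ\Psi_{\B}^{-1}$; this is exactly your Step~1 estimate $\Phi_{\B}(\rho_j)-\Phi_{\B}(\rho_{j+1})\ge\epsilon w_j$. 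The second piece is telescoped along the path by Lemma~\ref{lem:rate-preliminary}\ref{rate-preliminary2}.

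You instead apply a global Cauchy--Schwarz with \emph{deterministic} envelope weights. This gives you two simplifications:
\begin{itemize}
\item Your weights never vanish, so you need no separate treatment of the first index at which $h_j=0$. The paper must handle that index separately because its weight $b_j$ is zero there.
\item Step~1 needs neither convexity of $\Phi_{\B}\circ\Psi_{\B}^{-1}$ nor a change of variables. Integrate $1/\sqrt{\phi_{\B}(s)}\ge\sqrt{\phi_{\B}(\rho_{j+1})}/\phi_{\B}(s)$ over $[\rho_{j+1},\rho_j]$ and use $\int_{\rho_{j+1}}^{\rho_j}ds/\phi_{\B}(s)=\epsilon\alpha_j$; the estimate follows at once.
\end{itemize}

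Step~3 is not the obstacle you anticipate. Since $h_j\le\rho_{j+1}$ and $\phi_{\B}$ is non-decreasing, $1/\sqrt{\phi_{\B}(\rho_{j+1})}\le 1/\sqrt{\phi_{\B}(h_j)}$ whenever $h_j>0$, and $d_j=0$ otherwise. Lemma~\ref{lem:rate-preliminary}\ref{rate-preliminary2} then yields $\sum_{j\ge k}d_j/\sqrt{\phi_{\B}(\rho_{j+1})}\le\Phi_{\B}(h_k)\le\Phi_{\B}(\rho_{k+1})$ directly. Your Abel-summation argument is also valid, just longer than necessary. The finite-$N$ remainder $\rho_{N+1}/\sqrt{\phi_{\B}(\rho_{N+1})}$ is at most $\Phi_{\B}(\rho_{N+1})$, so the bound $\Phi_{\B}(\rho_{k+1})$ holds even before letting $N\to\infty$.

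With this shortcut, your Cauchy--Schwarz route gives the intermediate tail bound $\sqrt{\Phi_{\B}(\rho_k)\,\Phi_{\B}(h_k)}/\sqrt{\cona\epsilon}$. This is the geometric mean of the two terms whose arithmetic mean appears in the paper's intermediate estimate. Both reduce to the same final bound $\Phi_{\B}(\rho_k)/\sqrt{\cona\epsilon}$.
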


\begin{proof}
    For any positive, non-increasing sequence $\{\beta_k\}_{k\ge 0}$,  Assumption~\ref{assmp:basic}\ref{basic_1} implies that
    \[
        \norm{x^{k+1}(\omega)-x^k(\omega)} \leq \sqrt{\frac{\alpha_k}{\cona} [h(x^k(\omega)) - h(x^{k+1}(\omega))]} \leq \frac{\alpha_k \beta_k}{2\cona} + \frac{h(x^k(\omega))-h(x^{k+1}(\omega))}{2\beta_k}.
    \] 
\revise{Let $J_{\B}:=\Psi_{\B}((0,\tau))$ and define
$L=\Phi_{\B}\circ\Psi_{\B}^{-1}$ on $J_{\B}$. Since $\Psi_{\B}$ is continuous and strictly decreasing on $(0,\tau)$, $J_{\B}$ is an open interval and $\Psi_{\B}^{-1}$, and hence $L$, is continuous on $J_{\B}$. Fix $t\in J_{\B}$ and set $r=\Psi_{\B}^{-1}(t)$. Since $\Psi_{\B}^{-1}$ is decreasing, approaching $t$ from the left corresponds to approaching $r$ from the right. Thus,
\[
\begin{aligned}
L'_-(t)
&=\lim_{s\downarrow r}
\frac{\Phi_{\B}(s)-\Phi_{\B}(r)}
     {\Psi_{\B}(s)-\Psi_{\B}(r)}
=\frac{\Phi'_{\B,+}(r)}{\Psi'_{\B,+}(r)}
=-\sqrt{\phi_{\B}(r+)}.
\end{aligned}
\]
Similarly, $L'_+(t) = -\sqrt{\phi_{\B}(r-)}$. Since $\Psi_{\B}^{-1}$ is decreasing and $\phi_{\B}$ is non-decreasing, $L'_-$ is non-decreasing on $J_{\B}$. Therefore, $L$ is convex on $J_{\B}$ by
\cite[Chapter~I, Theorem~5.3.1]{hiriart2013convex}. Moreover, since
$\phi_{\B}(r-)\leq\phi_{\B}(r)\leq\phi_{\B}(r+)$,
\[
-\sqrt{\phi_{\B}(\Psi_{\B}^{-1}(t))}
\in[L'_-(t),L'_+(t)]
=\partial L(t),
\]
where the last equality follows from
\cite[Theorem~24.1]{Rockafellar1970}.}
    \revise{If $h(x^k(\omega))=h^*$ for some $k\geq K$, let $K_*:=\min\{k\geq K:h(x^k(\omega))=h^*\}$; otherwise, set $K_*:=+\infty$. Assumption~\ref{assmp:basic}\ref{basic_1} implies that $x^k(\omega)=x^{K_*}(\omega)$ for all $k\geq K_*$ whenever $K_*<+\infty$. For $K\leq k<K_*$, choose $\beta_k = \sqrt{\cona\epsilon\phi_{\B}(h(x^k(\omega))-h^*)}$. This sequence is positive and non-increasing for $K\leq k<K_*$. Applying the subgradient inequality for $L$ at $\epsilon A_{k+1}$ gives
\[
    L(\epsilon A_k)-L(\epsilon A_{k+1})
    \geq
    -\sqrt{\phi_{\B}(\Psi_{\B}^{-1}(\epsilon A_{k+1}))}
    \bigl(\epsilon A_k-\epsilon A_{k+1}\bigr)
    =
    \epsilon\alpha_k
    \sqrt{\phi_{\B}(\Psi_{\B}^{-1}(\epsilon A_{k+1}))}.
\] 
Moreover, for all $K\leq k<K_*$, \eqref{ineq:almost-sure-transfer-condition} gives $\beta_k\leq\sqrt{\cona\epsilon\phi_{\B}(\Psi_{\B}^{-1}(\epsilon A_{k+1}))}$. Consequently,
\[
    \alpha_k\beta_k
    \leq
    \sqrt{\frac{\cona}{\epsilon}}
    \bigl(
        L(\epsilon A_k)-L(\epsilon A_{k+1})
    \bigr).
\]
Meanwhile, Lemma~\ref{lem:rate-preliminary}\ref{rate-preliminary2}, applied with $a=h(x^k(\omega))-h^*$ and $\widetilde a=h(x^{k+1}(\omega))-h^*$, yields
\[
    \beta_k^{-1}
    \bigl(h(x^k(\omega))-h(x^{k+1}(\omega))\bigr)
    \leq
    \frac{\Phi_{\B}(h(x^k(\omega))-h^*)-\Phi_{\B}(h(x^{k+1}(\omega))-h^*)}{\sqrt{\cona\epsilon}}.
\]
Combining the preceding estimates, summing from $i=k$ to $n<K_*$, we obtain
\[
\begin{aligned}
    & \sum_{i=k}^n \norm{x^{i+1}(\omega)-x^i(\omega)}
    \leq \frac{1}{2}\sum_{{i=k}}^n \srt{\frac{\alpha_i \beta_i}{\cona} + \beta_i^{-1} [h(x^i(\omega))-h(x^{i+1}(\omega))]}\\
    \leq & \frac{1}{2\sqrt{\cona\epsilon}}
    \left[L(\epsilon A_k)+\Phi_{\B}(h(x^k(\omega))-h^*)\right]
    \leq \frac{1}{\sqrt{\cona\epsilon}}
    \Phi_{\B}(\Psi_{\B}^{-1}(\epsilon A_k)),
\end{aligned}
\]
where the last inequality follows from \eqref{ineq:almost-sure-transfer-condition} and the monotonicity of $\Phi_{\B}$ and $\Psi_{\B}^{-1}$. If $K_*=+\infty$, let $n\to\infty$; otherwise, take $n=K_*-1$ and use $x^i(\omega)=x^{K_*}(\omega)$ for all $i\geq K_*$, yielding the same bound for $\sum_{i=k}^{\infty}\norm{x^{i+1}(\omega)-x^i(\omega)}$ in both cases. The same bound holds trivially for $k\geq K_*$ when $K_*<+\infty$. Thus, $\{x^k(\omega)\}_{k\ge 0}$ converges to some $x^*(\omega)\in\argmin_{x\in\B}h(x)$, and the desired inequality follows from the preceding tail bound.}
This completes the proof.
\end{proof}

\revise{
    Proposition~\ref{prop:iter-global-as} may be of independent interest. It provides a unified approach to analyzing KL-type convergence rates for both gradient-descent-type and proximal-gradient-type algorithms: one first establishes a convergence rate for the function values and then transfers it to the iterates using only the weak descent condition. A detailed development of this application is beyond the scope of the present paper and is therefore omitted.
}

\subsubsection{Proof of Theorem~\ref{th_general_exp_rate1}}
We begin with assertion~\ref{Th_exp_1}. 
By inequality~\eqref{eq:random-convergence}, applying Proposition~\ref{prop:exp-est} with $\chi(t)= \Psi_{\B}^{-1}(\Psi_{\B}(h(x^0)-h^*)+\revise{\cona} t)$ and $X_k = z_k$, we obtain inequality~\eqref{eq:hxk-rate-exp}. The existence of the limit $x^*$ and inequality~\eqref{eq:ite-rate-exp_Th3} follow directly from Proposition~\ref{prop:iter-global-exp}. It remains to prove that $x^*\in \argmin_{x\in \B} h(x)$ almost surely. From \eqref{eq:hxk-rate-exp} and Lemma~\ref{lem:functions}\ref{lem_func_1}, we have $ \mathbb{E}[h(x^k)-h^*]\to 0$, which implies $h(x^k)\to h^*$ in $L^1$. Therefore, by \cite[Theorem~4.3.3]{junghenn2018principles}, there exists a subsequence $ \{k_n\}_n $ such that $ h(x^{k_n}) \to h^* $ almost surely, which in turn implies that $x^*\in \argmin_{x\in \B} h(x)$. 

Next, we prove assertion~\ref{Th_exp_2}. Define the event
\[
E = \left\{\omega: \liminf_{k\rightarrow \infty}\frac{\sum_{i=0}^{k-1} \alpha_i z_i}{\sum_{i=0}^{k-1} \alpha_i} \geq \revise{c_2}\right\}. 
\]
Under Assumption~\ref{assmp:step-size}, applying Proposition~\ref{prop:as-bound} with $X_k = z_k$ yields $\mathbb{P}(E) = 1$. Define
\[
K_2 = \min\left\{N:\Psi_{\B}(h(x^0)-h^*)+\revise{\cona}\sum_{i=0}^{k-1} \alpha_i z_i \geq \revise{\frac{3\cona \conb}{4}}\revise{A_k},~~\forall~k\geq N \right\}. 
\]
Then, $K_2$ is finite on $E$. By the definition of $K_2$, Lemma~\ref{lem:random-convergence} and the monotonicity of $\Psi_\B^{-1}$, 
we obtain \eqref{eq:global-hxk-asrate}.
Moreover, by Assumption~\ref{assmp:step-size}, there exists a deterministic $N$ such that
\[
\revise{\frac{3}{4}}\revise{A_k} \geq \revise{\frac{1}{2}} \revise{A_{k+1}},\quad \forall~k\geq N.
\]
Define $K_3 = \max\{K_2, N\}$, which is also finite on $E$. By inequality~\eqref{eq:global-hxk-asrate}, we have on $E$ that
\[
    h(x^k)-h^*  \leq \Psi_{\B}^{-1} \left( \revise{\frac{\cona \conb}{2}} \revise{A_{k+1}} \right) \quad \forall  k\geq K_3.
\]
Applying Proposition~\ref{prop:iter-global-as} with $\epsilon = \revise{\frac{\cona \conb}{2}}$, we obtain \eqref{eq:global-ite-asrate}.

\subsection{Convergence analysis under local error bounds}\label{sec:localEB}
Our next goal is to analyze the algorithms in Assumption~\ref{assmp:basic} under a local error bound condition defined below. From now on, we denote by $F = g^{-1}(\{0\})\cap \B$ the target set.

The following two events will be crucial to our analysis in this section:
\begin{equation}\label{eq:events}
E_0  = \left\{\omega: g(x^k)\rightarrow 0,\  \liminf_{k\rightarrow \infty}\frac{\sum_{i=1}^k \alpha_i z_i}{\sum_{i=1}^k \alpha_i} \geq \revise{c_2}\right\}\quad\text{and}\quad E_1 = \{\omega: x^k\text{ is bounded}\}.
\end{equation}
We now present our main convergence rate result in the local error bound setting. 

\begin{theorem}\label{thm:local-EB}
    Suppose that Assumptions~\ref{assmp:basic},~\ref{assmp:step-size} and~\ref{assmp:local-EB} hold and that $1/\sqrt{\phi_{\bar{x}}}$ is integrable near $0$ for all $\bar{x}\in F$. Then, there exists a random variable $x^*$ such that $x^*\in F$ and $x^k\rightarrow x^*$ on $E_0\cap E_1$. Moreover, the following hold on $E_0\cap E_1$.
    \begin{enumerate}[label=\textup{\textrm{(\roman*)}}]
        \item If $1/{\phi_{x^*(\omega)}}$ is integrable near $0$, then the sequence $\{x^k(\omega)\}_{k\ge 0}$ terminates in finitely many iterations.
        \item \label{cor:localEB-ii} If $1/{\phi_{x^*(\omega)}}$ is not integrable near $0$, then there exists a positive integer $K_4(\omega)$ such that for all $k\geq K_4(\omega)$,
        \[
        h(x^{k}(\omega))-h^\infty(\omega) \leq \Psi_{x^*(\omega)}^{-1} \left( \revise{\frac{\cona \conb}{2}} \revise{A_k} \right)
        \]
        and 
        \[
        \norm{x^k-x^*} \leq \revise{\frac{\sqrt{2}}{\cona\sqrt{\conb}}}\Phi_{x^*(\omega)} \left( \Psi_{x^*(\omega)}^{-1} \left( \revise{\frac{\cona \conb}{2}} \revise{A_k} \right) \right).
        \]
    \end{enumerate}
\end{theorem}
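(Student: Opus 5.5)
Fix $\omega\in E_0\cap E_1$ throughout; all arguments are pathwise. The plan is a KL-type localization adapted to the random descent \eqref{eq:Random-Descent}.

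\textbf{Step 1: the limit set.} Since $h(x^k(\omega))$ is non-increasing and bounded below by $h^*$, it converges to some $h^\infty(\omega)$. Let $\mathcal{A}(\omega)$ be the set of cluster points of $\{x^k(\omega)\}$. Boundedness ($E_1$) makes $\mathcal{A}(\omega)$ nonempty and compact, with $\dist(x^k,\mathcal{A})\to 0$. Closedness of $\B$, continuity of $g$ and $g(x^k)\to0$ ($E_0$) give $\mathcal{A}\subseteq F$. Continuity of $h$ gives $\mathcal{A}\subseteq h^{-1}(h^\infty)$. Apply Lemma~\ref{lem:uniformization} with $C=\mathcal{A}(\omega)$ and $v=h^\infty$ to get $\delta_C,\eta_C$ and $\phi_{\mathcal A}=\min_j\phi_{\bar x_j}$. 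Each $1/\sqrt{\phi_{\bar x_j}}$ is integrable near $0$, so $1/\sqrt{\phi_{\mathcal A}}$ is too, since $1/\sqrt{\min}\le\sum_j 1/\sqrt{\phi_{\bar x_j}}$. Choose $K_0$ such that for $k\ge K_0$ we have $\dist(x^k,\mathcal A)<\delta_C$ and $h(x^k)<h^\infty+\eta_C$. If $h(x^k)=h^\infty$ for some $k$, then \eqref{eq:basic_1} forces stationarity and the claims are trivial. Otherwise the uniformized bound $\phi_{\mathcal A}(h(x^k)-h^\infty)\le g(x^k)$ holds for all $k\ge K_0$.

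\textbf{Step 2: convergence via finite length.} Here I would avoid any use of \eqref{eq:SDC2} and instead mimic Proposition~\ref{prop:iter-global-as}. Combining the descent \eqref{eq:Random-Descent} with the local bound gives $r_{k+1}\le r_k-\cona\alpha_kz_k\phi_{\mathcal A}(r_k)$, where $r_k=h(x^k)-h^\infty$. This is the exact analogue of \eqref{eq:Descent-Global} on the tail. The chain Lemma~\ref{lem:rate-preliminary}\ref{rate-preliminary1} then yields $\Psi_{\mathcal A}(r_k)\ge\Psi_{\mathcal A}(r_{K_0})+\cona\sum_{i=K_0}^{k-1}\alpha_iz_i$. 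The $\liminf$ condition in $E_0$ gives $\sum_{i=K_0}^{k-1}\alpha_iz_i\ge\tfrac34\conb A_k$ for large $k$ (using $A_k\to\infty$), hence $r_k\le\Psi_{\mathcal A}^{-1}(\tfrac{\cona\conb}{2}A_{k+1})$ eventually. There are two cases.
\begin{itemize}
\item If $1/\phi_{\mathcal A}$ is integrable, the bound $\Psi_{\mathcal A}(0)<\infty$ forces $r_k=0$ in finite time, so the path terminates and converges.
\item Otherwise, Proposition~\ref{prop:iter-global-as} applied to the tail (with $\B$ replaced by the localized data) gives finite length. Hence $x^k\to x^*(\omega)$ and $\mathcal A=\{x^*\}$.
\end{itemize}
The caveat is that Proposition~\ref{prop:iter-global-as} is stated for GUEB. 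Its proof is purely pathwise, however, and uses only \eqref{eq:basic_1}, monotonicity of $r_k$, and the rate hypothesis, so it transfers verbatim to $k\ge K_0$ with $h^*$ replaced by $h^\infty$. Measurability of $x^*$ follows because it is a pointwise limit.

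\textbf{Step 3: sharpening to $\phi_{x^*}$.} Once $\mathcal A=\{x^*\}$, I would redo Step 2 with $C=\{x^*\}$, so that $\phi_C=\phi_{x^*}$ directly from Assumption~\ref{assmp:local-EB}. This gives statement (i) if $1/\phi_{x^*}$ is integrable, and otherwise the two rates in (ii) with constants $\tfrac{\cona\conb}{2}$ and $\sqrt2/(\cona\sqrt{\conb})$, exactly as in the proof of Theorem~\ref{th_general_exp_rate1}\ref{Th_exp_2}. One shift is needed: the tail recursion starts at $K_0$, so the accumulated sum is $\sum_{i=K_0}^{k-1}\alpha_iz_i$ rather than starting at $0$. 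This is harmless because $A_{K_0}$ is a fixed finite number and $\Psi_{x^*}(r_{K_0})$ is finite. The $\tfrac34$ slack absorbs the shift, the index offset between $\sum_{i=1}^k$ in $E_0$ and $\sum_{i=0}^{k-1}$, and the passage from $A_{k+1}$ to $A_k$, by taking $K_4(\omega)$ large.

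\textbf{Main obstacle.} The delicate point is Step 2's localization. Before convergence is known, the modulus must be uniform over the whole limit set, which is why Lemma~\ref{lem:uniformization} is needed. One must also check that the transfer argument never needs the iterates to leave and re-enter the neighborhood. It does not, because the neighborhood condition $\dist(x^k,\mathcal A)<\delta_C$ holds for all $k\ge K_0$ automatically from the cluster-set property, not from a length estimate. That is what lets us sidestep the usual KL bootstrapping.
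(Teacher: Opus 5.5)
Your proposal is correct and follows essentially the same route as the paper. It localizes on the compact limit set $\mathcal{A}(\omega)$ via Lemma~\ref{lem:uniformization}, then runs the random descent recursion \eqref{eq:Random-Descent} on the tail, using the $\liminf$ condition in $E_0$ with the $\frac34$ slack. It then applies the pathwise argument of Proposition~\ref{prop:iter-global-as} on the tail to get convergence, hence $\mathcal{A}(\omega)=\{x^*(\omega)\}$, and finally redoes the estimates with $\phi_{x^*(\omega)}$. Your remark that the neighborhood condition holds on the entire tail by the cluster-set property alone, with no KL-style bootstrapping, is exactly what makes the paper's index $N_1(\omega)$ finite.
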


\begin{proof}
    For any $\omega\in E_0\cap E_1$, denote by $\mathcal{A}(\omega)$ the set of limit points of $\{x^k(\omega)\}_{k\ge 0}$.
    Then, $\mathcal{A}(\omega)$ is compact,   $\mathcal{A}(\omega) \subseteq F$ by the continuity of $g$,  and $h(x) = h^\infty(\omega)\in h(F)$ for all $x\in \mathcal{A}(\omega)$. By Lemma~\ref{lem:uniformization}, there exist positive constants $\delta_{\mathcal{A}(\omega)}$, $\eta_{\mathcal{A}(\omega)}$ and a finite set $\{\bar{x}_j\}_{j=1,\dots,l}\subseteq {\mathcal{A}(\omega)}$ such that
        \[
        \phi_{\mathcal{A}(\omega)}(h(x)-h^\infty(\omega)) \leq g(x)
        \]
    for any $x\in S$ satisfying $h^\infty(\omega)<h(x)<h^\infty(\omega)+\eta_{\mathcal{A}(\omega)}$ and $\dist(x,{\mathcal{A}(\omega)})<\delta_{\mathcal{A}(\omega)}$, where $\phi_{\mathcal{A}(\omega)} = \min_{j=1,\dots,l} \phi_{\bar{x}_j}$ is defined on $[0,\eta_{\mathcal{A}(\omega)})$. Since $1/\sqrt{\phi_{\bar{x}_j}}$ is integrable near $0$ for all $j \in [l]$, it follows that $1/\sqrt{\phi_{\mathcal{A}(\omega)}}$ is also integrable near $0$. Define
    \[
    N_1(\omega) = \min\{N: h^\infty(\omega)\leq h(x^k(\omega))<h^\infty(\omega)+\eta_{\mathcal{A}(\omega)},\ \dist(x^k(\omega),{\mathcal{A}(\omega)})<\delta_{\mathcal{A}(\omega)} \ \forall k\geq N\}.
    \]
    Then, $N_1(\omega)<\infty$. For any $k\geq N_1(\omega)$, by Lemma~\ref{lem:rate-preliminary}\ref{rate-preliminary1} and Lemma~\ref{lem:pre-stochastic}, we have
    \[
    \Psi_{\mathcal{A}(\omega)}(h(x^{k+1}(\omega))-h^\infty(\omega))\geq \Psi_{\mathcal{A}(\omega)}(h(x^{k}(\omega))-h^\infty(\omega)) + \revise{\cona}\alpha_k z_k(\omega),
    \]
    whenever $h(x^{k}(\omega))>h^\infty(\omega)$. Since $\alpha_k z_k$ is not summable on $E_0$, if $1/\phi_{\mathcal{A}(\omega)}$ is integrable near $0$, then the algorithm terminates in finitely many iterations. If $1/\phi_{\mathcal{A}(\omega)}$ is not integrable near $0$, following similar arguments as in the proof of Lemma~\ref{lem:random-convergence} but replacing the global error bound in Assumption~\ref{assmp:Global-EB} by the uniformized error bound in Lemma~\ref{lem:uniformization}, we obtain 
    \[
    h(x^{k}(\omega))-h^\infty(\omega) \leq \Psi^{-1}_{\mathcal{A}(\omega)} \!\! \left(\! \Psi_{\mathcal{A}(\omega)}(h(x^{N_1(\omega)}(\omega))-h^\infty(\omega)) + \revise{\cona}\sum_{i = N_1(\omega)}^{k-1} \alpha_i z_i(\omega) \!\right) \ \forall k>N_1(\omega).
    \]
    Since $\Psi$ is non-negative and $\omega\in E_0$, there exists a integer $N_2(\omega) > N_1(\omega) $ such that
    \[
    \Psi_{\mathcal{A}(\omega)}(h(x^{N_1(\omega)}(\omega))-h^\infty(\omega)) + \revise{\cona}\sum_{i = N_1 (\omega) }^{k-1} \alpha_i z_i \geq \revise{\frac{3\cona\conb}{4}} \revise{A_k}\geq \revise{\frac{\cona \conb}{2}} \revise{A_{k+1}} \quad\forall k\geq N_2(\omega).
    \]
    Thus, for all $k\geq N_2(\omega)$, we have
    \[
    h(x^{k}(\omega))-h^\infty(\omega) \leq \Psi_{\mathcal{A}(\omega)}^{-1} \left( \revise{\frac{3\cona \conb }{4}} \revise{A_k} \right)\leq \Psi_{\mathcal{A}(\omega)}^{-1} \left( \revise{\frac{\cona \conb}{2}} \revise{A_{k+1}} \right) .
    \]
    Following similar arguments as in the proof of Proposition~\ref{prop:iter-global-as} but replacing the global error bound in Assumption~\ref{assmp:Global-EB} by the uniformized error bound in Lemma~\ref{lem:uniformization}, $\{x^k(\omega)\}_{k\ge 0}$ converges to some $x^*(\omega)\in F$ such that 
    \[
    \norm{x^k(\omega)-x^*(\omega)} \leq \revise{\frac{\sqrt{2}}{\cona\sqrt{\conb}}} \Phi_{\mathcal{A}(\omega)} \left( \Psi_{\mathcal{A}(\omega)}^{-1} \left( \revise{\frac{\cona \conb }{2}} \revise{A_k} \right) \right) \quad \forall k \ge N_2 (\omega).
    \]
    In particular, $\mathcal{A}(\omega) = \{x^*(\omega)\}$. Substituting $\mathcal{A}(\omega)$ with $x^*(\omega)$ in the previous argument and setting $K_4(\omega) = N_2(\omega)$ yields the desired inequality. Finally, extending the definition of $x^* $ by setting $x^*(\omega) = 0$ for $\omega\not\in E_0\cap E_1$ completes the proof.
\end{proof}

The following result is immediate from Theorem~\ref{thm:local-EB} and Lemma~\ref{lem:g^k_vanishes}.

\begin{corollary}\label{cor:localEB}
    Suppose that Assumptions~\ref{assmp:basic},~\ref{assmp:step-size} and~\ref{assmp:local-EB} hold with a compact $ \B$, \revise{that $\Gamma\circ g$ is uniformly continuous on $S$ for some strictly increasing function $\Gamma:\R_+\rightarrow \R$}, and that $ 1/\sqrt{\phi_{\bar{x}}} $ is integrable near $0$ for all $ \bar{x} \in F $. Then, there exists a random variable $x^*$ such that $x^*\in F$ and $x^k\rightarrow x^*$ almost surely. Moreover, $\mathbb{P}(E_0) = 1$ and the following hold on $E_0$.
    \begin{enumerate}[label=\textup{\textrm{(\roman*)}}]
        \item If $1/{\phi_{x^*(\omega)}}$ is integrable near $0$, then the sequence $\{x^k(\omega)\}_{k\ge 0}$ terminates in finitely many iterations.
        \item If $1/{\phi_{x^*(\omega)}}$ is not integrable near $0$, then there exists a positive integer $K_4(\omega)$ such that for all $k\geq K_4(\omega)$,
        \[
        h(x^{k}(\omega))-h^\infty(\omega) \leq \Psi_{x^*(\omega)}^{-1} \left( \revise{\frac{\cona \conb}{2}} \revise{A_k} \right)
        \]
        and 
        \[
        \norm{x^k (\omega) - x^*(\omega) } \leq \revise{\frac{\sqrt{2}}{\cona\sqrt{\conb}}} \Phi_{x^*(\omega)} \left( \Psi_{x^*(\omega)}^{-1} \left( \revise{\frac{\cona \conb}{2}} \revise{A_k} \right) \right) .
        \]
    \end{enumerate}
\end{corollary}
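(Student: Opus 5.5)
The corollary follows from Theorem~\ref{thm:local-EB} once we show that $\mathbb{P}(E_0\cap E_1)=1$. The pathwise conclusions (i)--(ii) are then exactly those of Theorem~\ref{thm:local-EB} on a probability-one event, and the limit $x^*\in F$ is the random variable constructed there.

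First, $E_1$ is the whole sample space. By Assumption~\ref{assmp:basic}\ref{basic_2}, every iterate lies in $\B$, and $\B$ is compact. Hence every trajectory is bounded and $\mathbb{P}(E_1)=1$.

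Second, we split $E_0$ into its two defining conditions and show each holds almost surely.
\begin{itemize}
\item \emph{The condition $g(x^k)\to0$.} Since $\B$ is compact and $g,h$ are continuous, $h^*>-\infty$. Assumption~\ref{assmp:basic}\ref{basic_1} holds, Assumption~\ref{assmp:step-size} holds, and $\Gamma\circ g$ is uniformly continuous on $\B$ by hypothesis. So Lemma~\ref{lem:g^k_vanishes} applies directly and gives $g(x^k)\to0$ almost surely.
\item \emph{The liminf condition on $\sum\alpha_iz_i$.} Lemma~\ref{lem:pre-stochastic} supplies $\cF_{k+1}$-measurable variables $z_k$ with $0\le z_k\le\conc$ and $\Exp[z_k\mid\cF_k]\ge\conb$. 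These are exactly the hypotheses of Question~\ref{q:2} with $a=\conb$ and $b=\conc$. Proposition~\ref{prop:as-bound} then yields $\liminf_k \sum_{i<k}\alpha_iz_i/A_k\ge\conb$ almost surely.
\end{itemize}

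The only mismatch is an index shift: $E_0$ is written with sums $\sum_{i=1}^k$, while Proposition~\ref{prop:as-bound} uses $\sum_{i=0}^{k-1}$. These differ by the bounded terms $\alpha_0z_0$ and $\alpha_kz_k\le\bar\alpha\conc$, whereas $A_k\to\infty$. So the two liminfs coincide. Intersecting the two probability-one events gives $\mathbb{P}(E_0)=1$.

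Finally, $E_0\cap E_1=E_0$ has probability one. Theorem~\ref{thm:local-EB} therefore gives $x^k\to x^*\in F$ almost surely, together with (i) and (ii) on $E_0$. There is no real obstacle: the argument only checks hypotheses, and the index shift in $E_0$ is the one point needing a remark.
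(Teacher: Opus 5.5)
Your proposal is correct and follows the paper's proof. The paper argues the same way: $E_1=\Omega$ by boundedness of $\B$, $\mathbb{P}(E_0)=1$ by Lemma~\ref{lem:g^k_vanishes} together with Proposition~\ref{prop:as-bound} applied to the $z_k$ of Lemma~\ref{lem:pre-stochastic}, and then Theorem~\ref{thm:local-EB} applies. Your remark on the index shift in the definition of $E_0$ is a detail the paper leaves implicit, and you handle it correctly, since both numerator and denominator change only by bounded terms while $A_k\to\infty$.
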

\begin{proof}
    Note that $E_1 = \Omega$ since $\B$ is bounded and that $\mathbb{P}(E_0) = 1$ by Lemma~\ref{lem:g^k_vanishes} and Proposition~\ref{prop:as-bound}. The conclusions then follow directly from Theorem~\ref{thm:local-EB}.
\end{proof}

\section{Applications}\label{sec:applications}

\subsection{\revise{Generalized randomized subspace descent}}
\label{section_matrix_sketched_gd}
\revise{

We apply our framework to generalized randomized subspace descent (GRSD) for smooth unconstrained minimization problems~\eqref{pro_min}. GRSD encompasses several popular randomized algorithms, including randomized subspace descent (RSD)~\cite{frongillo2015convergence}, randomized fast subspace descent (RFASD)~\cite{chen2020randomized}, stochastic subspace descent (SSD)~\cite{kozak2021stochastic}, randomized block coordinate descent (RBCD), and randomized coordinate descent (RCD).

Let $\{B_k\}_{k\geq0}$ be a sequence of random symmetric positive semidefinite matrices such that $B_k$ is $\cF_{k+1}$-measurable. The GRSD update reads
\begin{equation}\label{eq:matrix-sketched-GD}
    x^{k+1}
    =
    x^k-\alpha_k B_k\nabla f(x^k)
    \qquad \forall k\geq0. 
\end{equation}
Let \(\mathscr B\subseteq\mathbb S_+^d\) denote the collection of possible realizations of $B_k$. We say that \(f\) is upper \(L_B\)-smooth along \(B\in\mathscr B\), for some \(L_B>0\), if
\begin{equation*}\label{eq:matrix-directional-smoothness}
    f(x+Bu)
    \leq
    f(x)
    +
    \langle\nabla f(x),Bu\rangle
    +
    \frac{L_B}{2}\langle u,Bu\rangle
    \qquad
    \forall x,u\in\mathbb R^d.
\end{equation*}
By replacing each $B\in\mathscr B$ with $B/L_B$, we may assume without loss of generality that $f$ is upper $1$-smooth along every $B\in\mathscr B$, i.e.,
\begin{equation}\label{eq:normalized-matrix-directional-smoothness}
    f(x+Bu)
    \leq
    f(x)
    +
    \langle\nabla f(x),Bu\rangle
    +
    \frac{1}{2}\langle u,Bu\rangle
    \qquad
    \forall x,u\in\mathbb R^d.
\end{equation}
Taking $B=B_k$ and
$u=-\alpha_k\nabla f(x^k)$ in
\eqref{eq:normalized-matrix-directional-smoothness} then yields
\begin{equation}\label{eq:matrix-sketch-descent}
\begin{aligned}
    f(x^{k+1})
    &\leq
    f(x^k)
    -
    \alpha_k
    \left(1-\frac{\alpha_k}{2}\right)
    \langle\nabla f(x^k),B_k\nabla f(x^k)\rangle .
\end{aligned}
\end{equation}
Consequently, it suffices to impose the deterministic stepsize condition
\begin{equation}\label{eq:matrix-sketched-stepsize}
    0<\alpha_k\leq\bar\alpha<2
    \qquad
    \forall k\geq0.
\end{equation}
For this application, we set
\[
    \B
    :=
    \{x\in\mathbb R^d:f(x)\leq f(x^0)\}, \qquad h(x):= f(x), \qquad g(x):= \|\nabla f(x)\|^2.
\]
We further suppose that there exist constants $M,\nu>0$ such that
\begin{equation}\label{eq:matrix-sketch-assumption}
    0\preceq B_k\preceq MI,
    \qquad
    \Exp[B_k^2\mid\cF_k]\succeq\nu I
    \quad\as
    \qquad
    \forall k\geq0.
\end{equation}

We now present several concrete examples of GRSD methods and explain how the required conditions hold.

\begin{example}
For the GRSD method~\eqref{eq:matrix-sketched-GD}, the support of $B_k$ could either be discrete or continuous.
Under the discrete support setting, suppose that $B_k$ takes values in the finite
family $\mathscr B=\{\mathsf B_i\}_{i=1}^m$.
This setting includes the randomized fast subspace descent (RFASD) method
of~\cite{chen2020randomized}, where
\[
    \mathsf B_i
    =
    \frac{1}{L_{A,i}}I_iA_i^{-1}I_i^\top.
\]
Here, $I_i$ embeds the selected subspace into the ambient space,
$I_i^\top$ is its adjoint, and $A_i^{-1}$ is the local
preconditioner. The subspace-dependent normalization $1/L_{A,i}$ is
absorbed into $\mathsf B_i$, so the RFASD update is recovered from
\eqref{eq:matrix-sketched-GD} by taking $\alpha_k=1$. Because the support is finite, the upper bound in
\eqref{eq:matrix-sketch-assumption} holds automatically with $M=\max_{i\in[m]}\|\mathsf B_i\|$. Moreover, suppose that every $B_k$ is sampled with uniformly
positive conditional probability:
\[
    \mathbb P(B_k=\mathsf B_i\mid\cF_k)\geq\rho>0
    \qquad
    \forall i\in[m],\ k\geq0,
\]
and that their ranges collectively span the ambient space.
Then
\[
    \Exp[B_k^2\mid\cF_k]
    =
    \sum_{i=1}^m
    \mathbb P(B_k=\mathsf B_i\mid\cF_k)\mathsf B_i^2
    \succeq
    \rho\sum_{i=1}^m\mathsf B_i^2
    \succeq
    \nu I,
\]
where $\nu:= \rho\lambda_{\min}\!\left(\sum_{i=1}^m\mathsf B_i^2\right)>0$ provided that $\sum_{i=1}^m\range(\mathsf B_i) = \R^d$. 

Many classical randomized subspace methods correspond to the
special case
\[
    \mathsf B_i=\frac{1}{L_i}P_{V_i},
\]
where $P_{V_i}$ is the orthogonal projector onto $V_i$. A finite family
of arbitrary, possibly overlapping, subspaces gives the projection-based
randomized subspace descent method of~\cite{frongillo2015convergence}.
Choosing the $V_i$ to be coordinate blocks recovers RBCD,
while taking $V_i=\mathrm{span}\{e_i\}$ recovers RCD. 

A particularly interesting example under the continuous support setting is the
stochastic subspace descent method of~\cite{kozak2021stochastic}:
\[
    x^{k+1}
    =
    x^k-\beta P_kP_k^\top\nabla f(x^k),
\]
where $P_k\in\mathbb R^{d\times\ell}$ satisfies $P_k^\top P_k=\frac{d}{\ell}I$ and $\Exp[P_kP_k^\top]=I$.
If $\nabla f$ is globally $\lambda$-Lipschitz continuous, this method is
recovered by setting $B_k=\frac{\ell}{\lambda d}P_kP_k^\top$ and $\alpha_k=\frac{\lambda d}{\ell}\beta$.
Since $(P_kP_k^\top)^2=(d/\ell)P_kP_k^\top$, it follows that $0\preceq B_k\preceq\frac{1}{\lambda}I$. If $P_k$ is independent of $\cF_k$, as in the i.i.d.\ setting of
\cite{kozak2021stochastic}, then
\[
    \Exp[B_k^2\mid\cF_k]
    =
    \frac{\ell^2}{\lambda^2d^2}
    \Exp[(P_kP_k^\top)^2]
    =
    \frac{\ell}{\lambda^2d}
    \Exp[P_kP_k^\top]
    =
    \frac{\ell}{\lambda^2d}I.
\]
Hence,~\eqref{eq:matrix-sketch-assumption} holds with $M=\frac{1}{\lambda}$ and $\nu=\frac{\ell}{\lambda^2d}$.
Scaled Haar-distributed orthonormal frames provide a standard
sampling scheme satisfying these identities. More generally, allowing
$H_k$ in $B_k=U_kH_kU_k^\top$ to be non-scalar gives continuously
sampled preconditioned subspace descent.
\end{example}

\begin{proposition}\label{prop:matrix-sketched-GD}
    Consider the GRSD method~\eqref{eq:matrix-sketched-GD}. Suppose that conditions~\eqref{eq:normalized-matrix-directional-smoothness},~\eqref{eq:matrix-sketched-stepsize} and~\eqref{eq:matrix-sketch-assumption} hold. Then Assumption~\ref{assmp:basic} holds on $\B$ with $\cona=(1-\frac{\bar\alpha}{2})/M$, $\conb=\nu$, $\conc=M^2$, and the same deterministic stepsizes $\{\alpha_k\}_{k\geq0}$ as in~\eqref{eq:matrix-sketched-GD}.
\end{proposition}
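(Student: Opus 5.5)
We verify the three inequalities of Assumption~\ref{assmp:basic}\ref{basic_1} directly from the update $x^{k+1}-x^k=-\alpha_kB_k\nabla f(x^k)$, and then check \ref{basic_2} together with the standing requirements on $\B$, $g$, and $h$. Throughout we use the shorthand $v_k:=\nabla f(x^k)$ and $q_k:=\langle v_k,B_kv_k\rangle\ge 0$.

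\emph{Sufficient decrease.} The key matrix fact is that for a symmetric $B$ with $0\preceq B\preceq MI$ one has $B^2\preceq MB$. This holds because $B$ and $B^2$ commute and $\lambda^2\le M\lambda$ for every eigenvalue $\lambda\in[0,M]$. It gives
\[
\|x^{k+1}-x^k\|^2=\alpha_k^2\langle v_k,B_k^2v_k\rangle\le \alpha_k^2Mq_k .
\]
Combining this with \eqref{eq:matrix-sketch-descent} and $1-\alpha_k/2\ge 1-\bar\alpha/2>0$ yields
\[
f(x^k)-f(x^{k+1})\ge \alpha_k\Bigl(1-\frac{\bar\alpha}{2}\Bigr)q_k\ge \frac{1-\bar\alpha/2}{M}\cdot\frac{\|x^{k+1}-x^k\|^2}{\alpha_k},
\]
which is \eqref{eq:basic_1} with $\cona=(1-\bar\alpha/2)/M$.

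\emph{Upper bound \eqref{eq:basic_4}.} Since $B_k^2\preceq M^2I$, we get $\|x^{k+1}-x^k\|^2\le\alpha_k^2M^2\|v_k\|^2=\conc\alpha_k^2g(x^k)$ with $\conc=M^2$. This holds pointwise.

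\emph{Expected progress \eqref{eq:expbound}.} Because $x^k$, and hence $v_k$, is $\cF_k$-measurable and $\alpha_k$ is deterministic, we can pull $v_k$ out of the conditional expectation:
\[
\Exp[\|x^{k+1}-x^k\|^2\mid\cF_k]=\alpha_k^2\,v_k^\top\Exp[B_k^2\mid\cF_k]v_k\ge\nu\alpha_k^2\|v_k\|^2 \quad\text{a.s.},
\]
giving $\conb=\nu$. The one technical step is justifying the interchange, since $v_k$ is random. Because $B_k^2$ is bounded entrywise by $M^2$ it is integrable, so we expand $v_k^\top B_k^2v_k=\sum_{i,j}v_{k,i}v_{k,j}(B_k^2)_{ij}$ and take out each $\cF_k$-measurable factor $v_{k,i}v_{k,j}$.

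We also need the admissibility condition $\conc\ge\conb$, i.e.\ $\nu\le M^2$. This follows by noting $\Exp[B_k^2\mid\cF_k]\preceq M^2I$, which forces $\nu\le M^2$.

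\emph{Remaining requirements.} Monotonicity from \eqref{eq:basic_1} gives $f(x^k)\le f(x^0)$, so $\{x^k\}\subseteq\B$ and \ref{basic_2} holds. The set $\B$ is closed by continuity of $f$. The functions $g=\|\nabla f\|^2\ge0$ and $h=f$ are continuous. For $h^*>-\infty$ we invoke the standing assumption of problem~\eqref{pro_min} that $f$ attains its minimum.

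We expect no real obstacle. The only points needing care are the matrix inequality $B^2\preceq MB$, which is what yields the constant $\cona$, and the measurability argument for the conditional expectation.
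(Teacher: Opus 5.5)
Your proof is correct and follows the paper's argument essentially step for step. The bound $B_k^2\preceq MB_k$ combined with \eqref{eq:matrix-sketch-descent} gives $\cona$, pulling $\nabla f(x^k)$ out of the conditional expectation gives $\conb=\nu$, and $B_k^2\preceq M^2I$ gives $\conc=M^2$. Your additional checks are correct details the paper leaves implicit: that $\nu\le M^2$ so $\conc\ge\conb$, that $\B$ is closed, and that $h^*>-\infty$.
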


\begin{proof}
    Let $s^k:=x^{k+1}-x^k=-\alpha_k B_k\nabla f(x^k)$.
    Since $0\preceq B_k\preceq MI$, we have
    $B_k^2\preceq MB_k$. Consequently,
    \[
        \|s^k\|^2
        =
        \alpha_k^2
        \left\langle
            \nabla f(x^k),
            B_k^2\nabla f(x^k)
        \right\rangle\\
        \leq
        M\alpha_k^2
        \left\langle
            \nabla f(x^k),
            B_k\nabla f(x^k)
        \right\rangle.
    \]
    Combining this inequality with inequality~\eqref{eq:matrix-sketch-descent} and using $\alpha_k\leq\bar\alpha$ yields
    \[
        f(x^{k+1})
        \leq
        f(x^k)
        -
        \frac{1-\frac{\bar\alpha}{2}}{M\alpha_k}
        \|x^{k+1}-x^k\|^2.
    \]
    Thus,~\eqref{eq:basic_1} holds with $\cona=\frac{1-\frac{\bar\alpha}{2}}{M}$. In particular, $\{f(x^k)\}_{k\geq0}$ is non-increasing, and hence $x^k\in\B$ for all $k\geq0$. Next, since $x^k$ and $\nabla f(x^k)$ are $\cF_k$-measurable, condition~\eqref{eq:matrix-sketch-assumption} gives
    \[
    \begin{aligned}
        \Exp\!\left[
            \|x^{k+1}-x^k\|^2
            \,\middle|\,
            \cF_k
        \right]
        =
        \alpha_k^2
        \left\langle
            \nabla f(x^k),
            \Exp[B_k^2\mid\cF_k]
            \nabla f(x^k)
        \right\rangle
        \geq
        \nu\alpha_k^2
        \|\nabla f(x^k)\|^2
        =
        \nu\alpha_k^2g(x^k).
    \end{aligned}
    \]
    Therefore,~\eqref{eq:expbound} holds with $\conb=\nu$. Finally, the bound $B_k\preceq MI$ implies
    \[
        \|x^{k+1}-x^k\|^2
        =
        \alpha_k^2
        \|B_k\nabla f(x^k)\|^2
        \leq
        M^2\alpha_k^2
        \|\nabla f(x^k)\|^2
        =
        M^2\alpha_k^2g(x^k),
    \]
    which verifies~\eqref{eq:basic_4} with $\conc=M^2$. The proof is complete.
\end{proof}

We next analyze the convergence rates of GRSD by relating the Kurdyka--Łojasiewicz property to our UEB conditions. We first recall the KL property.

\begin{definition}[Kurdyka--Łojasiewicz property]
\label{def:local-KL}
    A function $f\in C^1(\mathbb R^d)$ is said to satisfy the
    Kurdyka--Łojasiewicz\/ (KL) property at
    $\bar{x}\in\mathbb R^d$ if there exist
    $\delta_{\bar{x}},\eta_{\bar{x}}\in(0,+\infty]$ and a continuous,
    concave function $\varphi_{\bar{x}}:[0,\eta_{\bar{x}})\to \R_+$ such that $\varphi_{\bar{x}}(0)=0$, $\varphi_{\bar{x}}$ is continuously
    differentiable on $(0,\eta_{\bar{x}})$, and
    $\varphi_{\bar{x}}'>0$ on $(0,\eta_{\bar{x}})$, with
    \begin{equation}\label{eq:local-KL}
        \varphi_{\bar{x}}'(f(x)-f(\bar{x}))
        \|\nabla f(x)\|
        \geq 1
    \end{equation}
    whenever $x\in\mathbb B(\bar{x},\delta_{\bar{x}})$ and $0<f(x)-f(\bar{x})<\eta_{\bar{x}}$. The function $\varphi_{\bar{x}}$ is called a desingularization function
    of $f$ at $\bar{x}$. We say that $f$ has KL exponent
    $\kappa\in [0,1)$ at $\bar{x}$ if one may choose
    $\varphi_{\bar{x}}(t)=\bar c\,t^{1-\kappa}$ for some $\bar c>0$.
\end{definition}

The following global result is the smooth finite-valued specialization of
the globalization theorem for convex definable functions
in~\cite{Bolte2017Errora}.

\begin{proposition}[Global KL property for convex definable functions]
\label{prop:global-KL-convex-definable}
    Let $f\in C^1(\mathbb R^d)$ be convex, definable, and coercive. Set $X^*:=\argmin f$ and $f^*:=\min f$. Then there exists a
    continuous, concave function
    $\varphi:\R_+\to\R_+$ such that $\varphi(0)=0$,
    $\varphi$ is continuously differentiable on $(0,+\infty)$, and
    $\varphi'>0$ on $(0,+\infty)$, with
    \begin{equation}\label{eq:global-KL-convex-definable}
        \varphi'(f(x)-f^*)\|\nabla f(x)\|
        \geq 1
        \qquad
        \forall x\in\mathbb R^d\setminus X^*.
    \end{equation}
\end{proposition}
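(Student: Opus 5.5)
The plan is to obtain \eqref{eq:global-KL-convex-definable} by specializing the globalization theorem of \cite{Bolte2017Errora}. In the smooth finite-valued case its hypotheses reduce to ours, because convexity gives $\partial f=\{\nabla f\}$ and so the nonsmooth slope $\dist(0,\partial f(x))$ is simply $\|\nabla f(x)\|$. Since I would like a self-contained argument, I will reconstruct the three ingredients directly.

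\emph{Step 1 (a local KL function at each level).} Because $f$ is coercive and convex, $X^*$ is nonempty and compact, and every sublevel set $[f\le f^*+r]$ is compact. Definability of $f$ gives the KL property at every point of $X^*$. By Lemma~\ref{lem:uniformization}, applied to the compact set $X^*$ (equivalently \cite[Lemma~6]{Bolte2014Proximal}), this yields a single desingularizer $\varphi_0$ on some $[0,\eta_0)$, valid on $\{x:\dist(x,X^*)<\delta,\ 0<f(x)-f^*<\eta_0\}$. Convexity and coercivity then remove the neighbourhood restriction. Consider the compact set $[f\le f^*+\eta]\setminus\{\dist(\cdot,X^*)<\delta\}$ for small $\eta$. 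Its elements lie outside $X^*$, so it is empty once $\eta$ is small, because $\min\{f(x):\dist(x,X^*)\ge\delta\}>f^*$. Hence the local KL inequality holds for all $x$ with $0<f(x)-f^*<\eta$.

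\emph{Step 2 (extension to all levels).} Define $m(r):=\inf\{\|\nabla f(x)\|: f(x)-f^*\ge r\}$ for $r>0$. For convex $f$ this is positive and non-decreasing. Indeed, $f(x)-f^*\le\|\nabla f(x)\|\dist(x,X^*)$, and on $[f\le f^*+R]$ the distance to $X^*$ is bounded. For points outside a large sublevel set, convexity along rays from a minimizer shows that $\|\nabla f\|$ is bounded below by the slope at the boundary of that sublevel set. Using this, I set $\varphi(t)=\varphi_0(t)$ for $t<\eta/2$ and, for larger $t$, $\varphi(t)=\varphi(\eta/2)+\int_{\eta/2}^t \psi(s)\,ds$. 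Here $\psi$ is a continuous, positive, non-increasing function with $\psi(s)\ge 1/m(s)$ on $[\eta/2,\infty)$ that agrees with $\varphi_0'$ near $\eta/2$. Such a $\psi$ exists because $1/m$ is non-increasing and positive, and $\varphi_0'$ is non-increasing. This $\varphi$ is concave and $C^1$ on $(0,\infty)$ with $\varphi'>0$. For $f(x)-f^*\ge\eta/2$, inequality \eqref{eq:global-KL-convex-definable} holds since $\varphi'(f(x)-f^*)\ge 1/m(f(x)-f^*)\ge 1/\|\nabla f(x)\|$.

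\emph{Main obstacle.} The delicate step is the gluing in Step 2. It must keep $\varphi'$ non-increasing and continuous across $\eta/2$ while dominating both $\varphi_0'$ and $1/m$. If that fails directly, I would first replace $\varphi_0'$ by the upper envelope $\max\{\varphi_0',\,1/m\}$ on $(0,\eta)$; this is still non-increasing and still works locally. I would then take its least non-increasing continuous majorant, which retains concavity of the integral. Everything else reduces to compactness and the convex gradient inequality.
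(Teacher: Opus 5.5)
Your proposal is correct in substance and follows essentially the route of the globalization theorem of \cite{Bolte2017Errora}; the paper gives no proof of this statement beyond citing that theorem. The route has three parts. First, uniformize the local KL inequality over the compact set $X^*$. Second, use coercivity to extend it to the whole slice $\{0<f-f^*<\eta\}$. Third, use convexity along rays from a minimizer to get $m(r)>0$. In that last step, the quantity you carry along the ray should be the radial derivative at the point $y$ of level $f^*+R$, which convexity bounds below by $R/\|y-p\|$; $\|\nabla f(y)\|$ is not the right quantity.

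The one soft spot is the gluing you flag. A least continuous non-increasing majorant of a non-increasing function exists only if that function is already continuous, and you have not checked this for $\max\{\varphi_0',1/m\}$. You also never verify that this envelope is integrable at $0$. It is, since $m(s)\ge\min\{1/\varphi_0'(s),m(\eta)\}$ for $s<\eta$, by the slice inequality and the monotonicity of $\varphi_0'$.

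Both issues disappear with a simpler construction. Take $C\varphi_0$ on $[0,\eta/2]$ with $C\ge\max\{1,\,1/(m(\eta/2)\varphi_0'(\eta/2))\}$, and extend it affinely with slope $C\varphi_0'(\eta/2)$. This function is concave and $C^1$ with positive derivative. The slice inequality covers $0<f(x)-f^*\le\eta/2$. For $s=f(x)-f^*\ge\eta/2$, one has $\varphi'(s)\ge 1/m(\eta/2)\ge 1/m(s)$ because $m$ is non-decreasing, while $\|\nabla f(x)\|\ge m(s)$ by the definition of $m$.
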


For general, possibly nonconvex, definable functions, one retains the local
KL property at every point; see~\cite{Bolte2007Clarke}.

\begin{proposition}[Local KL property for definable functions]
\label{prop:local-KL-definable}
    Let $f\in C^1(\mathbb R^d)$ be definable. Then $f$ satisfies the KL
    property of Definition~\ref{def:local-KL} at every
    $\bar{x}\in\mathbb R^d$.
\end{proposition}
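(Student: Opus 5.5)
This result is the KL inequality of Kurdyka (and, in the nonsmooth version, of Bolte, Daniilidis and Lewis) for definable $C^1$ functions, so the plan follows the standard route through the o-minimal curve selection and monotonicity machinery rather than anything developed in this paper. We first dispose of the easy case: if $\nabla f(\bar x)\neq 0$, then by continuity $\|\nabla f\|\ge c>0$ on some ball $\mathbb B(\bar x,\delta_{\bar x})$. In that case $\varphi_{\bar x}(t)=t/c$ works with any $\eta_{\bar x}$. Hence it suffices to treat critical points $\bar x$, and after translating we assume $f(\bar x)=0$.

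For a critical point $\bar x$, fix $r>0$ and define the definable function
\[
\mu(t):=\inf\bigl\{\|\nabla f(x)\|: x\in \overline{\mathbb B}(\bar x,r),\ f(x)=t\bigr\},\qquad t\in(0,\eta).
\]
It is definable because $\nabla f$ is definable (derivatives of definable $C^1$ functions are definable) and infima over definable families are definable. The key step is to show that $\mu(t)>0$ for all small $t>0$. We would do this by finiteness of critical values: by definable Sard, the set of critical values of $f$ restricted to a compact ball is finite. So for $r$ and $\eta$ small, the strip $\{0<f<\eta\}\cap\overline{\mathbb B}(\bar x,r)$ contains no critical points, and compactness then gives $\mu(t)>0$. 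By the monotonicity theorem, $\mu$ is continuous and strictly monotone or constant on some $(0,\eta)$ after shrinking.

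Next we build the desingularizer. The aim is a definable $\varphi$ with $\varphi'(t)\ge 1/\mu(t)$. The crux, and the expected main obstacle, is showing that $1/\mu$ is integrable near $0$, which is exactly what makes $\varphi(0)=0$ finite. Following Kurdyka, we use definable curve selection along a gradient-type path. Pick a definable curve $\gamma:(0,\epsilon)\to\mathbb B(\bar x,r)$ with $f(\gamma(t))=t$ and $\|\nabla f(\gamma(t))\|\le 2\mu(t)$, which exists by definable choice. Differentiability of $\gamma$ near $0$ by monotonicity gives $1=\langle\nabla f(\gamma),\gamma'\rangle\le 2\mu(t)\|\gamma'(t)\|$, hence $\int_0^{\epsilon}dt/\mu(t)\le 2\,\mathrm{length}(\gamma)$. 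That length is finite because a definable bounded curve has finite length, which follows from monotonicity of its coordinates. A more careful version of this argument is Kurdyka's ``$\|\nabla (\Psi\circ f)\|\ge 1$'' lemma, and we would simply invoke it. We then set $\varphi(t)=\int_0^t 2/\mu(s)\,ds$. Since $\mu$ is monotone near $0$ by the monotonicity theorem and $\mu(0+)=0$ (because $\bar x$ is critical and nearby points have $f\downarrow0$ along curves), $\mu$ is non-decreasing near $0$. Therefore $\varphi$ is concave, $C^1$ on $(0,\eta)$ after replacing $\mu$ by a continuous definable minorant, and satisfies $\varphi'>0$ and $\varphi(0)=0$.

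Finally, for $x\in\mathbb B(\bar x,r)$ with $0<f(x)<\eta$ we have $\varphi'(f(x))\|\nabla f(x)\|\ge 2\|\nabla f(x)\|/\mu(f(x))\ge 1$, which is exactly \eqref{eq:local-KL}. The case $f(x)<f(\bar x)$ is not required by Definition~\ref{def:local-KL}. In practice we would cite \cite{Bolte2007Clarke} (the $C^1$ case is \cite{Kurdyka1998OnGO}) for the integrability step rather than reprove it.
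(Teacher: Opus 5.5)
Your argument is correct and takes a genuinely different route from the paper, which gives no proof of this proposition and simply defers to \cite{Bolte2007Clarke} (with \cite{Kurdyka1998OnGO} for the smooth case). You instead give a self-contained proof of the $C^1$ case, with four ingredients:
definable Sard to get $\mu(t)>0$ on a punctured strip;
a talweg-type curve $\gamma$ with $f(\gamma(t))=t$ and near-minimal gradient norm;
the estimate $1=\langle\nabla f(\gamma(t)),\gamma'(t)\rangle\le 2\mu(t)\|\gamma'(t)\|$;
and finite length of a bounded definable curve, via monotone coordinates, to get $\int_0 dt/\mu<\infty$.
This integrability step is already complete as written, so your hedge that one needs ``a more careful version'' via Kurdyka's lemma is unnecessary.

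The only loosely justified point is $\mu(0+)=0$. The right reason is this: if $\bar x\in\overline{\{f>0\}}$, then points $x_n\to\bar x$ with $f(x_n)\downarrow0$ give $\mu(f(x_n))\le\|\nabla f(x_n)\|\to\|\nabla f(\bar x)\|=0$, and monotonicity of $\mu$ near $0$ turns this liminf into a limit, forcing $\mu$ to be increasing there. If instead $\bar x\notin\overline{\{f>0\}}$, shrink $r$ and the KL inequality is vacuous. Connectedness of the ball also guarantees that $\{f=t\}\cap\overline{\mathbb B}(\bar x,r)$ is nonempty for all small $t>0$, which your curve selection tacitly uses.

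As for what each route buys: the cited result extends to nonsmooth definable functions via subdifferentials, which the paper does not need here. Your construction instead yields the explicit desingularizer $\varphi(t)=\int_0^t 2/\mu(s)\,ds$. The induced UEB modulus $\phi_{\bar x}=1/(\varphi')^2=\mu^2/4$ is then a level-wise infimum of $g=\|\nabla f\|^2$, in the spirit of Theorem~\ref{thm:GUEB-existence}. Your curve is also precisely the talweg that the paper contrasts with its maximal-growth curve in the proof of Theorem~\ref{Th_KL_general}.
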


The following proposition establishes the correspondence between KL inequalities and UEBs. The proof is straightforward and thus omitted.

\begin{proposition}[From KL inequalities to unified error bounds]
\label{prop:KL-to-UEB}
    The following statements hold.
    \begin{enumerate}[label=\textup{(\roman*)}]
        \item\label{prop:KL-to-UEB-global}
        Suppose that $f$ admits a global desingularization function
        $\varphi$ satisfying~\eqref{eq:global-KL-convex-definable}. Define
        $\phi:\mathbb R_+\to\mathbb R_+$ by
        \[
            \phi(0):=0,
            \qquad
            \phi(t):=\frac{1}{(\varphi'(t))^2}
            \quad\text{for }t>0.
        \]
        Then Assumption~\ref{assmp:Global-EB} holds with the modulus $\phi$.

        \item\label{prop:KL-to-UEB-local}
        Suppose that $f$ satisfies the local KL property at every critical point
        $\bar{x}$ with desingularization function
        $\varphi_{\bar{x}}$. Define
        $\phi_{\bar{x}}:\mathbb R_+\to\mathbb R_+$ by $\phi_{\bar{x}}(0):=0$ and
        \[
            \phi_{\bar{x}}(t)
            :=
            \frac{1}{(\varphi_{\bar{x}}'(t))^2}
            \quad
            \text{for }t\in(0,\eta_{\bar{x}}).
        \]
        Then Assumption~\ref{assmp:local-EB} holds with modulus $\phi_{\bar{x}}$ at any critical point $\bar{x}\in \B$.
    \end{enumerate}
\end{proposition}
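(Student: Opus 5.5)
The plan is to verify the conditions of Assumption~\ref{assmp:Global-EB} (resp.\ Assumption~\ref{assmp:local-EB}) one by one, directly from the KL inequality, with $h=f$, $g=\|\nabla f\|^2$ and $\B$ as chosen for GRSD. The only nontrivial items are the error-bound inequality itself and the monotonicity/positivity of the modulus. The rest is bookkeeping about domains, i.e.\ choosing $\tau$ and $\eta$.

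For part~\ref{prop:KL-to-UEB-global}, fix $x\in\B$. If $f(x)=f^*$, then $\phi(0)=0\le g(x)$. If $f(x)>f^*$, then $x\notin X^*$, and \eqref{eq:global-KL-convex-definable} gives $\varphi'(f(x)-f^*)\|\nabla f(x)\|\ge 1$. Since $\varphi'>0$, we may square and rearrange to get $\|\nabla f(x)\|^2\ge 1/(\varphi'(f(x)-f^*))^2=\phi(f(x)-f^*)$, which is \eqref{eq:globalEB} with $h^*=f^*$. Next, concavity of $\varphi$ makes $\varphi'$ non-increasing on $(0,\infty)$. Because $\varphi'>0$ there, $\phi=1/(\varphi')^2$ is non-decreasing and strictly positive on $(0,\infty)$. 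Together with $\phi(0)=0\le\phi(t)$, this gives item~\ref{ass_g_E_2}. For item~\ref{ass_g_E_3} take $\tau=+\infty$, which is allowed, so that $0\le h(x)-h^*<\tau$ trivially. Here $h^*=\inf_\B f=f^*$, since $\B$ is a sublevel set containing $X^*$.

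For part~\ref{prop:KL-to-UEB-local}, fix a critical point $\bar x\in\B$; this is exactly $\bar x\in F=g^{-1}(0)\cap\B$. Take $\delta_{\bar x}$ and $\eta_{\bar x}$ from Definition~\ref{def:local-KL}. If either equals $+\infty$, replace it by a finite positive number; this only shrinks the region where the inequality is required. For $x\in\B\cap\mathbb B(\bar x,\delta_{\bar x})$ with $f(\bar x)<f(x)<f(\bar x)+\eta_{\bar x}$, \eqref{eq:local-KL} holds. Squaring it as above gives $\phi_{\bar x}(f(x)-f(\bar x))\le\|\nabla f(x)\|^2$, which is item~\ref{ass_l_EB_1}. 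Item~\ref{ass_l_EB_2} follows from the concavity of $\varphi_{\bar x}$ exactly as in the global case, using only the restriction of $\phi_{\bar x}$ to $[0,\eta_{\bar x})$.

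The main subtlety is none of the inequalities; it is the domain conventions. The statement defines $\phi_{\bar x}$ on $\mathbb R_+$ but specifies it only on $(0,\eta_{\bar x})$, whereas Assumption~\ref{assmp:local-EB} needs it only on $[0,\eta_{\bar x})$. I would therefore state explicitly that we work with that restriction, and similarly note that $\varphi'$ is finite and positive wherever it is evaluated, so the reciprocal is well defined.
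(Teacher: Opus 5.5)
Your proposal is correct. Squaring the KL inequality (using $\varphi'>0$) gives the error-bound inequality, concavity of $\varphi$ makes $\phi=1/(\varphi')^2$ non-decreasing and positive, and $\tau=+\infty$ (resp.\ a finite truncation of $\eta_{\bar x}$) handles the domain requirements; the paper omits this proof as straightforward, and your direct verification is exactly the intended argument.
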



In particular, Proposition~\ref{prop:global-KL-convex-definable} yields a
GUEB for every convex definable coercive function, whereas Proposition~\ref{prop:local-KL-definable} yields an LUEB for every general definable function. By the definitions of $\phi$ and $\phi_{\bar{x}}$, we have $1/\sqrt{\phi} = \varphi'$ and $1/ \sqrt{\phi_{\bar{x}}} = \varphi_{\bar{x}}'$,
so both functions are integrable near zero because the corresponding desingularization functions are bounded around zero. In contrast, the following theorem shows that the squared derivative of a KL desingularization function is not integrable near zero under the stated regularity assumptions. Consequently, whenever the theorem applies, both the global and local inverse smoothing functions diverge to $+\infty$ as $t\downarrow0$. Recall that \(\nabla f\) is said to be locally Lipschitz continuous if, for every \(\bar{x}\in\R^d\), there exist a neighborhood \(U\) of \(\bar{x}\) and a constant \(L>0\) such that
\[
    \|\nabla f(x)-\nabla f(y)\|
    \leq
    L\|x-y\|
    \qquad
    \forall x,y\in U.
\]


\begin{theorem}\label{Th_KL_general}
Suppose $f\in C^1(\R^d)$ is a definable function with a locally Lipschitz continuous gradient $\nabla f$, and let $\bar{x}$ be a critical point of $f$ that is not a local maximizer. If $f$ satisfies the KL property at $\bar{x}$ with a desingularization function $\varphi$, then, after possibly shrinking the domain of $\varphi$, the function
\[
    \phi_{\bar{x}}(t):=\frac{1}{(\varphi'(t))^2}
\]
satisfies $\phi_{\bar{x}}(t)=O(t)$ as  $t\downarrow0$.
Consequently, $1/\phi_{\bar x} = (\varphi')^2$ is not integrable near $0$. In particular, $f$ cannot satisfy the KL property with an exponent $\kappa \in [0,1/2)$ at $\bar{x}$.
\end{theorem}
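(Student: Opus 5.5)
The plan is to produce, for every sufficiently small $t>0$, a point $x$ near $\bar{x}$ with $f(x)-f(\bar{x})=t$ and $\|\nabla f(x)\|^2\le Ct$. The KL inequality~\eqref{eq:local-KL} at such a point then gives $\varphi'(t)\ge 1/\|\nabla f(x)\|\ge 1/\sqrt{Ct}$, i.e.\ $\phi_{\bar{x}}(t)\le Ct$. Non-integrability of $(\varphi')^2\ge 1/(Ct)$ near $0$ is then immediate. For the exponent claim: if $\varphi(t)=\bar c t^{1-\kappa}$ with $\kappa<1/2$, then $(\varphi')^2\sim t^{-2\kappa}$ is integrable, which is a contradiction.

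Normalize $f(\bar{x})=0$ and $\bar{x}=0$. Let $L$ be a Lipschitz constant of $\nabla f$ on a closed ball $\bar{\mathbb{B}}(0,r_0)$ with $r_0<\delta_{\bar{x}}$. Define the definable function
\[
u(s):=\max_{\|x\|\le s} f(x),\qquad s\in[0,r_0].
\]
Since $\bar{x}$ is not a local maximizer, $u(s)>0$ for $s>0$. Since $\nabla f(0)=0$, the descent lemma gives $0<u(s)\le Ls^2/2$. Moreover, $u$ is continuous and non-decreasing with $u(0)=0$. Shrinking $r_0$ so that $u(r_0)<\eta_{\bar{x}}$, every $t\in(0,u(r_0)]$ equals $u(s)$ for some $s$. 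Since definable one-variable functions are piecewise $C^1$ and monotone near $0$, $u$ is $C^1$ and strictly increasing on some $(0,s_0)$. Hence $s\mapsto u(s)$ is a bijection onto $(0,u(s_0))$.

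Let $x_s$ be a maximizer defining $u(s)$. I would first rule out $\|x_s\|<s$. In that case $x_s$ would be a critical point with $0<f(x_s)<\eta_{\bar{x}}$ inside $\mathbb{B}(\bar{x},\delta_{\bar{x}})$, which contradicts~\eqref{eq:local-KL}. Thus $\|x_s\|=s$, and the KKT condition gives $\nabla f(x_s)=\lambda_s x_s$ with $\lambda_s\ge 0$. An envelope/Danskin argument for the max over the ball (valid at differentiability points, hence on $(0,s_0)$) then gives
\[
u'(s)=\lambda_s s=\|\nabla f(x_s)\|.
\]
Consequently, it suffices to show that $u'(s)^2\le C\,u(s)$ for small $s$.

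This last inequality is the step I expect to be the main obstacle, since a priori $u$ might be extremely flat (e.g.\ $e^{-1/s}$ in a non-polynomially bounded structure). My plan is to use that definable one-variable germs at $0^+$ form a Hardy field. Hence the definable function $q(s):=u'(s)^2/u(s)$ has a limit in $[0,+\infty]$ as $s\downarrow0$. Note that
\[
(\sqrt{u})'(s)=\tfrac12\sqrt{q(s)}.
\]
Suppose $q(s)\to+\infty$. Then $(\sqrt u)'\ge M$ on some $(0,s_M)$ for every $M$, so $\sqrt{u(s)}\ge Ms$ there, because $\sqrt{u(0)}=0$ and $\sqrt{u}$ is continuous. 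This contradicts $\sqrt{u(s)}\le\sqrt{L/2}\,s$. Therefore $q$ is bounded near $0$, say $q\le C$ on $(0,s_1)$.

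To conclude, take $t\in(0,u(s_1))$ and $s$ with $u(s)=t$. Then $x_s$ satisfies $f(x_s)=t$, $\|x_s\|<\delta_{\bar{x}}$, and $\|\nabla f(x_s)\|^2\le Ct$, so $\phi_{\bar{x}}(t)\le Ct$. The domain of $\varphi$ is shrunk to $(0,u(s_1))$, which is the ``after possibly shrinking'' clause in the statement.
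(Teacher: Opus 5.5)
Your proposal is correct and follows essentially the same route as the paper. Both arguments use the value function $u(s)=\max_{\|x\|\le s}f(x)$ (the paper's $M$), a KKT argument that puts the maximizer on the sphere with $u'(s)=\|\nabla f(x_s)\|$, the descent-lemma bound $u(s)\le Ls^2/2$, an o-minimal bound on $(\sqrt{u})'$, and finally the KL inequality at $x_s$. The differences are only technical: you get $u'(s)=\|\nabla f(x_s)\|$ from an envelope argument that works for any maximizer, so you need no definable $C^1$ selection $z(t)$, and you bound $(\sqrt{u})'$ through the existence of its limit at $0^+$ rather than the paper's doubling estimate with the explicit constant $8L$.
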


\begin{proof}
Without loss of generality, we assume $\bar{x}=0$ and $f(0)=0$. Let $\eta, r>0$ be such that
\begin{equation}\label{eq4-1-gen}
    \varphi'(f(x))\|\nabla f(x)\|\geq1
    \qquad
    \forall x\in\mathbb B(0,r)
    \text{ with }0<f(x)<\eta.
\end{equation}
For each $t\in[0,r)$, define
\[
    M(t):=\max_{\|x\|\le t} f(x),
    \qquad
    H(t):=\argmax\{f(x):\|x\|\le t\}.
\]
Because \(0\) is not a local maximizer, \(M(t)>0\) for every sufficiently small \(t>0\), while continuity of \(f\) gives \(M(t)\to0\) as \(t\downarrow0\). Meanwhile, the function $M$ is definable. With
\[
    W(x,t):=-f(x)+\iota_{\{\|x\|\le t\}}(x),
    \qquad
    Q(t):=\inf_{x\in\R^d}W(x,t),
\]
we have $M(t)=-Q(t)$, and the definability follows from the standard definability of marginal functions; see, e.g., \cite[Remark 2 and Lemma 1]{Kurdyka1998OnGO}. Moreover, $\operatorname{gph}(H)=(W-Q)^{-1}(0)$ is definable by \cite[Chapter 1, Lemma 2.3(iii)]{van1998tame}. Hence, by definable selection~\cite[Theorem~4.5]{Dries1996GeometricCA}, we may select a definable curve \(z:(0,r)\to\R^d\) such that \(z(t)\in H(t)\) for every \(t\in(0,r)\). Applying the monotonicity lemma~\cite[Theorem~4.1]{Dries1996GeometricCA} to the finitely many coordinate functions of \(z\), and taking a common subinterval, we may assume that \(z\in C^1((0,r))\). After reducing \(r>0\) further if necessary, the same lemma ensures that \(M\) is \(C^1\) and strictly increasing, with \(M(t)<\eta\) on \((0,r)\).

We first claim that $\|z(t)\|=t$ and $\nabla f(z(t))\neq 0$ for all $t\in(0,r)$.
Since $M(t)\in (0,\eta)$ for $t\in (0,r)$, \eqref{eq4-1-gen} gives $\nabla f(z(t))\neq 0$. If $z(t)$ belonged to the interior of $\overline{\mathbb B}(0,t)$, then the first-order necessary condition for maximizing $f$ over $\overline{\mathbb B}(0,t)$ would give $\nabla f(z(t))=0$, a contradiction. Thus $\|z(t)\|=t$. By the optimality condition for maximizing $f$ over $\overline{\mathbb B}(0,t)$, there exists $\lambda(t)>0$ such that $\nabla f(z(t))=\lambda(t)z(t)$,
which implies that $\frac{\nabla f(z(t))}{\|\nabla f(z(t))\|} = \frac{z(t)}{\|z(t)\|}$. Since $\|z(t)\|=t$, differentiating $\|z(t)\|^2=t^2$ gives
\[
    \left\langle \frac{z(t)}{\|z(t)\|},z'(t)\right\rangle=1.
\]
Therefore,
\begin{equation}\label{eq:Mprime-gradient}
    M'(t)=\langle \nabla f(z(t)),z'(t)\rangle = \|\nabla f(z(t))\|,
    \qquad \forall t\in(0,r).
\end{equation}
Next, since $\nabla f$ is locally Lipschitz continuous and $\nabla f(0)=0$, after reducing $r$ if necessary, there exists $L>0$ such that $\nabla f$ is $L$-Lipschitz continuous on $\mathbb B(0,r)$. Hence, by the descent lemma,
\[
    M(t)=f(z(t))
    \le
    f(0)+\frac{L}{2}\|z(t)\|^2
    =
    \frac{L}{2}t^2.
\]
Define $u(t):=\sqrt{M(t)}$. Then $u$ is positive, definable, and $C^1$ on $(0,r)$, satisfying $0<u(t)\le \sqrt{L/2}\,t$.
Since \(u\) is strictly increasing on \((0,r)\), we have \(u'\geq0\) on \((0,r)\). By the monotonicity lemma, after reducing \(r>0\) if necessary, we may assume that \(u'\) is monotone and that \(2t\) remains in its original domain for every \(t\in(0,r)\).

Suppose first that \(u'\) is non-decreasing. Then
\[
    u(2t)-u(t)
    =
    \int_t^{2t}u'(s)\,ds
    \geq t u'(t).
\]
Using \(u(2t)\leq 2\sqrt{L/2}\,t\), we obtain $u'(t)\leq \sqrt{2L}$. If \(u'\) is non-increasing, then
\[
    u(t)-u(t/2)
    =
    \int_{t/2}^{t}u'(s)\,ds
    \geq\frac{t}{2}u'(t),
\]
and \(u(t)\leq\sqrt{L/2}\,t\) again gives $u'(t)\leq \sqrt{2L}$. Therefore, in either case, $0\leq u'(t)\leq\sqrt{2L}$ for any $t\in(0,r)$.
Consequently, $M'(t)=2u(t)u'(t)\le 2\sqrt{2L} \sqrt{M(t)}$.
Combining this with \eqref{eq:Mprime-gradient}, for any $t\in (0,r)$, we have
\begin{equation}\label{eq:gradient-square-value}
    \|\nabla f(z(t))\|^2
    =
    (M'(t))^2
    \le
    8L M(t) = 
    8L f(z(t)).
\end{equation}

Since \(M\) is continuous and strictly increasing with \(M(t)\to0\), for every sufficiently small \(s>0\), there exists \(t_s>0\) such that $M(t_s)=f(z(t_s))=s$.
Applying \eqref{eq4-1-gen} at \(z(t_s)\) and then using
\eqref{eq:gradient-square-value}, we obtain
\[
    \frac{1}{(\varphi'(s))^2}
    \leq
    \|\nabla f(z(t_s))\|^2
    \leq 8Ls.
\]
Therefore, for sufficiently small $s>0$,
\[
    \phi_{\bar{x}}(s)
    :=
    \frac{1}{(\varphi'(s))^2}
    \le 8Ls.
\]
Consequently, $1/\phi_{\bar{x}} = (\varphi'(s))^2\geq\frac{1}{8Ls}$ is not integrable near \(0\). Finally, if
\(\varphi(s)=\bar c\,s^{1-\kappa}\), then
\[
    \frac{1}{(\varphi'(s))^2}
    =
    \frac{s^{2\kappa}}
         {\bar c^2(1-\kappa)^2}.
\]
The estimate \(1/(\varphi'(s))^2=O(s)\) requires \(2\kappa\geq1\), and hence
\(\kappa\geq1/2\).
\end{proof}

Some remarks are in order.
First, although both constructions use definable selection, our curve $z(\cdot)$ is different from the classical talweg, see, e.g., \cite{bolte2008characterizations}. A talweg selects the points where the gradient norm is approximately minimal. In contrast, our curve traces the maximal growth of the function value with respect to the distance from \(\bar x\). Thus, the two constructions are based on distinct—and, in a sense, opposite—variational principles. Second, it is proved in \cite[Theorem~7]{bento2025convergence} that the sum of a globally Lipschitz smooth function and a convex function cannot have a KL exponent $\kappa\in (0,\frac{1}{2})$ at a local minimizer in the interior of its domain. Our Theorem~\ref{Th_KL_general} does not require such a sum structure but considers a general locally Lipschitz smooth function. It is advantageous in two senses: it covers general desingularization functions and does not require the target point to be a local minimizer. The former advantage allows us to work with general definable functions but not just semi-algebraic or subanalytic functions; while the latter accommodates nonconvex functions, where limit points are not guaranteed to be local minimizers.

\begin{theorem}[Global rates for convex definable objectives]
\label{thm:matrix-sketched-GD-global}
    Consider the GRSD method~\eqref{eq:matrix-sketched-GD} with stepsizes $\alpha_k$ satisfying Assumption~\ref{assmp:step-size}. Suppose that $f$ is coercive, convex, definable, and has a locally Lipschitz continuous gradient, and that conditions~\eqref{eq:normalized-matrix-directional-smoothness},~\eqref{eq:matrix-sketched-stepsize} and~\eqref{eq:matrix-sketch-assumption} hold. Let $\varphi$ be the global KL desingularization function given by
    Proposition~\ref{prop:global-KL-convex-definable}, and define $\phi(0):=0$,
    \[
        \phi(t):=\frac{1}{(\varphi'(t))^2}
        \quad\text{and}\quad
        \psi(t):=\int_t^{t_0}(\varphi'(s))^2\,\mathrm ds\qquad\forall t>0.
    \]
    Then $1/\sqrt{\phi}$ is integrable near zero, whereas $1/\phi$ is not
    integrable near zero. Consequently, all conclusions of
    Theorem~\ref{th_general_exp_rate1} hold
    with functions $\phi_{\B}=\phi$, $\Psi_{\B}=\psi$, $\Phi_{\B}=\varphi$, and constants $\cona=(1-\frac{\bar\alpha}{2})/M$, $\conb=\nu$, $\conc=M^2$.
\end{theorem}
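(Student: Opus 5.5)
The plan is to verify the hypotheses of Theorem~\ref{th_general_exp_rate1} one by one and then invoke it. First, Proposition~\ref{prop:matrix-sketched-GD} shows that GRSD satisfies Assumption~\ref{assmp:basic} on $\B=\{x:f(x)\le f(x^0)\}$ with $h=f$, $g=\|\nabla f\|^2$, and constants $\cona=(1-\frac{\bar\alpha}{2})/M$, $\conb=\nu$, $\conc=M^2$. The set $\B$ is closed and, since $f$ is coercive, also compact. Moreover $h^*=\inf_\B f=f^*=\min f>-\infty$. Assumption~\ref{assmp:step-size} is given by hypothesis.

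Next comes the GUEB. Proposition~\ref{prop:global-KL-convex-definable} provides a global desingularization function $\varphi$, and Proposition~\ref{prop:KL-to-UEB}\ref{prop:KL-to-UEB-global} turns it into Assumption~\ref{assmp:Global-EB} with modulus $\phi=1/(\varphi')^2$ and $\tau=+\infty$. This choice of $\tau$ makes item~\ref{ass_g_E_3} trivial.

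Two properties of $\phi$ need a short check. It is nondecreasing because $\varphi$ is concave, so $\varphi'$ is nonincreasing. It is strictly positive on $(0,\infty)$ because $\varphi'>0$ there. For $x\in\B\setminus X^*$ we have $f(x)-f^*>0$, and \eqref{eq:global-KL-convex-definable} gives $\phi(f(x)-f^*)\le\|\nabla f(x)\|^2$. For $x\in X^*$ both sides vanish.

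For the two integrability claims:
\begin{itemize}
\item $1/\sqrt{\phi}=\varphi'$ is integrable on $[0,t]$, with integral $\varphi(t)-\varphi(0)=\varphi(t)<\infty$, using continuity of $\varphi$ at $0$ and monotone convergence. So $\Phi_\B=\varphi$ exactly.
\item $\Psi_\B=\psi$ by Definition~\ref{def:inverse-smoothing}.
\item Non-integrability of $1/\phi=(\varphi')^2$ comes from Theorem~\ref{Th_KL_general}.
\end{itemize}

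Applying Theorem~\ref{Th_KL_general} is the main obstacle. It needs a critical point $\bar x$ that is not a local maximizer, and it needs the local KL function at $\bar x$ to be the global $\varphi$. I would take any $\bar x\in X^*$, which exists by coercivity. If $f$ is constant near $\bar x$, then by convexity $\bar x$ is a global minimizer in a degenerate situation. If $f$ is constant on all of $\mathbb R^d$, coercivity fails. So I would argue as follows. If every point of $X^*$ were a local maximizer, then $X^*$ would be open. It is also closed and nonempty, so $X^*=\mathbb R^d$, contradicting coercivity. Hence some minimizer $\bar x$ lies on the boundary of $X^*$ and is not a local maximizer.

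Restricting the global inequality \eqref{eq:global-KL-convex-definable} to a ball around $\bar x$ shows that $\varphi$ is a local KL desingularization function at $\bar x$, since $f(\bar x)=f^*$. Theorem~\ref{Th_KL_general} then yields $\phi(t)\le Ct$ for small $t$, so $1/\phi$ is not integrable near $0$. The remaining hypotheses of Theorem~\ref{Th_KL_general}, namely definability and a locally Lipschitz gradient, are assumed. With all hypotheses verified, every conclusion of Theorem~\ref{th_general_exp_rate1} transfers with the stated identifications.
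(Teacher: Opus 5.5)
Your proposal is correct and follows the paper's proof essentially step for step. You verify Assumption~\ref{assmp:basic} via Proposition~\ref{prop:matrix-sketched-GD}, get the GUEB from Propositions~\ref{prop:global-KL-convex-definable} and~\ref{prop:KL-to-UEB}, read off $\Phi_{\B}=\varphi$ from $1/\sqrt{\phi}=\varphi'$, and obtain non-integrability of $(\varphi')^2$ by applying Theorem~\ref{Th_KL_general} at a minimizer that is not a local maximizer. The only difference is cosmetic: you find that minimizer by an open--closed (connectedness) argument, whereas the paper directly picks a boundary point of the nonempty compact set $X^*$, which rests on the same fact.
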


\begin{proof}
    Coercivity and descent imply that the initial level set $\B$ is compact.
    Propositions~\ref{prop:global-KL-convex-definable}
    and~\ref{prop:KL-to-UEB}\ref{prop:KL-to-UEB-global} yield the required
    GUEB with modulus $\phi=1/(\varphi')^2$. Since $1/\sqrt{\phi}=\varphi'$, the function $1/\sqrt{\phi}$ is integrable near zero and $\Phi_{\B}=\varphi$. To establish the opposite conclusion for
    $1/\phi$, choose $\bar{x}\in\operatorname{bdry}(\argmin f)$. Such a point exists because $\argmin f$ is nonempty and compact.
    Moreover, $\bar{x}$ is not a local maximizer: otherwise, since
    $f(\bar{x})=f^*$, a neighborhood of $\bar{x}$ would be contained in
    $\argmin f$, contradicting
    $\bar{x}\in\operatorname{bdry}(\argmin f)$. The global function
    $\varphi$ is also a local KL desingularization function at $\bar{x}$.
    Hence, Theorem~\ref{Th_KL_general} gives that ${1}/{\phi}=(\varphi')^2$ is not integrable near $0$, and therefore
    $\Psi_{\B}=\psi$. The result now follows directly from
    Theorem~\ref{th_general_exp_rate1}.
\end{proof}

\begin{theorem}[Local convergence and rates for definable objectives]
\label{thm:matrix-sketched-GD-local}
    Consider the GRSD method~\eqref{eq:matrix-sketched-GD} with stepsizes $\alpha_k$ satisfying Assumption~\ref{assmp:step-size}. Suppose that \(f\) is coercive, definable and has a locally Lipschitz continuous gradient, and that conditions~\eqref{eq:normalized-matrix-directional-smoothness},
    \eqref{eq:matrix-sketched-stepsize}, and
    \eqref{eq:matrix-sketch-assumption} hold. For each critical point \(\bar{x}\), let
    \(\varphi_{\bar{x}}:[0,\eta_{\bar{x}})\to\mathbb R_+\) be a local KL
    desingularization function given by
    Proposition~\ref{prop:local-KL-definable}. Fix
    \(t_{0,\bar{x}}\in(0,\eta_{\bar{x}})\), and define $\phi_{\bar{x}}(0):=0$,
    \[
        \phi_{\bar{x}}(t)
        :=
        \frac{1}{(\varphi_{\bar{x}}'(t))^2}
        \quad\text{and}\quad
        \psi_{\bar{x}}(t)
        :=
        \int_t^{t_{0,\bar{x}}}
        (\varphi_{\bar{x}}'(s))^2\,\mathrm ds\quad\forall t>0.
    \]
    Then \(1/\sqrt{\phi_{\bar{x}}}\) is integrable near zero for every
    critical point \(\bar{x}\). Moreover, if \(\bar{x}\) is not a local
    maximizer, then \(1/\phi_{\bar{x}}\) is not integrable near zero. Consequently, all conclusions of Corollary~\ref{cor:localEB} hold with functions $\Psi_{\bar{x}}=\psi_{\bar{x}}$, $\Phi_{\bar{x}}=\varphi_{\bar{x}}$, and constants $\cona=\frac{1-\frac{\bar\alpha}{2}}{M}$, $\conb=\nu$, $\conc=M^2$. In particular, the iterates converge almost surely to a critical point \(x^*\). On every such convergent path,\footnote{The dependence on the sample path $\omega\in \Omega$ is suppressed in case of no potential confusion.} either \(x^*\) is a local maximizer and the sequence terminates at \(x^*\) after finitely many iterations, or \(x^*\) is not a local maximizer and the rate estimates in Corollary~\ref{cor:localEB}\ref{cor:localEB-ii} hold.
\end{theorem}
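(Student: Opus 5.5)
The plan is to reduce the statement to Corollary~\ref{cor:localEB}, following the pattern of the proof of Theorem~\ref{thm:matrix-sketched-GD-global}.

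\emph{Step 1: the MRA structure.} By Proposition~\ref{prop:matrix-sketched-GD}, conditions \eqref{eq:normalized-matrix-directional-smoothness}, \eqref{eq:matrix-sketched-stepsize} and \eqref{eq:matrix-sketch-assumption} yield Assumption~\ref{assmp:basic} on $\B=\{f\le f(x^0)\}$. The constants are $\cona=(1-\bar\alpha/2)/M$, $\conb=\nu$ and $\conc=M^2$, and $h=f$, $g=\|\nabla f\|^2$. Coercivity makes $\B$ compact, and $f$ is continuous, so $h^*>-\infty$. Since $g$ is continuous on the compact set $\B$, it is uniformly continuous there. Taking $\Gamma=\mathrm{id}$, the uniform-continuity hypothesis of Corollary~\ref{cor:localEB} holds. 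The target set $F=g^{-1}(0)\cap\B$ is exactly the set of critical points in $\B$.

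\emph{Step 2: the local UEB and integrability.} By Proposition~\ref{prop:local-KL-definable}, $f$ has the KL property at every critical point $\bar x$. Proposition~\ref{prop:KL-to-UEB}\ref{prop:KL-to-UEB-local} then gives Assumption~\ref{assmp:local-EB} with modulus $\phi_{\bar x}=1/(\varphi_{\bar x}')^2$. This modulus is non-decreasing and positive on $(0,\eta_{\bar x})$ because $\varphi_{\bar x}$ is concave with $\varphi_{\bar x}'>0$.

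Next, $1/\sqrt{\phi_{\bar x}}=\varphi_{\bar x}'$. Its integral over $[0,t]$ equals $\varphi_{\bar x}(t)-\varphi_{\bar x}(0)=\varphi_{\bar x}(t)<\infty$, using continuity of $\varphi_{\bar x}$ at $0$. Hence $1/\sqrt{\phi_{\bar x}}$ is integrable near zero, and $\Phi_{\bar x}=\varphi_{\bar x}$. The definition of $\psi_{\bar x}$ coincides with $\Psi_{\bar x}$ from Definition~\ref{def:inverse-smoothing}.

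If $\bar x$ is not a local maximizer, Theorem~\ref{Th_KL_general} applies, since $f$ is definable with locally Lipschitz gradient. It shows that $1/\phi_{\bar x}=(\varphi_{\bar x}')^2$ is not integrable near $0$. One caveat is that Theorem~\ref{Th_KL_general} permits shrinking the domain of $\varphi$. Non-integrability is a property near $0$, so this is harmless.

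\emph{Step 3: applying Corollary~\ref{cor:localEB} and interpreting the dichotomy.} Corollary~\ref{cor:localEB} gives $x^k\to x^*\in F$ almost surely, so $x^*$ is a critical point. On $E_0$ there are two cases.
\begin{itemize}
\item If $1/\phi_{x^*}$ is integrable near $0$, the path terminates finitely. By the contrapositive of Step~2, this can only happen when $x^*$ is a local maximizer.
\item If $x^*$ is not a local maximizer, $1/\phi_{x^*}$ is non-integrable, and Corollary~\ref{cor:localEB}\ref{cor:localEB-ii} gives the stated rates with $\Psi_{x^*}=\psi_{x^*}$ and $\Phi_{x^*}=\varphi_{x^*}$.
\end{itemize}
The remaining case is a local maximizer $x^*$ with non-integrable $1/\phi_{x^*}$. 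Then Corollary~\ref{cor:localEB}\ref{cor:localEB-ii} still supplies the rate bounds, but it does not by itself give finite termination.

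The main obstacle is therefore to show that convergence to a local maximizer forces finite termination. My approach uses monotonicity. Since $f(x^k)$ is non-increasing and $f(x^k)\to f(x^*)$, we have $f(x^k)\ge f(x^*)$ for all $k$. Once $x^k$ lies in a neighborhood where $x^*$ is a local maximizer, $f(x^k)\le f(x^*)$. Together these give $f(x^k)=f(x^*)=h^\infty$ eventually. Then \eqref{eq:basic_1} forces $x^{k+1}=x^k$, so the sequence terminates at $x^*$. This argument sidesteps integrability entirely and completes the dichotomy.
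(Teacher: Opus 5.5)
Your proposal is correct and follows essentially the same route as the paper. Both arguments verify Assumption~\ref{assmp:basic} via Proposition~\ref{prop:matrix-sketched-GD}, obtain the LUEB and the integrability of $\varphi_{\bar x}'$ from Propositions~\ref{prop:local-KL-definable} and~\ref{prop:KL-to-UEB}, invoke Corollary~\ref{cor:localEB}, and use Theorem~\ref{Th_KL_general} at non-maximizers. For local maximizers, both use the same monotonicity plus sufficient-decrease argument, which forces $f(x^k)=f(x^*)$ and hence $x^{k+1}=x^k$ eventually. Your explicit check that $g=\|\nabla f\|^2$ is uniformly continuous on the compact level set, so that Corollary~\ref{cor:localEB} applies with $\Gamma=\mathrm{id}$, fills in a detail the paper leaves implicit.
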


\begin{proof}
    Coercivity makes the initial level set $\B$ compact, while
    \eqref{eq:matrix-sketch-descent} ensures that all iterates remain in
    $\B$. By
    \eqref{eq:normalized-matrix-directional-smoothness},
    \eqref{eq:matrix-sketched-stepsize}, and
    \eqref{eq:matrix-sketch-assumption},
    Proposition~\ref{prop:matrix-sketched-GD} verifies
    Assumption~\ref{assmp:basic} with the stated constants.
    Propositions~\ref{prop:local-KL-definable}
    and~\ref{prop:KL-to-UEB}\ref{prop:KL-to-UEB-local} then yield the
    required LUEB at every critical point. Note that $1/\sqrt{\phi_{\bar{x}}}=\varphi_{\bar{x}}'$ is integrable near zero and, by
    Definition~\ref{def:inverse-smoothing}, $\Phi_{\bar{x}}=\varphi_{\bar{x}}$ and $\Psi_{\bar{x}}=\psi_{\bar{x}}$.
    Corollary~\ref{cor:localEB} therefore applies and yields almost-sure
    convergence to a critical point $x^*$. Fix a sample path on which $x^k\to x^*$. If $x^*$ is not a local
    maximizer, Theorem~\ref{Th_KL_general} shows that ${1}/{\phi_{x^*}}=(\varphi_{x^*}')^2$ is not integrable near zero. Thus,
    Corollary~\ref{cor:localEB}\ref{cor:localEB-ii} gives the stated rates.
    If $x^*$ is a local maximizer, then, for all sufficiently large $k$,
    local maximality and $f(x^k)\downarrow f(x^*)$ imply
    $f(x^k)=f(x^*)$. The sufficient-descent inequality
    \eqref{eq:basic_1} then forces the sequence to be constant, and hence
    equal to $x^*$, from some finite index onward.
\end{proof}

\begin{example}[Rates under a power-type KL inequality]
\label{exam:matrix-sketched-GD-subanalytic}
    Suppose that the conditions of
    Theorem~\ref{thm:matrix-sketched-GD-local} hold and that \(f\) is
    subanalytic. Fix a sample path on which \(x^k\to x^*\). If \(x^*\) is a
    local maximizer, the sequence terminates at \(x^*\) after finitely many
    iterations. Otherwise, \(f\) has a KL exponent
    \(\kappa\in[1/2,1)\) at \(x^*\), with the desingularization function $\varphi_{x^*}(t)=\bar c\,t^{1-\kappa}$ for some \(\bar c>0\). Setting \(r:=\bar c(1-\kappa)\), one may take $\phi_{x^*}(t)=r^{-2}t^{2\kappa}$, $\Phi_{x^*}(t)=\bar c\,t^{1-\kappa}$,
    and
    \[
        \Psi_{x^*}(t)
        =
        \begin{cases}
            r^2\log(t_0/t),
            & \kappa=\frac12,\\[1mm]
            \displaystyle
            \frac{r^2}{2\kappa-1}
            \left(
                t^{-(2\kappa-1)}
                -
                t_0^{-(2\kappa-1)}
            \right),
            & \kappa\in(\frac12,1).
        \end{cases}
    \]
    Consequently, if \(\kappa=\frac12\), then
    \[
        f(x^k)-f(x^*)
        =
        O\!\left(\exp(-\Theta(A_k))\right)
        \quad\text{and}\quad
        \|x^k-x^*\|
        =
        O\!\left(\exp(-\Theta(A_k))\right).
    \]
    If \(\kappa\in(\frac12,1)\), then
    \[
        f(x^k)-f(x^*)
        =
        O\!\left(A_k^{-\frac{1}{2\kappa-1}}\right)\quad\text{and}\quad
        \|x^k-x^*\|
        =
        O\!\left(
            A_k^{-\frac{1-\kappa}{2\kappa-1}}
        \right).
    \]
    These estimates hold pathwise for all sufficiently large \(k\), and
    hence almost surely. These asymptotic rates coincide with those established for deterministic KL descent methods; see, for example, \cite[Theorems~4]{FrankelGarrigosPeypouquet2015Splitting} and \cite[Theorem 2]{bento2025convergence}.
\end{example}

}

\subsection{\revise{Randomized block proximal-gradient descent}}
\label{section:smooth-random-bpg}
\revise{
Consider the smooth composite optimization problem
\begin{equation}\label{eq:smooth-composite-min}
    \min_{x\in\mathbb R^d}
    H(x):=F(x)+ {R(x)},
\end{equation}
where $x=(x_1,\dots,x_m)\in\mathbb R^{d_1}\times\cdots\times\mathbb R^{d_m}$, \(F\in C^1(\mathbb R^d)\) has an accessible gradient, and
{\(R\in C^1(\mathbb R^d)\) is a possibly nonconvex and nonseparable regularizer. For each
\(i\in[m]\), let \(I_i:\mathbb R^{d_i}\to\mathbb R^d\) denote the canonical embedding into the \(i\)th block. For any
\(G\in C^1(\mathbb R^d)\), we denote its restricted gradient along the
\(i\)th block by $\nabla_iG(x):=I_i^\top\nabla G(x)\in\mathbb R^{d_i}$.} Given \(x\in\mathbb R^d\), a block \(i\in[m]\), and a stepsize
\(\gamma>0\), define
\[
    \widehat T_i(x;\gamma)
    :=
    \bigl(
        x_1,\dots,x_{i-1},
        \widehat T_i^b(x;\gamma),
        x_{i+1},\dots,x_m
    \bigr),
\]
where
\[
    \widehat T_i^b(x;\gamma)
    =
    \argmin_{u_i\in\mathbb R^{d_i}}
    \left\{
        {R(x+I_i(u_i-x_i))}
        +
        \langle\nabla_iF(x),u_i-x_i\rangle
        +
        \frac{1}{2\gamma}\|u_i-x_i\|^2
    \right\}.
\]
A randomized block proximal-gradient method takes the form
\begin{equation}\label{eq:smooth-random-bpg-raw}
    x^{k+1}
    =
    \widehat T_{i_k}(x^k;\gamma_k) 
    \qquad \forall k\geq0,
\end{equation}
where \(i_k\in[m]\) is the block sampled at iteration \(k\), and
\(\gamma_k>0\) is the corresponding blockwise stepsize. 
{Under the conditions introduced below}, the subproblem is strongly convex, and hence
\(\widehat T_i^b(x;\gamma)\) is unique. Moreover, the
nonexpansiveness of the proximal map and the continuity of \(\nabla F\)
imply that \(\widehat T_i(\cdot;\gamma)\) is continuous. When \(m=1\),
\eqref{eq:smooth-random-bpg-raw} reduces to proximal gradient descent
with a smooth proximal term and a varying stepsize.

We now introduce the assumptions and notation used in the
analysis. Let $\B
    :=
    \{x\in\mathbb R^d:H(x)\leq H(x^0)\}$,
and suppose that \(H\) is bounded from below on \(\B\). 

{\begin{assumption}[Blockwise regularity]
\label{assmp:smooth-bpg-smoothness}
For each \(i\in[m]\), there exist constants
\(L_i^F,L_i^R>0\) and \(\varrho_i\geq0\) such that the following hold.
\begin{enumerate}[label=\textup{\textrm{(\roman*)}}]
    \item \label{block-smooth-F} The function \(F\) is blockwise upper \(L_i^F\)-smooth:
    \begin{equation}\label{eq:block-smooth-F}
        F(x+I_i u_i)
        \leq
        F(x)
        +
        \langle\nabla_iF(x),u_i\rangle
        +
        \frac{L_i^F}{2}\|u_i\|^2
        \qquad
        \forall x\in\mathbb R^d,\ 
        u_i\in\mathbb R^{d_i}.
    \end{equation}

    \item The function \(R\) is blockwise \(\varrho_i\)-weakly convex; \ie,
    for every fixed \((x_j)_{j\neq i}\), the function
    \[
        u_i
        \longmapsto
        R(x_1,\ldots,x_{i-1},u_i,x_{i+1},\ldots,x_m)
        +
        \frac{\varrho_i}{2}\|u_i\|^2
    \]
    is convex.

    \item The restricted gradient \(\nabla_iR\) is
    \(L_i^R\)-Lipschitz continuous along the \(i\)th block on \(\B\):
    \[
        \|\nabla_iR(x+I_i u_i)-\nabla_iR(x)\|
        \leq
        L_i^R\|u_i\|\quad \forall x,x+I_i u_i\in\B.
    \]
\end{enumerate}
\end{assumption}}


{The upper smoothness condition (Assumption~\ref{assmp:smooth-bpg-smoothness}\ref{block-smooth-F}) is imposed globally for simplicity and can be replaced by local upper smoothness under coercivity using a refined argument.}

For the analysis, we define $\widehat L_i:=L_i^F+\varrho_i$ for $i\in [m]$. We also parameterize the blockwise stepsizes as $\gamma_k=\alpha_k / {\widehat L_i}$,
where \(\{\alpha_k\}_{k\geq0}\) is a deterministic sequence. Accordingly,
for \(\alpha>0\), define
\[
    T_i(x;\alpha)
    :=
    \widehat T_i\left(x;\frac{\alpha}{{\widehat L_i}}\right).
\]
Its updated block is given by
\begin{equation}\label{eq:smooth-bpg-map}
    T_i^b(x;\alpha)
    =
    \argmin_{u_i\in\mathbb R^{d_i}}
    \left\{
        {R(x+I_i(u_i-x_i))}
        +
        \langle\nabla_iF(x),u_i-x_i\rangle
        +
        \frac{{\widehat L_i}}{2\alpha}\|u_i-x_i\|^2
    \right\}.
\end{equation}
Thus, \(1/{\widehat L_i}\) is the block-dependent baseline stepsize, while
\(\alpha\) provides a common scaling across all blocks. With this
parameterization, the method becomes
\begin{equation}\label{eq:smooth-random-bpg}
    x^{k+1}
    =
    T_{i_k}(x^k;\alpha_k)
    \qquad \forall k\geq0.
\end{equation}
We assume that 
\begin{equation}\label{eq:smooth-bpg-stepsize}
    {
    0<\alpha_k\leq\bar\alpha
    <
    \min\left\{
        2,\,
        \min_{i\in[m]}
        \frac{\widehat L_i}{\varrho_i}
    \right\}
    \qquad
    \forall k\geq0,
    }
\end{equation}
{where \(\widehat L_i/\varrho_i:=+\infty\) whenever
\(\varrho_i=0\),} and that the sampling rule satisfies
\begin{equation}\label{eq:smooth-bpg-sampling}
    \mathbb P(i_k=i\mid\cF_k)\geq\rho>0
    \qquad
    \forall i\in[m],\ k\geq0.
\end{equation}
Finally, to place the method within our abstract framework, we define 
\begin{equation}\label{eq:smooth-bpg-constants}
\begin{aligned}
    \cona
    :=
    \left(1-\frac{\bar\alpha}{2}\right)
    \min_{i\in[m]}{\widehat L_i},\quad
    \conb
    :=
    \rho
    \min_{i\in[m]}
    \frac{1}{({\widehat L_i}+\bar\alpha L_i^R)^2},\quad
    \conc
    :=
    \max_{i\in[m]}
    \frac{1}{({\widehat L_i-\bar{\alpha}\varrho_i})^2}.
\end{aligned}
\end{equation}

\begin{proposition}\label{prop:smooth-random-bpg-basic}
    Suppose that Assumption~\ref{assmp:smooth-bpg-smoothness} and conditions~\eqref{eq:smooth-bpg-stepsize}and
    \eqref{eq:smooth-bpg-sampling} hold. Then the iterates generated by~\eqref{eq:smooth-random-bpg} satisfy Assumption~\ref{assmp:basic} on $\B$ with $h = H $ and $g = \|\nabla H \|^2$, the stepsize sequence $\{\alpha_k\}_{k\geq0}$, and
    the constants $c_1, c_2,c_3$ given in~\eqref{eq:smooth-bpg-constants}.
\end{proposition}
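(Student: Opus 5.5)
The plan is to fix $k$, write $x=x^k$, $i=i_k$, $\alpha=\alpha_k$, and set $d:=T_i^b(x;\alpha)-x_i$, so that $x^{k+1}=x+I_i d$. Everything will follow from two facts about the block subproblem~\eqref{eq:smooth-bpg-map}.

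First, by blockwise $\varrho_i$-weak convexity of $R$, the subproblem objective $\varphi(u_i)$ is strongly convex with modulus $\widehat L_i/\alpha-\varrho_i$. This modulus is positive because \eqref{eq:smooth-bpg-stepsize} gives $\alpha<\widehat L_i/\varrho_i$, so the minimizer is unique.

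Second, the minimizer is characterized by the first-order condition
\[
\nabla_iR(x+I_id)+\nabla_iF(x)+\frac{\widehat L_i}{\alpha}d=0 .
\]
Equivalently, $\frac{\widehat L_i}{\alpha}d+\bigl(\nabla_iR(x+I_id)-\nabla_iR(x)\bigr)=-\nabla_iH(x)$.

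The steps are ordered so that $x^{k+1}\in\B$ is known before the Lipschitz property of $\nabla_iR$ on $\B$ is used.

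\emph{Sufficient decrease~\eqref{eq:basic_1}.} Compare $\varphi$ at its minimizer with $\varphi(x_i)=R(x)$ using strong convexity:
\[
R(x^{k+1})+\langle\nabla_iF(x),d\rangle\le R(x)-\Bigl(\frac{\widehat L_i}{\alpha}-\frac{\varrho_i}{2}\Bigr)\|d\|^2 .
\]
Add the blockwise upper smoothness~\eqref{eq:block-smooth-F} of $F$, namely $F(x^{k+1})\le F(x)+\langle\nabla_iF(x),d\rangle+\frac{L_i^F}{2}\|d\|^2$. Using $\widehat L_i=L_i^F+\varrho_i$, this gives
\[
H(x^{k+1})\le H(x)-\frac{\widehat L_i}{\alpha}\Bigl(1-\frac{\alpha}{2}\Bigr)\|d\|^2\le H(x)-\frac{\cona}{\alpha}\|x^{k+1}-x^k\|^2 .
\]
By induction this yields $x^k\in\B$ for all $k$, which is Assumption~\ref{assmp:basic}\ref{basic_2}. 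Moreover, $h^*>-\infty$ because $H$ is bounded below on $\B$, and $g=\|\nabla H\|^2$ is continuous.

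\emph{Upper bound~\eqref{eq:basic_4}.} Take the inner product of the first-order identity with $d$. Weak convexity gives $\langle\nabla_iR(x+I_id)-\nabla_iR(x),d\rangle\ge-\varrho_i\|d\|^2$. Hence $(\widehat L_i/\alpha-\varrho_i)\|d\|^2\le\|\nabla_iH(x)\|\,\|d\|$, that is,
\[
\|d\|\le\frac{\alpha\|\nabla H(x)\|}{\widehat L_i-\bar\alpha\varrho_i}.
\]
Squaring gives $\|x^{k+1}-x^k\|^2\le\conc\alpha^2g(x)$. I will also check the requirement $\conc\ge\conb$: it holds because $\rho\le1$ and $(\widehat L_i+\bar\alpha L_i^R)^{-2}\le\widehat L_i^{-2}\le(\widehat L_i-\bar\alpha\varrho_i)^{-2}$.

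\emph{Expected lower bound~\eqref{eq:expbound}.} Since both $x$ and $x^{k+1}$ lie in $\B$, the triangle inequality on the first-order identity together with the $L_i^R$-Lipschitz continuity of $\nabla_iR$ on $\B$ gives $\|\nabla_iH(x)\|\le(\widehat L_i/\alpha+L_i^R)\|d\|$. Therefore
\[
\|x^{k+1}-x^k\|^2\ge\frac{\alpha^2\|\nabla_{i}H(x)\|^2}{(\widehat L_{i}+\bar\alpha L_{i}^R)^2}.
\]
Now condition on $\cF_k$. The point $x^k$ is $\cF_k$-measurable, so the right-hand side is a function of $i_k$ alone with $\cF_k$-measurable coefficients. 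By~\eqref{eq:smooth-bpg-sampling},
\[
\Exp[\|x^{k+1}-x^k\|^2\mid\cF_k]\ge\sum_{i=1}^m\rho\,\frac{\alpha^2\|\nabla_iH(x^k)\|^2}{(\widehat L_i+\bar\alpha L_i^R)^2}\ge\conb\alpha^2\sum_{i=1}^m\|\nabla_iH(x^k)\|^2=\conb\alpha^2 g(x^k).
\]

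I expect the main obstacle to be bookkeeping rather than any hard estimate. Specifically, I must ensure that the Lipschitz bound for $\nabla_iR$, which is assumed only on $\B$, is invoked only after $x^{k+1}\in\B$ has been established. I must also ensure that the conditional expectation step correctly uses the facts that $i_k$ is $\cF_{k+1}$-measurable while $x^k$ is $\cF_k$-measurable.
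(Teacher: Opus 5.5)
Your proposal is correct and follows the paper's proof essentially step by step. Sufficient decrease comes from strong convexity of the block subproblem plus blockwise upper smoothness of $F$. The first-order optimality identity is then combined with weak convexity for the upper bound on the step, and with the Lipschitz continuity of $\nabla_i R$ on $\B$ (invoked only after $x^{k+1}\in\B$) for the lower bound, which you average over the sampling rule. Your explicit verification that $\conc\ge\conb$ and your conditioning through a function of $i_k$ with $\cF_k$-measurable coefficients are small details the paper leaves implicit.
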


\begin{proof}
    Fix $x\in\B$, $i\in[m]$, and $\alpha\in(0,\bar \alpha]$. Set $z_i:=T_i^b(x;\alpha)$ and $d_i:=z_i-x_i$.
    Define
    \[
        Q_{i,\alpha}(u_i)
        :=
        {R\bigl(x+I_i(u_i-x_i)\bigr)}
        +
        \langle\nabla_iF(x),u_i-x_i\rangle
        +
        \frac{{\widehat L_i}}{2\alpha}\|u_i-x_i\|^2.
    \]
    Since $Q_{i,\alpha}$ is {$(\widehat L_i/\alpha-\varrho_i)$}-strongly convex and $z_i$ is
    its minimizer,
    \[
        Q_{i,\alpha}(x_i)
        \geq
        Q_{i,\alpha}(z_i)
        +
        {\frac{1}{2}
    \left(
        \frac{\widehat L_i}{\alpha}-\varrho_i
    \right)}\|d_i\|^2.
    \]
    Expanding this inequality and combining it with
    \eqref{eq:block-smooth-F} gives
    \[
    \begin{aligned}
        H(T_i(x;\alpha))
        \leq
        H(x)
        -
        \frac{{\widehat L_i}(1-\alpha/2)}{\alpha}
        \|T_i(x;\alpha)-x\|^2
        \leq
        H(x)
        -
        \frac{\cona}{\alpha}
        \|T_i(x;\alpha)-x\|^2.
    \end{aligned}
    \]
    Therefore,~\eqref{eq:basic_1} holds with the same stepsize sequence
    $\{\alpha_k\}_{k\geq0}$. In particular,
    $H(x^{k+1})\leq H(x^k)$, and hence $x^k\in\B$ for all $k\geq0$.

    {Denote $y:=T_i(x;\alpha)=x+I_i d_i$. The first-order optimality condition for
\(z_i=T_i^b(x;\alpha)\) is
\[
    0
    =
    \nabla_iF(x)
    +
    \nabla_iR(y)
    +
    \frac{\widehat L_i}{\alpha}d_i.
\]
Since \(\nabla_iH(x)=\nabla_iF(x)+\nabla_iR(x)\), it follows that
\begin{equation}\label{eq:smooth-bpg-optimality-relation}
    \nabla_iH(x)
    =
    -\frac{\widehat L_i}{\alpha}d_i
    +
    \nabla_iR(x)-\nabla_iR(y).
\end{equation}
Because \(x,y\in\B\), the blockwise Lipschitz continuity of
\(\nabla_iR\) gives
\[
    \|\nabla_iH(x)\|
    \leq
    \left(
        \frac{\widehat L_i}{\alpha}+L_i^R
    \right)\|d_i\|.
\]
On the other hand, blockwise \(\varrho_i\)-weak convexity gives
\[
    \langle
        \nabla_iR(y)-\nabla_iR(x),
        d_i
    \rangle
    \geq
    -\varrho_i\|d_i\|^2.
\]
Taking the inner product of
\eqref{eq:smooth-bpg-optimality-relation} with \(-d_i\) yields
\[
    -\langle\nabla_iH(x),d_i\rangle
    \geq
    \left(
        \frac{\widehat L_i}{\alpha}-\varrho_i
    \right)\|d_i\|^2.
\]
Consequently,
\[
    \|d_i\|
    \leq
    \frac{\alpha}{\widehat L_i-\alpha\varrho_i}
    \|\nabla_iH(x)\|.
\]
We have therefore established
\begin{equation}\label{eq:smooth-bpg-step-comparison}
    \frac{\alpha}
    {\widehat L_i+\alpha L_i^R}
    \|\nabla_iH(x)\|
    \leq
    \|T_i(x;\alpha)-x\|
    \leq
    \frac{\alpha}
    {\widehat L_i-\alpha\varrho_i}
    \|\nabla_iH(x)\|.
\end{equation}
}
    
    Using~\eqref{eq:smooth-bpg-step-comparison} and
    \eqref{eq:smooth-bpg-sampling}, we obtain
    \[
    \begin{aligned}
        & \Exp[
            \|x^{k+1}-x^k\|^2
            \mid\cF_k
        ]
        =
        \sum_{i=1}^m
        \mathbb P(i_k=i\mid\cF_k)
        \|T_i(x^k;\alpha_k)-x^k\|^2\\
        \geq &
        \rho\alpha_k^2
        \sum_{i=1}^m
        \frac{\|\nabla_iH(x^k)\|^2}
        {({\widehat L_i}+\bar\alpha L_i^R)^2}
        \geq
        \conb\alpha_k^2
        \|\nabla H(x^k)\|^2,
    \end{aligned}
    \]
    which verifies~\eqref{eq:expbound}. Finally,
    \[
        \|x^{k+1}-x^k\|^2
        \leq
        \frac{\alpha_k^2}{({\widehat L_{i_k}-\alpha_k\varrho_{i_k}})^2}
        \|\nabla_{i_k}H(x^k)\|^2
        \leq
        \conc\alpha_k^2
        \|\nabla H(x^k)\|^2,
    \]
    which verifies~\eqref{eq:basic_4}.
\end{proof}

Since $h=H$ and $g=\|\nabla H\|^2$, the correspondence between KL inequalities and unified error bounds is
almost the same as the last application. If $\varphi$ is a global KL desingularization function of $H$,
then Assumption~\ref{assmp:Global-EB} holds with
\[
    \phi_{\B}(t)
    :=
    \frac{1}{(\varphi'(t))^2}.
\]
Likewise, if $\varphi_{\bar{x}}$ is a local KL desingularization function
at a critical point $\bar{x}$, then Assumption~\ref{assmp:local-EB} holds
with
\[
    \phi_{\bar{x}}(t)
    :=
    \frac{1}{(\varphi_{\bar{x}}'(t))^2}.
\]
The associated inverse smoothing functions are
\[
    \psi(t)
    :=
    \int_t^{t_0}(\varphi'(s))^2\,\mathrm ds,
    \qquad
    \psi_{\bar{x}}(t)
    :=
    \int_t^{t_0}
    (\varphi_{\bar{x}}'(s))^2\,\mathrm ds,
\]
and hence
\[
    \Psi_{\B}=\psi,
    \qquad
    \Phi_{\B}=\varphi,
    \qquad
    \Psi_{\bar{x}}=\psi_{\bar{x}},
    \qquad
    \Phi_{\bar{x}}=\varphi_{\bar{x}}.
\]

\begin{theorem}[Global rates for convex definable objectives]
\label{thm:smooth-random-bpg-global}
    Consider the method~\eqref{eq:smooth-random-bpg} with stepsizes $\alpha_k$ satisfying Assumption~\ref{assmp:step-size}. Suppose that $H$ is
    coercive, convex, definable and has a locally Lipschitz continuous gradient, and that
    Assumption~\ref{assmp:smooth-bpg-smoothness} and
    conditions~\eqref{eq:smooth-bpg-stepsize} and
    \eqref{eq:smooth-bpg-sampling} hold. Let $\varphi$ be the global KL
    desingularization function given by
    Proposition~\ref{prop:global-KL-convex-definable}. Fix $t_0>0$ in
    the domain of $\varphi$, and define $\phi(0):=0$,
    \[
        \phi(t):=\frac{1}{(\varphi'(t))^2}
        \quad\text{and}\quad
        \psi(t):=\int_t^{t_0}(\varphi'(s))^2\,\mathrm ds\quad\forall t> 0.
    \]
    Then $1/\sqrt{\phi}$ is integrable near zero, whereas $1/\phi$ is
    not integrable near zero. Consequently, all conclusions of 
    Theorem~\ref{th_general_exp_rate1} hold with functions $h = H $, $g = \|\nabla H \|^2$, {$\Psi_{\B}=\psi$}, {$\Phi_{\B}=\varphi$}, and constants $c_1,c_2,c_3$ given in~\eqref{eq:smooth-bpg-constants}.
\end{theorem}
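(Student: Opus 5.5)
The argument will follow the proof of Theorem~\ref{thm:matrix-sketched-GD-global} almost verbatim, with Proposition~\ref{prop:smooth-random-bpg-basic} taking the place of Proposition~\ref{prop:matrix-sketched-GD}.

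First, I will verify the setting. Coercivity of $H$ makes the level set $\B=\{x:H(x)\le H(x^0)\}$ compact, and $H$ is bounded below on it. Proposition~\ref{prop:smooth-random-bpg-basic} then gives Assumption~\ref{assmp:basic} on $\B$ with $h=H$, $g=\|\nabla H\|^2$, the stepsizes $\{\alpha_k\}$, and the constants in~\eqref{eq:smooth-bpg-constants}. In particular the iterates remain in $\B$, and $h^*=\inf_\B H=\min H$ because every minimizer lies in $\B$. Since $H$ is convex, definable and coercive, Proposition~\ref{prop:global-KL-convex-definable} supplies a global desingularization function $\varphi$. Proposition~\ref{prop:KL-to-UEB}\ref{prop:KL-to-UEB-global} then yields Assumption~\ref{assmp:Global-EB} with $\phi=1/(\varphi')^2$, taking $\tau=+\infty$ (or $\tau$ larger than $H(x^0)-h^*$), so that item~\ref{ass_g_E_3} holds on $\B$. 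Monotonicity of $\phi$ follows from concavity of $\varphi$: $\varphi'$ is non-increasing and positive, so $\phi$ is non-decreasing and positive on $(0,\infty)$, with $\phi(0)=0$. On $X^*$ the error bound holds trivially because $\phi(0)=0$.

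Second, I will handle integrability. Since $1/\sqrt{\phi}=\varphi'$ and $\int_0^t\varphi'=\varphi(t)-\varphi(0)<\infty$, the function $1/\sqrt{\phi}$ is integrable near $0$, and $\Phi_\B=\varphi$. It then follows from Definition~\ref{def:inverse-smoothing} that $\Psi_\B=\psi$.

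The main step is showing that $1/\phi$ is not integrable near $0$. I will pick $\bar x\in\operatorname{bdry}(\argmin H)$; such a point exists because $\argmin H$ is nonempty (by coercivity), compact, and a proper subset of $\R^d$. If $\argmin H=\R^d$, then $H$ is constant, which contradicts coercivity. The point $\bar x$ is a critical point, and it is not a local maximizer, since otherwise a neighborhood of $\bar x$ would lie in $\argmin H$. The global $\varphi$ also serves as a local desingularization function at $\bar x$. Because $H$ is definable with a locally Lipschitz gradient, Theorem~\ref{Th_KL_general} gives $\phi(t)=O(t)$ as $t\downarrow0$, so $(\varphi')^2$ is not integrable near $0$.

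With all hypotheses in place (Assumption~\ref{assmp:step-size} is assumed), Theorem~\ref{th_general_exp_rate1} applies directly. The only subtlety is whether the global $\varphi$ of Proposition~\ref{prop:global-KL-convex-definable} meets the $C^1$ and concavity requirements of Definition~\ref{def:local-KL} used in Theorem~\ref{Th_KL_general}; it does, by the statement of that proposition.
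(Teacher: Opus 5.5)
Your proposal is correct and follows the paper's proof essentially line by line. You verify Assumption~\ref{assmp:basic} via Proposition~\ref{prop:smooth-random-bpg-basic}, obtain the GUEB from Propositions~\ref{prop:global-KL-convex-definable} and~\ref{prop:KL-to-UEB}, read off integrability of $1/\sqrt{\phi}=\varphi'$, and apply Theorem~\ref{Th_KL_general} at a boundary point of $\argmin H$ to get non-integrability of $(\varphi')^2$ before invoking Theorem~\ref{th_general_exp_rate1}. The extra details you supply (monotonicity of $\phi$ from concavity of $\varphi$, $h^*=\min H$, the choice of $\tau$, and existence of a boundary point) are correct and only fill in steps the paper leaves implicit.
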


\begin{proof}
    Proposition~\ref{prop:smooth-random-bpg-basic} verifies
    Assumption~\ref{assmp:basic} with the choices and constants stated
    above. The remainder of the proof is identical to that of
    Theorem~\ref{thm:matrix-sketched-GD-global}, with $f$ replaced by
    $H$. In particular,
    Propositions~\ref{prop:global-KL-convex-definable}
    and~\ref{prop:KL-to-UEB}\ref{prop:KL-to-UEB-global} yield the GUEB
    with modulus $\phi=1/(\varphi')^2$, while
    Theorem~\ref{Th_KL_general}, applied at a boundary point of
    $\argmin H$, gives the non-integrability of
    $1/\phi=(\varphi')^2$. The result therefore follows from
    Theorem~\ref{th_general_exp_rate1}.
\end{proof}

\begin{theorem}[Local convergence and rates for definable objectives]
\label{thm:smooth-random-bpg-local}
    Consider method~\eqref{eq:smooth-random-bpg} with stepsizes $\alpha_k$ satisfying Assumption~\ref{assmp:step-size}. Suppose that $H$ is
    coercive, definable and has a locally Lipschitz continuous gradient, and that
    Assumption~\ref{assmp:smooth-bpg-smoothness},
    conditions~\eqref{eq:smooth-bpg-stepsize} and
    \eqref{eq:smooth-bpg-sampling} hold.
    For each critical point
    $\bar{x}$, let $\varphi_{\bar{x}}:[0,\eta_{\bar{x}})\to\mathbb R_+$ be a local KL desingularization function given by
    Proposition~\ref{prop:local-KL-definable}. Fix
    $t_{0,\bar{x}}\in(0,\eta_{\bar{x}})$, and define $\phi_{\bar{x}}(0):=0$,
    \[
        \phi_{\bar{x}}(t)
        :=
        \frac{1}{(\varphi_{\bar{x}}'(t))^2}
        \quad\text{and}\quad
        \psi_{\bar{x}}(t)
        :=
        \int_t^{t_{0,\bar{x}}}
        (\varphi_{\bar{x}}'(s))^2\,\mathrm ds \forall t> 0.
    \]
    Then $1/\sqrt{\phi_{\bar{x}}}$ is integrable near zero for every
    critical point $\bar{x}$. Moreover, if $\bar{x}$ is not a local
    maximizer, then $1/\phi_{\bar{x}}$ is not integrable near zero.
    Consequently, all conclusions of Corollary~\ref{cor:localEB} hold with functions $h = H $, $g = \|\nabla H \|^2$, $\Psi_{\bar{x}}=\psi_{\bar{x}}$, $\Phi_{\bar{x}}=\varphi_{\bar{x}}$, and constants $c_1, c_2, c_3$ given in~\eqref{eq:smooth-bpg-constants}.
    In particular, the iterates converge almost surely to a random
    critical point $x^*$ of $H$. On every such convergent path, either $x^*$ is a local maximizer and the
    sequence terminates at $x^*$ after finitely many iterations, or
    $x^*$ is not a local maximizer and the rate estimates in
    Corollary~\ref{cor:localEB}\ref{cor:localEB-ii} hold.
\end{theorem}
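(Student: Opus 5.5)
The plan is to repeat the proof of Theorem~\ref{thm:matrix-sketched-GD-local} almost verbatim, replacing the GRSD ingredients by those of the block proximal-gradient method. The only algorithm-specific input is Proposition~\ref{prop:smooth-random-bpg-basic}. Under Assumption~\ref{assmp:smooth-bpg-smoothness}, \eqref{eq:smooth-bpg-stepsize} and \eqref{eq:smooth-bpg-sampling}, it verifies Assumption~\ref{assmp:basic} on $\B=\{H\le H(x^0)\}$ with $h=H$, $g=\|\nabla H\|^2$ and the constants \eqref{eq:smooth-bpg-constants}. Coercivity of $H$ makes $\B$ compact, and Assumption~\ref{assmp:basic} already gives $x^k\in\B$. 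Since $\nabla H$ is continuous and $\B$ is compact, $g$ is uniformly continuous on $\B$, so the hypothesis of Corollary~\ref{cor:localEB} holds with $\Gamma=\mathrm{id}$.

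Next, we obtain the LUEB. Definability of $H\in C^1$ and Proposition~\ref{prop:local-KL-definable} give a local KL desingularization function $\varphi_{\bar x}$ at every critical point. Points of $F=g^{-1}(0)\cap\B$ are exactly the critical points in $\B$. By Proposition~\ref{prop:KL-to-UEB}\ref{prop:KL-to-UEB-local}, Assumption~\ref{assmp:local-EB} holds with $\phi_{\bar x}=1/(\varphi'_{\bar x})^2$. Concavity of $\varphi_{\bar x}$ makes $\varphi'_{\bar x}$ positive and non-increasing, so $\phi_{\bar x}$ is non-decreasing and positive on $(0,\eta_{\bar x})$, as required. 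Moreover $1/\sqrt{\phi_{\bar x}}=\varphi_{\bar x}'$ integrates to $\varphi_{\bar x}(t)-\varphi_{\bar x}(0)<\infty$, so it is integrable near $0$. Definition~\ref{def:inverse-smoothing} then identifies $\Phi_{\bar x}=\varphi_{\bar x}$ and $\Psi_{\bar x}=\psi_{\bar x}$.

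For a critical point $\bar x$ that is not a local maximizer, Theorem~\ref{Th_KL_general} applies because $H$ is definable with locally Lipschitz gradient. It gives $\phi_{\bar x}(t)=O(t)$, hence $1/\phi_{\bar x}$ is not integrable near $0$. Corollary~\ref{cor:localEB} then yields almost sure convergence $x^k\to x^*\in F$, and the rates of Corollary~\ref{cor:localEB}\ref{cor:localEB-ii} hold on every path whose limit is not a local maximizer.

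If $x^*(\omega)$ is a local maximizer, we argue directly. Since $H(x^k)\downarrow H(x^*)$ and $x^k\to x^*$, eventually $x^k$ lies in the neighborhood where $H\le H(x^*)$. This forces $H(x^k)=H(x^*)$ from some index on, and \eqref{eq:basic_1} then makes the sequence constant, equal to $x^*$.

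I expect no real obstacle, since every step is a citation. The one point needing care is checking the side hypotheses of Corollary~\ref{cor:localEB}: the uniform continuity of $\Gamma\circ g$ on the compact $\B$, and the regularity of $\phi_{\bar x}$. The finite-termination claim also deserves care: it must be argued directly rather than through Corollary~\ref{cor:localEB}(i), because integrability of $1/\phi_{x^*}$ is not established at local maximizers.
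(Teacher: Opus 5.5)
Your proposal is correct and follows the paper's proof. Both verify Assumption~1 via Proposition~\ref{prop:smooth-random-bpg-basic}, obtain the LUEB from Propositions~\ref{prop:local-KL-definable} and~\ref{prop:KL-to-UEB}, invoke Corollary~\ref{cor:localEB}, use Theorem~\ref{Th_KL_general} at non-maximizers, and argue finite termination at local maximizers directly from sufficient descent. Your extra checks fill in details the paper leaves implicit: uniform continuity of $g$ on the compact level set with $\Gamma$ the identity, and monotonicity and positivity of $\phi_{\bar x}$.
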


\begin{proof}
    Proposition~\ref{prop:smooth-random-bpg-basic} verifies
    Assumption~\ref{assmp:basic} with the choices and constants stated
    above. The remainder of the proof follows exactly as in
    Theorem~\ref{thm:matrix-sketched-GD-local}, with $f$ replaced by
    $H$. Namely,
    Propositions~\ref{prop:local-KL-definable}
    and~\ref{prop:KL-to-UEB}\ref{prop:KL-to-UEB-local} yield the required
    LUEB at every critical point, and
    Corollary~\ref{cor:localEB} gives almost-sure convergence. If the
    limit is not a local maximizer, Theorem~\ref{Th_KL_general} gives
    the non-integrability of $1/\phi_{x^*}$ and hence the stated rates;
    if it is a local maximizer, sufficient descent implies finite
    termination, as in the proof of
    Theorem~\ref{thm:matrix-sketched-GD-local}.
\end{proof}

}

\subsection{Randomized Krasnosel'ski\u{\i}--Mann method}
\label{section:RKMM}

Finally, we analyze RKMM \eqref{alg:RKMM} for solving the common fixed point problem~\eqref{CF-P} using our theoretical framework. To begin, we recall the notion of firm quasi-nonexpansiveness~\cite[Definition~4.1]{bauschke2017convex}.

\begin{definition}[Firm quasi-nonexpansiveness]
Let $B\subseteq \mathbb{R}^d$ be a nonempty subset. Then, $T:B\rightarrow \mathbb{R}^d$ is said to be firmly quasi-nonexpansive if 
    \begin{equation*}
        \|Tx-y\|^2+\|Tx-x\|^2\leq\; \|x-y\|^2\quad \forall y\in \Fix T,\  x\in B. 
    \end{equation*}
\end{definition}

\revise{Assume that, in~\eqref{alg:RKMM}, the relaxation parameters satisfy
\begin{equation}\label{eq:RKMM-stepsize}
    0<\beta_k\leq\bar\beta<2
    \qquad
    \forall k\geq0,
\end{equation}
and that the sampling rule satisfies
\begin{equation}\label{eq:RKMM-sampling}
    \mathbb P(i_k=i\mid\cF_k)\geq\rho>0
    \qquad
    \forall i\in[m],\ k\geq0.
\end{equation}
For this application, we set
\[
    h(x):=\dist^2(x,F),
    \qquad
    g(x):=\max_{i\in[m]}\|T_i(x)-x\|^2.
\]
Let $x^0\notin F$, set $h_0:=h(x^0)$, and let
$p^0:=P_F(x^0)$. We work on the compact Fej\'er region
\begin{equation}\label{eq:RKMM-Fejer-region}
    \B
    :=
    \overline{\mathbb B}
    \bigl(p^0,\dist(x^0,F)\bigr).
\end{equation}

\begin{proposition}\label{prop:RKMM-basic}
    Suppose that $T_1,\dots,T_m$ are firmly quasi-nonexpansive and that
    \eqref{eq:RKMM-stepsize} and~\eqref{eq:RKMM-sampling} hold. Then
    RKMM~\eqref{alg:RKMM} satisfies Assumption~\ref{assmp:basic} on $\B$
    with stepsizes $\alpha_k=\beta_k$ and constants $\cona=2-\bar\beta$, $\conb=\rho$, and $\conc=1$.
    Moreover, $\{x^k\}_{k\geq0}$ is Fej\'er monotone with respect to $F$.
\end{proposition}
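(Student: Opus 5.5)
The plan is to derive everything from a single one-step identity for the relaxed update. Fix $k$ and write $i=i_k$, $T=T_i$, $x=x^k$, $\beta=\beta_k$, so that $x^{k+1}=x+\beta(Tx-x)$. For any $y\in F\subseteq\Fix T$, expanding gives
\[
\|x^{k+1}-y\|^2=\|x-y\|^2+2\beta\langle x-y,Tx-x\rangle+\beta^2\|Tx-x\|^2 .
\]
Firm quasi-nonexpansiveness rewrites as $\langle x-y,Tx-x\rangle\le -\|Tx-x\|^2$; this follows by writing $Tx-y=(x-y)+(Tx-x)$ and expanding the defining inequality. Substituting, we obtain
\[
\|x^{k+1}-y\|^2\le\|x-y\|^2-\beta(2-\beta)\|Tx-x\|^2 ,
\]
which already gives Fej\'er monotonicity of $\{x^k\}$ with respect to $F$.

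Next we verify the three inequalities of Assumption~\ref{assmp:basic}. Since $\|x^{k+1}-x^k\|^2=\beta^2\|Tx-x\|^2$, the last display with $y=P_F(x^k)$ and $\dist^2(x^{k+1},F)\le\|x^{k+1}-P_F(x^k)\|^2$ yields
\[
h(x^{k+1})\le h(x^k)-\frac{2-\beta_k}{\beta_k}\|x^{k+1}-x^k\|^2\le h(x^k)-\frac{2-\bar\beta}{\alpha_k}\|x^{k+1}-x^k\|^2,
\]
which is \eqref{eq:basic_1} with $\cona=2-\bar\beta$ and $\alpha_k=\beta_k$. For \eqref{eq:basic_4}, we use $\|x^{k+1}-x^k\|^2=\beta_k^2\|T_{i_k}x^k-x^k\|^2\le\alpha_k^2 g(x^k)$, so $\conc=1$. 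For \eqref{eq:expbound}, since $x^k$ is $\cF_k$-measurable, we condition on $\cF_k$ and apply \eqref{eq:RKMM-sampling}:
\[
\Exp[\|x^{k+1}-x^k\|^2\mid\cF_k]=\alpha_k^2\sum_{i}\mathbb P(i_k=i\mid\cF_k)\|T_ix^k-x^k\|^2\ge\rho\,\alpha_k^2\max_i\|T_ix^k-x^k\|^2 ,
\]
by keeping only the maximizing index. This gives $\conb=\rho\le1=\conc$. Here we need $i_k$ to be $\cF_{k+1}$-measurable, and we need the conditional law to be the regular version. We also note $h^*=0$ on $\B$, and that $g$ and $h$ must be continuous on $\B$. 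Continuity of $h$ is clear. Continuity of $g$ requires continuity of each $T_i$, which is implicitly assumed; otherwise the statement only uses $g$ as a measurable function, so I would note this.

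It remains to show that $\{x^k\}\subseteq\B$ and that $\B$ is closed; closedness is trivial. Taking $y=p^0\in F$ in the Fej\'er inequality, induction gives $\|x^k-p^0\|\le\|x^0-p^0\|=\dist(x^0,F)$, so every iterate lies in the closed ball \eqref{eq:RKMM-Fejer-region}. The main subtlety I expect is the need for $F$ to be closed and convex, or at least nonempty and closed with a chosen nearest point, so that $P_F$ and $\dist(\cdot,F)$ behave well. Fixed point sets of quasi-nonexpansive operators are closed and convex when the operators are continuous (\cite{bauschke2017convex}), and I would invoke this fact. If $P_F$ is multivalued, any selection works in the argument above.
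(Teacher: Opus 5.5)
Your proof is correct and follows the paper's route. You establish the one-step Fej\'er inequality $\|x^{k+1}-y\|^2\le\|x^k-y\|^2-\frac{2-\beta_k}{\beta_k}\|x^{k+1}-x^k\|^2$ for $y\in F$, which the paper cites from \cite[Proposition~4.3]{bauschke2017convex} and you derive directly; from it you get the descent of $h=\dist^2(\cdot,F)$ and then compute $\conb=\rho$ and $\conc=1$ directly. Your use of Fej\'er monotonicity at $y=p^0$ to get $\{x^k\}\subseteq\B$ is the precise justification of what the paper attributes to ``monotonicity of $h(x^k)$''; also, each $\Fix T_i$ is closed and convex for any quasi-nonexpansive $T_i$ without a continuity assumption, so that caveat in your last paragraph is unnecessary.
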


\begin{proof}
    By the RKMM update~\eqref{alg:RKMM} and
    \cite[Proposition~4.3]{bauschke2017convex}, for every $z\in F$,
    \[
        \|x^{k+1}-z\|^2
        \leq
        \|x^k-z\|^2
        -
        \frac{2-\beta_k}{\beta_k}
        \|x^{k+1}-x^k\|^2.
    \]
    This directly establishes the Fej\'er monotonicity of
    $\{x^k\}_{k\geq0}$ with respect to $F$. Taking the infimum over
    $z\in F$ further gives
    \[
        \dist^2(x^{k+1},F)
        \leq
        \dist^2(x^k,F)
        -
        \frac{2-\beta_k}{\beta_k}
        \|x^{k+1}-x^k\|^2.
    \]
    Since $0<\beta_k\leq\bar\beta<2$, this yields
    \eqref{eq:basic_1} with $\alpha_k=\beta_k$ and
    $\cona=2-\bar\beta$. The monotonicity of $h(x^k)$ also implies that
    $\{x^k\}_{k\geq0}\subset\B$. Moreover,
    \[
    \begin{aligned}
        \mathbb{E}\!\left[
            \|x^{k+1}-x^k\|^2
            \mid \mathcal{F}_k
        \right]
        &=
        \sum_{i\in[m]}
        \mathbb{P}(i_k=i\mid\mathcal{F}_k)\,
        \beta_k^2\|T_i(x^k)-x^k\|^2 \geq
        \rho\,\beta_k^2
        \max_{i\in[m]}\|T_i(x^k)-x^k\|^2,
    \end{aligned}
    \]
    which verifies~\eqref{eq:expbound} with
    $\alpha_k=\beta_k$ and $\conb=\rho$. Finally,
    \[
        \|x^{k+1}-x^k\|^2
        =
        \beta_k^2\|T_{i_k}(x^k)-x^k\|^2
        \leq
        \beta_k^2
        \max_{i\in[m]}\|T_i(x^k)-x^k\|^2,
    \]
    which verifies~\eqref{eq:basic_4} with
    $\alpha_k=\beta_k$ and $\conc=1$.
\end{proof}


If $T_1,\dots,T_m$ are continuous, since the Fej\'er region $\B$ is compact, $h$ and $g$ are continuous on $\B$. Moreover, $g^{-1}\{0\} = h^{-1}\{0\} = F$.
Hence, Theorem~\ref{thm:GUEB-existence} guarantees the existence of a GUEB modulus $\phi_{\B}$ such that Assumption~\ref{assmp:Global-EB} holds on
$\B$. Firm quasi-nonexpansiveness further imposes the following restriction on
any such modulus.

\begin{proposition}\label{prop:RKMM-nonintegrability}
    Under the assumptions of
    Propositions~\ref{prop:RKMM-basic}, any GUEB modulus $\phi_{\B}$ on $\B$ satisfies $\phi_{\B}(t)\leq t$ for all $t\in(0,h_0]$.
    Consequently, $1/\phi_{\B}$ is not integrable near zero.
\end{proposition}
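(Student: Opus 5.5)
The plan is to show that for every level $t\in(0,h_0]$ there is a point $x\in\B$ with $h(x)-h^*=t$ and $g(x)\le t$. The GUEB inequality $\phi_{\B}(h(x)-h^*)\le g(x)$ then gives $\phi_{\B}(t)\le t$ directly. Here $h^*=0$, since $F\neq\emptyset$ and the points of $F$ in $\B$ (for instance $p^0$) have $h=0$.

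The key estimate comes from firm quasi-nonexpansiveness. For any $x$ and $y=P_F(x)\in F\subseteq\Fix T_i$, the definition gives $\|T_ix-x\|^2\le\|x-y\|^2-\|T_ix-y\|^2\le\|x-y\|^2=\dist^2(x,F)$. Taking the maximum over $i$ yields
\[
g(x)\le h(x)\qquad\forall x\in\B.
\]
This is the crucial pointwise bound. The proposition does not assume $F$ is closed, but $P_F$ is already used in defining $p^0$, so I take $F$ closed and convex (standard for fixed point sets of quasi-nonexpansive maps with continuity). If needed, the argument can use an approximate projection $y\in F$ with $\|x-y\|^2\le \dist^2(x,F)+\varepsilon$, which gives $g(x)\le h(x)+\varepsilon$ and then $\varepsilon\downarrow 0$.

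Next I realize every level. Consider the segment $x_s:=p^0+s(x^0-p^0)$ for $s\in[0,1]$, which lies in $\B$ because $\|x_s-p^0\|=s\,\dist(x^0,F)$. Since $F$ is convex and $p^0=P_F(x^0)$, every point on this segment projects to $p^0$, so $h(x_s)=s^2 h_0$. As $s$ ranges over $[0,1]$ this covers every $t\in(0,h_0]$. Without convexity, continuity of $h$ along the segment (from $h(p^0)=0$ to $h(x^0)=h_0$) together with the intermediate value theorem still produces a point $x\in\B$ with $h(x)=t$. Combining with the pointwise bound gives $\phi_{\B}(t)\le g(x)\le h(x)=t$.

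Finally, $1/\phi_{\B}(t)\ge 1/t$ on $(0,h_0]$, and $1/t$ is not integrable near $0$. Hence $1/\phi_{\B}$ is not integrable near zero, and by Lemma~\ref{lem:functions}\ref{lem_func_1}, $\Psi_{\B}(0)=+\infty$.

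The main subtlety is the level-realization step together with the projection onto $F$. I expect this to be handled by the intermediate value argument along the segment, which needs only continuity of $h=\dist^2(\cdot,F)$, and that is automatic.
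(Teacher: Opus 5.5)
Your proposal is correct and follows the paper's proof essentially verbatim: you use the pointwise bound $g\le h$ from firm quasi-nonexpansiveness, take the point $x_t=p^0+\sqrt{t/h_0}\,(x^0-p^0)\in\B$ with $P_F(x_t)=p^0$ and $h(x_t)=t$, and conclude $\phi_{\B}(t)\le g(x_t)\le t$. Your fallbacks for a non-closed or non-convex $F$ are unnecessary, because each $\Fix T_i$ is automatically closed and convex: it equals $\bigcap_{x}\{y:\|T_ix-y\|^2\le\|x-y\|^2\}$, an intersection of closed half-spaces, so $F$ is closed and convex as well.
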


\begin{proof}
    Firm quasi-nonexpansiveness gives $g(x)\leq h(x)$ for any $x\in\B$.
    For any $t\in(0,h_0]$, let
    \[
        x_t
        :=
        p^0+
        \sqrt{\frac{t}{h_0}}\,(x^0-p^0).
    \]
    Since $F$ is closed and convex, $P_F(x_t)=p^0$, so
    $x_t\in\B$ and $h(x_t)=t$. By the GUEB inequality,
    \[
        \phi_{\B}(t)
        \leq
        g(x_t)
        \leq
        h(x_t)
        =
        t.
    \]
    Hence $1/\phi_{\B}(t)\geq1/t$ on $(0,h_0]$, and thus
    $1/\phi_{\B}$ is not integrable near zero.
\end{proof}


\begin{theorem}[Global convergence rates for RKMM]
\label{thm:RKMM-global}
    Suppose that the assumptions of
    Propositions~\ref{prop:RKMM-basic} and Assumption~\ref{assmp:step-size} hold. Then 
    \[
        \Exp[\dist^2(x^k,F)]
        \leq
        \mathcal \exp\left(
            -\mathfrak h\frac{\rho}{\bar\beta}A_k
        \right)h_0+
        \Psi_{\B}^{-1}\left(
            \Psi_{\B}(h_0)
            +
            \frac{(2-\bar\beta)\rho}{2}A_k
        \right).
    \]
    where $\mathfrak h:=\frac{1-\log 2}{2}$, and there exists an almost surely finite random index $K_1$ such that
    \[
        \dist^2(x^k,F)
        \leq
        \Psi_{\B}^{-1}\left(
            \frac{(2-\bar\beta)\rho}{2}A_k
        \right)
        \qquad
        \forall k\geq K_1
        \quad\as,
    \]
    Moreover, there exists a random variable $x^*\in F$ such that
    $x^k\to x^*$ almost surely and in $L^1$, and
    \begin{equation}\label{eq:RKMM-expected-iterate-rate}
        \Exp[\|x^k-x^*\|]
        \leq2\sqrt{\Exp[\dist^2(x^k,F)]}
        \qquad
        \forall k\geq0,
    \end{equation}
    and there exists an almost surely finite random index $K_2$ such that
    \begin{equation}\label{eq:RKMM-as-iterate-rate}
        \|x^k-x^*\|
        \leq
        2\sqrt{
            \Psi_{\B}^{-1}\left(
                \frac{(2-\bar\beta)\rho}{2}A_k
            \right)
        }
        \qquad
        \forall k\geq K_2
        \quad\as.
    \end{equation}
\end{theorem}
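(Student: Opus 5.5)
The first two estimates should follow from the abstract theory, and the iterate estimates from Fej\'er monotonicity rather than from the KL-type transfer (which would need integrability of $1/\sqrt{\phi_\B}$, unknown here). By Proposition~\ref{prop:RKMM-basic}, Assumption~\ref{assmp:basic} holds on the compact set $\B$ with $\alpha_k=\beta_k$, $\cona=2-\bar\beta$, $\conb=\rho$, $\conc=1$. Since the $T_i$ are continuous on $\B$, Theorem~\ref{thm:GUEB-existence} supplies a GUEB modulus $\phi_\B$, and we take $\tau$ slightly larger than $h_0$. Proposition~\ref{prop:RKMM-nonintegrability} shows $1/\phi_\B$ is not integrable near zero.

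Theorem~\ref{th_general_exp_rate1}\ref{Th_exp_1} then gives the expectation bound: $h^*=0$ and $\frac{c_2}{c_3\bar\alpha}=\rho/\bar\beta$, while $\frac{\cona\conb}{2}=\frac{(2-\bar\beta)\rho}{2}$. Part~\ref{Th_exp_2} gives the almost-sure bound on $\dist^2(x^k,F)$ with index $K_1$.

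For the iterates, I will use Fej\'er monotonicity. For any $p\in F$ and $j\ge k$, we have $\|x^j-p\|\le\|x^k-p\|$. Taking $p=P_F(x^k)$ gives $\|x^j-x^k\|\le 2\dist(x^k,F)$ for all $j\ge k$. The sequence $\dist(x^k,F)$ is non-increasing and converges to $0$ almost surely: this follows from the almost-sure bound, since $\Psi_\B^{-1}(\text{const}\cdot A_k)\to0$ because $A_k\to\infty$ and $\Psi_\B(0)=+\infty$. So $\{x^k\}$ is almost surely Cauchy and converges to some $x^*$. Closedness of $F$, which holds because $F=h^{-1}(0)$ with $h$ continuous, gives $x^*\in F$. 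Letting $j\to\infty$ yields the pathwise estimate
\[
\|x^k-x^*\|\le 2\dist(x^k,F).
\]
Combined with the almost-sure rate, this gives \eqref{eq:RKMM-as-iterate-rate} with $K_2=K_1$. Taking expectations and applying Jensen, $\Exp[\dist(x^k,F)]\le\sqrt{\Exp[\dist^2(x^k,F)]}$, gives \eqref{eq:RKMM-expected-iterate-rate}.

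For $L^1$ convergence, the iterates lie in the bounded set $\B$, so dominated convergence applies; the expectation bound also tends to zero. The measurability of $x^*$ is handled by defining it as the almost-sure limit and setting it to $p^0$ off the null set.

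The main point needing care is checking the hypotheses of Theorem~\ref{th_general_exp_rate1}: Assumption~\ref{assmp:Global-EB}\ref{ass_g_E_3} needs $h<\tau$ on $\B$, and $h\le h_0$ there. The existence theorem gives $\phi_\B$ on $\mathbb{R}_+$, so any $\tau>h_0$ works. Continuity of $g$ requires continuity of the $T_i$, which I take as implicit in this setting. Apart from this bookkeeping, the argument is direct.
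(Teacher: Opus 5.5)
Your proposal is correct and follows the paper's proof. The two $h$-estimates come from Theorem~\ref{th_general_exp_rate1} via Propositions~\ref{prop:RKMM-basic} and~\ref{prop:RKMM-nonintegrability}, and the iterate estimates come from the Fej\'er bound $\|x^k-x^*\|\le 2\dist(x^k,F)$, which you derive inline (taking $p=P_F(x^k)$) where the paper cites Bauschke--Borwein. One small quibble: $h^{-1}(0)=\overline{F}$, so continuity of $h$ cannot give closedness of $F$; closedness is part of the setting (already used in $p^0=P_F(x^0)$), or you can get $x^*\in F$ directly from $g(x^k)\le h(x^k)\to0$ and continuity of the $T_i$.
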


\begin{proof}
    Propositions~\ref{prop:RKMM-basic} and
    \ref{prop:RKMM-nonintegrability} verify the assumptions of
    Theorem~\ref{th_general_exp_rate1} in the non-integrable regime. It remains only to derive the iterate estimates. Since $\{\dist^2(x^k,F)\}_{k\geq0}$ is non-increasing and converges to zero
    almost surely, Fej\'er monotonicity and compactness imply that
    $x^k\to x^*$ for some $x^*\in F$. By \cite[Theorem 3.3(iv)]{BauschkeBorwein1993VonNeumann}, we have
    \begin{equation}\label{eq:Fejer-distance-transfer}
        \|x^k-x^*\|
        \leq
        2\dist(x^k,F)
        =
        2\sqrt{h(x^k)}.
    \end{equation}
    Taking expectations and applying Jensen's inequality proves
    \eqref{eq:RKMM-expected-iterate-rate}; combining
    \eqref{eq:Fejer-distance-transfer} with the almost-sure bound for $h$
    gives~\eqref{eq:RKMM-as-iterate-rate}. 
\end{proof}

For RKMM, if we assume Fej\'er monotonicity, a slightly sharper rate for the iterates can de derived. However, we should emphasize that this does not indicate looseness of our MRA analysis, since it does not assume Fej\'er monotonicity. Furthermore, the derivation still relies on the MRA framework to establish rates for $h(x^k)$, which are then transferred to the iterates using Fej\'er monotonicity.


\begin{example}[H\"olderian error bounds]
    Suppose that $T_1,\dots,T_m$ satisfy the H\"olderian error bound on
    $\B$ with exponent $\theta\in(0,1]$,
    \[
        \dist(x,F)
        \leq
        r\max_{i\in[m]}\|T_i(x)-x\|^\theta
        \qquad
        \forall x\in\B,
    \]
    for some $r>0$. Then one may take $\phi_{\B}(t)=(r^{-2}t)^{1/\theta}$, for which
    \[
        \Psi_{\B}(t)
        =
        \begin{cases}
            r^2\log(t_0/t), & \theta=1,\\[1mm]
            \displaystyle
            \frac{\theta r^{2/\theta}}{1-\theta}
            \left(
                t^{-\frac{1-\theta}{\theta}}
                -
                t_0^{-\frac{1-\theta}{\theta}}
            \right),
            & \theta\in(0,1).
        \end{cases}
    \]
    Consequently, when $\theta=1$, both the feasibility error and the
    iterates converge geometrically, in expectation and almost surely.
    When $\theta\in(0,1)$,
    \[
        \dist^2(x^k,F)
        =
        O\left(A_k^{-\frac{\theta}{1-\theta}}\right)
        \quad\text{and}\quad
        \|x^k-x^*\|
        =
        O\left(
            A_k^{-\frac{\theta}{2(1-\theta)}}
        \right),
    \]
    both in expectation and almost surely. The decay orders obtained above, both in expectation and almost surely, coincide with the corresponding deterministic rates for averaged and quasi-cyclic fixed-point iterations under H\"olderian error bounds \cite{borwein2017convergence,Liu2024ConcreteCR} and, for the linear specialization $\theta=1$ \cite{liang2016convergence}. Meanwhile, the rates for $\Exp[\dist^2(x^k,F)]$ also recover those obtained through the conditional-expectation/Jensen argument described in Section~5.2.1; see \cite[Theorem 3.12]{hu2022quasi}. When $\theta=1$, they specialize to the standard linear expected rates \cite{Nedic,Necoara2019,Strohmer2009Randomized,LeventhalLewis2010,GowerRichtarik2015,hermer2019random,hermer2019random}.
\end{example}
}

\section*{Declarations} 

\subsection*{Funding}
This work is supported in part by the Hong Kong Research Grants Council under the GRF project 17310124.

\subsection*{Competing interests}
The authors have no relevant financial or non-financial interests to disclose.


\appendix
\appendixpage

\section{\revise{Failure of Essential Cyclicity under I.I.D. Sampling}}\label{app:ess-cyclic}
\revise{
\begin{definition}[Essential cyclicity]
Let $\mathcal I=\{1,\ldots,m\}$ be a finite set, and let $(I_k)_{k\ge 0}$ be a sequence taking values in $\mathcal I$. We say that $(I_k)_{k\ge 0}$ is essentially cyclic if there exists an integer $T\ge 1$ such that, for every $k\ge 0$,
\[
\mathcal I \subseteq \{I_k,I_{k+1},\ldots,I_{k+T-1}\}.
\]
Equivalently, every element of $\mathcal I$ appears at least once in every window of $T$ consecutive selections.
\end{definition}

\begin{theorem}[I.i.d. sampling is not essentially cyclic]
Let $\mathcal I=\{1,\ldots,m\}$ with $m\ge 2$, and let $(I_k)_{k\ge 0}$ be an i.i.d. sequence taking values in $\mathcal I$. Then $(I_k)_{k\ge 0}$ is not essentially cyclic almost surely. More precisely,
\[
\mathbb P\left(
\exists\,T\ge 1 \text{ such that }
\mathcal I \subseteq \{I_k,I_{k+1},\ldots,I_{k+T-1}\}
\text{ for all } k\ge 0
\right)=0.
\]
In fact, for every fixed $T\ge 1$, almost surely there exist infinitely many windows of length $T$ that miss at least one element of $\mathcal I$.
\end{theorem}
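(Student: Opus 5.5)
The plan is to reduce the statement to a Borel--Cantelli argument over disjoint blocks of indices, for one fixed window length $T$ at a time, and then take a countable union over $T$. Write $p_i:=\mathbb P(I_0=i)$ for $i\in\mathcal I$. Because $m\ge 2$ and $\sum_i p_i=1$, there is at least one index $j\in\mathcal I$ with $p_j<1$. We fix such a $j$ once and for all. If some $p_j=0$, the same argument goes through verbatim, and in that case the conclusion is even trivial.

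Fix $T\ge 1$. For $n\ge 0$, consider the disjoint windows $W_n:=\{nT,nT+1,\ldots,(n+1)T-1\}$ and the events
\[
D_n:=\bigl\{I_k\neq j \text{ for all } k\in W_n\bigr\}.
\]
The windows are disjoint and the $I_k$ are i.i.d. Hence the events $\{D_n\}_{n\ge0}$ are independent, and $\mathbb P(D_n)=(1-p_j)^T=:q_T>0$. Therefore $\sum_n\mathbb P(D_n)=+\infty$. The second Borel--Cantelli lemma then gives $\mathbb P(D_n\text{ i.o.})=1$. On this event, infinitely many windows of length $T$ (namely the $W_n$ with $D_n$ occurring) miss the element $j$. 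This proves the last sentence of the theorem for the fixed $T$. In particular, the event
\[
C_T:=\bigl\{\mathcal I\subseteq\{I_k,\ldots,I_{k+T-1}\}\ \text{for all } k\ge0\bigr\}
\]
has probability zero, since $C_T\subseteq\{D_n\text{ i.o.}\}^c$.

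Finally, the event in the displayed probability is exactly $\bigcup_{T\ge1}C_T$. This is a countable union of null sets, so it is null, and essential cyclicity fails almost surely. As a side remark, the sets $C_T$ are increasing in $T$, so one could also argue by monotone limits, but the countable union is enough.

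I do not expect a real obstacle here. The only points needing care are two. First, the choice of an index $j$ with $p_j<1$, which is what uses $m\ge 2$. Second, the use of disjoint windows, so that the events $D_n$ are genuinely independent and the second Borel--Cantelli lemma applies; overlapping windows $\{k,\ldots,k+T-1\}$ for consecutive $k$ would not be independent.
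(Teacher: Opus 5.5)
Your proof is correct and follows essentially the same argument as the paper. Like the paper, you fix an index $j$ with $p_j<1$, use disjoint length-$T$ blocks to obtain independent events of probability $(1-p_j)^T>0$, apply the second Borel--Cantelli lemma, and conclude by a countable union (equivalently, intersection of complements) over $T\ge 1$.
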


\begin{proof}
Let $p_i=\mathbb P(I_0=i)$ for $i\in\mathcal I$. Since $m\ge 2$, there exists an element $i_0\in\mathcal I$ such that $p_{i_0}<1$.
Fix $T\ge 1$. For each $n\ge 0$, define the event
\[
E_n^{(T)}
:=
\{I_{nT}\neq i_0,\ I_{nT+1}\neq i_0,\ \ldots,\ I_{nT+T-1}\neq i_0\}.
\]
The events $(E_n^{(T)})_{n\ge 0}$ are independent, because they depend on disjoint groups of i.i.d. random variables. Moreover,
\[
\mathbb P(E_n^{(T)})=(1-p_{i_0})^T>0
\qquad \text{for every } n\ge 0.
\]
Hence
\[
\sum_{n=0}^{\infty}\mathbb P(E_n^{(T)})=\infty.
\]
By the second Borel--Cantelli lemma, $E_n^{(T)}$ occurs infinitely often almost surely. Therefore, almost surely, there are infinitely many windows of length $T$ in which the element $i_0$ does not appear.

Thus, for this fixed $T$, the sequence cannot satisfy the essential cyclicity condition with window length $T$. Since the set of possible values of $T$ is countable, taking the intersection over all $T\ge 1$ shows that, almost surely, no finite $T$ satisfies the essential cyclicity condition. This proves the claim.
\end{proof}
}

\bibliographystyle{abbrv}
\bibliography{refs}

\end{document}